\theoremstyle{plain}
\newtheorem{thm}{Theorem}
\newtheorem{lem}{Lemma}
\newtheorem{prop}{Proposition}
\theoremstyle{definition}
\newtheorem{eg}{Example}
\theoremstyle{definition}
\newtheorem*{defn*}{Definition}
\newtheorem*{eg*}{Example}
\newtheorem*{rem*}{Remark}
\DeclareMathOperator{\Tr}{Tr}
\begin{document}
	\title[Positivity properties of the vbMA equation]{Positivity properties of the vector bundle Monge--Amp\`ere equation}	
	\author{Aashirwad N. Ballal}
	\address{Department of Mathematics, Indian Institute of Science, Bangalore, India -- 560012}
	\email{aashirwadb@iisc.ac.in}
	\author{Vamsi P. Pingali}
	\address{Department of Mathematics, Indian Institute of Science, Bangalore, India -- 560012}
	\email{vamsipingali@iisc.ac.in}
	\maketitle
	\begin{abstract}
		We study MA-positivity, a notion of positivity relevant to a vector bundle version of the complex Monge--Amp\`ere equation introduced in an earlier work, and show that for rank-two holomorphic bundles over complex surfaces, MA-semi-positive solutions of the vector bundle Monge--Amp\`ere (vbMA) equation are also MA-positive. For vector bundles of rank-three and higher, over complex manifolds of dimension greater than one, we show that this positivity-preservation property need not hold for an algebraic solution of the vbMA equation treated as a purely algebraic equation at a given point. Finally, we set up a continuity path for certain classes of highly symmetric rank-two vector bundles over complex three-folds and prove a restricted version of positivity preservation which is nevertheless sufficient to prove openness along this continuity path.
	\end{abstract}
	
	\section{\textbf{Introduction}}\label{sec:Intro}
\indent 	A vector bundle version of the complex Monge--Amp\`ere (vbMA) equation and a new notion of positivity was introduced in \cite{Pin18}, as a possible way obtaining inequalities involving higher Chern classes on complex manifolds where this equation is satisfied. Let $ V $ be a holomorphic vector bundle over a compact complex $n$-manifold $ M $. If $ h $ is a smooth Hermitian metric on $ V $, there is the associated Chern connection $ \nabla_h $ (or $ D_h $) on $ V $ with curvature $ \Theta_h $. If a volume form $ \eta $ on $ M $ is fixed, the vector bundle Monge--Amp\`ere equation for the metric $ h $ on $ V $ can be written as
	\begin{equation}
		\label{vbMA}
		(i\Theta_h)^n = \eta \otimes \text{Id}.
	\end{equation}	
	
	Note that since $ (2 \pi)^n n!\text{ch}_n(V).M = \int_M \Tr(i \Theta_h)^n $ is independent of the metric $ h $ and depends only on the Chern character class $\text{ch}_n(V)$, we must have $ \text{rank}(V) \int_M \eta = (2 \pi)^n n!\text{ch}_n(V) $ in order for a solution to exist. For line bundles, this equation boils down to the usual complex Monge-Amp\`ere equation, for which existence was proven by Yau \cite{Yau78} in a K\"ahler class. For $n=1$, the equation is simply the Hermitian-Einstein equation on Riemann surfaces, for which a Kobayashi-Hitchin correspondence was proven by Atiyah and Bott \cite{AB83} using the Narasimhan and Seshadri theorem \cite{NS65}, and independently by Donaldson \cite{Don83}. Thus, it is expected that for Equation \ref{vbMA} to be solvable, a stability condition (MA-stability) must hold, and there must exist a background metric whose curvature satisfies a K\"ahler-like positivity condition (MA-positivity -- see Section \ref{sec:MApositivity} for the precise definition). Indeed, the latter requirement in the usual Monge-Amp\`ere equation is paramount to prove estimates as well as openness in any continuity path. In \cite{Pin18}, for certain rank-2 bundles on surfaces, MA-stability was proven to hold as a necessary condition, a Kobayashi-Hitchin correspondence was proven in the special case of vortex bundles of rank-2 on certain surfaces, and a perturbative existence result was proven for Mumford-stable bundles. Since then, the vbMA equation (and its variants) have been an active subject of research \cite{Dem21, DMS20, Gho24, Tak24, ZZ22, Cor23, KS24}.

\indent One can attempt to solve Equation \eqref{vbMA} using a method of continuity as follows: Assume that $V$ is Mumford-stable with respect to an ample line bundle $L$, and let $h_0$ be a Hermitian-Einstein metric with respect to a K\"ahler form $\Omega \in c_1(L)$ obtained by solving $\Omega^n=\eta$ using \cite{Yau78}. Now consider the following continuity path
\begin{gather}
(i\Theta_{h_t}+t\Omega \otimes Id)^n = c_t \Omega^n \otimes\text{Id},
\label{contpath}
\end{gather}
where $c_t= \int_M \text{Tr} (i \Theta_0 + t \Omega \otimes \text{Id})^{n} / (\text{rank}(V) \int_M \Omega^{n}) $ is a polynomial of degree $ n$ in $ t $. That is, we consider the $\mathbb{R}$-vector bundle $V\otimes L^{t/2 \pi}$ and solve the vbMA equation for it with a normalised right-hand-side. It was proven in \cite{Pin18} (and the proof is sketched in Section \ref{sec:threefolds}) that for large $t>>1$, there exists a smooth solution to \eqref{contpath} (which is trivially MA-positive). If we prove that the set of $t\in [0,\infty)$ for which there is a smooth solution is open and closed, we will be done. To this end, it is paramount that MA-positivity be preserved along this continuity path. At the first time this condition fails, the solution will be MA-semipositive. This phenomenon will occur for any continuity path. Thus we are naturally led to the following conjecture.\\

\textbf{Conjecture}: \emph{Any MA-semipositively curved solution of the vbMA equation \ref{vbMA} is MA-positively curved.}\\

\indent In this paper (Theorem \ref{mainthm}) we prove that for rank-2 bundles on surfaces this conjecture holds. Surprisingly enough, we exhibit a counterexample to the pointwise, purely linear-algebraic version of this conjecture for rank-3 (and higher rank) bundles on complex surfaces (and in higher dimensions). This counterexample uses the construction of a vortex bundle \cite{Gar93,Gar94}. We then proceed to study a rank-2 vortex bundle on certain threefolds to formulate a dimensional reduction of the vbMA equation (Equation \ref{vbMA3d}), and set up a continuity method for it. To prove openness, we need a restricted version of the conjecture mentioned above, and we are able to prove the same. The proofs are complicated and use the Schur complement of a block matrix extensively.

\indent Our counterexample indicates that either the vbMA equation is the ``wrong" equation to study for higher ranks, or that simple linear algebra is not sufficient and delicate analytic estimates (see \cite{Man23} for a related phenomenon for the Demailly system) are necessary to prove that MA-positivity is preserved. This observation has wider ramifications. Indeed, motivated by mirror symmetry considerations, the deformed Hermitian-Yang-Mills equation was introduced by \cite{MMMS00,LYZ00,CJY20} for line bundles. It is conjecturally mirror to a special Lagrangian section (see \cite{CXY18} for an overview). One can wonder whether a vector bundle version of it can be mirror to a special Lagrangian multi-section. A naive generalisation of the same \cite{CY18} is equivalent to the vbMA equation in the case of surfaces by completing the square \cite{JY17, KS24}. Our results show that perhaps the naive generalisation is extremely subtle at best, or too naive at worst. On the positive side, the preservation of MA-positivity for rank-2 bundles on surfaces indicates that a metric version of a theorem of Schneider and Tancredi \cite{ST85} can be approached using the continuity path in Equation \eqref{contpath}. If such a result is proven, it will provide strong evidence for the Griffiths conjecture relating ampleness of vector bundles and Griffiths-positively curved metrics \cite{Gri69}.

\emph{Acknowledgements}: This work is partially supported by grant F.510/25/CAS-II/2018(SAP-I) from UGC (Govt. of India) and a scholarship from the Indian Institute of Science. The authors thank Ved Datar for his support and encouragement.

	\section{\textbf{The vbMA equation and MA-positivity}}\label{sec:MApositivity}
	
 In this section, we discuss the notion of MA-positivity and also state our main theorem. As mentioned in Section \ref{sec:Intro}, to produce solutions to equations such as Equation \eqref{vbMA}, one might consider using a continuity method along a path of metrics $ t \mapsto h_t $ satisfying equations similar to the vbMA equation. In such cases, questions about the mapping properties of the linearisation the vbMA equation naturally arise whilst studying the openness aspect of the continuity method. With this observation in mind, we recall the following MA-positivity condition defined in \cite{Pin18}:
	
	\begin{defn*}[MA-positivity]
		Given a Hermitian metric $ h $ on $ V $ and a Hermitian section $ M $ of $ \Omega^{1, 1}(\text{End}(V)) $ is MA-positive at a point $ p $ if
		$$\sum_{k = 0}^{n-1} \Tr(ia \wedge (M)^k \wedge a^{*_h} \wedge (M)^{n - 1 - k} ) > 0 $$at $ p $ for all $ 0 \neq a \in \Omega^{1, 0} (\text{End(V)}) $. If this condition holds at all points, we say that $ M $ is MA-positive.
	\end{defn*}

	\begin{rem*}If $ h_0 $ is some Hermitian metric on $ V $, then any other Hermitian metric is of the form $ h = h_0 e^g  $, where $ g \in \text{End}(V, h_0) $, i.e. g is a $ h_0 $-Hermitian section of $ \text{End}(V) $. Also, if $ \Theta_h $ is the curvature of the Chern connection $ \nabla_h $, then it can be shown that $ \Theta_h = \Theta_{h_0} + \nabla^{0, 1}(e^{-g} \nabla^{1, 0}_{h_0} e^{g}) $, where we have omitted the subscript in $ \nabla^{0, 1} $ since $ \nabla^{0, 1}_h = \nabla^{0, 1}_{h_0} $.
	\end{rem*}

	To see how MA-positivity can be applied in the context of the vbMA equation, consider the linearization of $ (i \Theta_{h_0})^n $ at $ h_0 $. By the above remark, this linearization is
	\begin{align*}
		L_{h_0}: \text{End}(V, h_0) \rightarrow \Omega^{n, n} (\text{End(V)}) \\
		g \mapsto \sum_{k = 0}^{n-1} (i\Theta_{h_0})^k \wedge i \nabla^{0, 1} \nabla_{h_0}^{1, 0} g  \wedge (i \Theta_{h_0})^{n - 1 - k}.
	\end{align*}
	Using the pairing $\Omega^{n, n} (\text{End(V)}) \times \text{End(V)} \rightarrow \Omega^{n , n}(M) $ given by $ (X, Y) \mapsto \Tr(XY^{*_{h_0}}) $ and choosing a volume form, we can identify $ \Omega^{n, n} (\text{End(V)}) $ with $ \text{End(V)} $ and also define an inner product on $ \text{End(V)} $. Doing this, we get $ L_{h_0}: \text{End}(V, h_0) \rightarrow \text{End}(V) $.  To usefully apply Fredholm operator theory to $ L_{h_0} $, some information about $ \text{ker}(L_{h_0}) $ and $ \text{ker}(L^*_{h_0}) $ is necessary. In general, the domain of $ L^* $ is not the same as the domain of $ L $, but when $ \Theta_{h_0} $ satisfies the vbMA equation, we have the following proposition.
	
	\begin{prop}
		If $ \Theta_{h_0} $ satisfies the vbMA equation, then $ \forall g \in \text{End}(V, h_0) $, $ L_{h_0} g $ is also in $ \text{End}(V, h_0) $, so $ L_{h_0} $ can be considered as an operator on $ \text{End}(V, h_0) $, which is also formally self-adjoint.
	\end{prop}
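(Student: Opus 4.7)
The plan is to prove both assertions by direct manipulation, with the vbMA hypothesis $(i\Theta_{h_0})^n=\eta\otimes\text{Id}$ entering only at the very end of each part to kill a commutator defect. Throughout, abbreviate $S=i\Theta_{h_0}$ and, for any Hermitian $g$, $T_g=i\nabla^{0,1}\nabla^{1,0}_{h_0}g$, so that $L_{h_0}(g)=\sum_{k=0}^{n-1}S^{k}\,T_g\,S^{n-1-k}$; this $(n,n)$-form is identified with an element of $\text{End}(V)$ via the chosen volume form, as in the excerpt.

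For Hermitianness of $L_{h_0}(g)$, the first step is to compute $T_g^{*_{h_0}}$. Anti-Hermiticity of the Chern curvature gives $S^{*_{h_0}}=S$, and $h_0$-compatibility of $\nabla_{h_0}$ together with the Hermitian condition on $g$ yields $(\nabla^{1,0}_{h_0}g)^{*_{h_0}}=\nabla^{0,1}g$. The curvature identity $\nabla^{0,1}\nabla^{1,0}_{h_0}+\nabla^{1,0}_{h_0}\nabla^{0,1}=[\Theta_{h_0},\,\cdot\,]$ on $\text{End}(V)$ then gives $T_g^{*_{h_0}}=T_g-[S,g]$. Taking adjoints term-by-term in $L_{h_0}(g)$ (noting that $S^{k}$ and $T_g$ both have even form-degree, so there are no graded signs in reversing products) and reindexing $k\mapsto n-1-k$, a short telescoping calculation should produce
\[L_{h_0}(g)^{*_{h_0}}-L_{h_0}(g)=-[S^{n},g].\]
Now apply vbMA: $S^{n}=\eta\otimes\text{Id}$ is central in $\text{End}(V)$, so $[S^{n},g]=0$ and $L_{h_0}(g)$ is itself $h_0$-Hermitian.

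For formal self-adjointness, I will show $\int_{M}\Tr(L_{h_0}(g_1)\,g_2)=\int_{M}\Tr(g_1\,L_{h_0}(g_2))$ for Hermitian $g_1,g_2$; combined with the Hermitianness of each $L_{h_0}(g_i)$ established above, this is the required identity $\langle L_{h_0}g_1,g_2\rangle=\langle g_1,L_{h_0}g_2\rangle$. The computation is a two-step integration by parts, using the Bianchi identities $\nabla^{1,0}S=\nabla^{0,1}S=0$: first move $\nabla^{0,1}$ off $g_1$ (Stokes on an $(n,n-1)$-form), then move $\nabla^{1,0}$ off $g_1$ (Stokes on an $(n-1,n)$-form), arriving at $-\int_{M}\Tr\bigl(g_1\cdot S^{n-1-k}(i\nabla^{1,0}\nabla^{0,1}g_2)\,S^{k}\bigr)$. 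Swapping $\nabla^{1,0}\nabla^{0,1}$ for $\nabla^{0,1}\nabla^{1,0}$ via the same curvature identity produces the desired main term $\int_{M}\Tr(g_1\cdot S^{n-1-k}T_{g_2}S^{k})$ plus an error proportional to $\int_{M}\Tr(g_1\cdot S^{n-1-k}[S,g_2]\,S^{k})$. Summing over $k$, the main terms reassemble (after reindexing $k\mapsto n-1-k$) into $\int_{M}\Tr(g_1\,L_{h_0}(g_2))$, while the error telescopes to $\int_{M}\Tr(g_1\,[S^{n},g_2])$, which again vanishes by vbMA.

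The principal obstacle is sign-bookkeeping in the graded Leibniz rule and in the cyclic property of the trace when shuffling $\text{End}(V)$-valued forms of mixed degree; since $S^{k}$ always has even form-degree and $g_i$ form-degree zero, most of these signs are trivial, but the intermediate odd-degree forms $\nabla^{1,0}g_1$ and $\nabla^{0,1}g_2$ demand some care. Conceptually, both defects---pointwise in the first part, integrated in the second---take the form of a commutator with $S^{n}$, and this makes transparent why vbMA is precisely the hypothesis under which $L_{h_0}$ becomes a formally self-adjoint operator on $\text{End}(V,h_0)$.
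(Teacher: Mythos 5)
Your proposal is correct and follows essentially the same route as the paper: the Hermitianness defect is the commutator $[(i\Theta_{h_0})^n,g]$, produced via the Ricci identity $\nabla^{0,1}\nabla^{1,0}_{h_0}+\nabla^{1,0}_{h_0}\nabla^{0,1}=[\Theta_{h_0},\cdot\,]$ and killed by the vbMA equation, exactly as in the paper's computation (you merely package it as $T_g^{*_{h_0}}=T_g-[S,g]$ and telescope, rather than summing the commutators directly). Your integration-by-parts argument for formal self-adjointness correctly fills in the step the paper only sketches with the phrase ``integrating-by-parts and using the Bianchi identity.''
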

\begin{proof}
	We have $ (L_{h_0}(g))^{*_{h_0}} $ = -$\sum_{k = 0}^{n-1} (i\Theta_{h_0})^k \wedge i \nabla^{1, 0}_{h_0} \nabla^{0, 1} g  \wedge (i \Theta_{h_0})^{n - 1 - k}$ and so
	\begin{multline*}
	L_{h_0}(g) - (L_{h_0}(g))^{*_{h_0}} = i \sum_{k = 0}^{n-1} (i\Theta_{h_0})^k \wedge  (\nabla^{0, 1} \nabla^{1, 0}_{h_0} + \nabla^{1, 0}_{h_0} \nabla^{0, 1}) g  \wedge (i \Theta_{h_0})^{n - 1 - k}  \\
	= i \sum_{k = 0}^{n-1} (i\Theta_{h_0})^k \wedge  (\nabla_{h_0} \wedge \nabla_{h_0}) g  \wedge (i \Theta_{h_0})^{n - 1 - k} \\
	= i \sum_{k = 0}^{n-1} (i\Theta_{h_0})^k \wedge [\Theta_{h_0}, g]  \wedge (i \Theta_{h_0})^{n - 1 - k}
	= [(i\Theta_{h_0})^n, g] = \eta \cdot [\text{Id}, g] = 0.
	\end{multline*}The formal self-adjointness follows from integrating-by-parts and using the Bianchi identity $ \nabla_{h_0} \Theta_{h_0} = 0 $.
\end{proof}
	The ellipticity of $ L $ follows from MA-positivity by taking the endomorphism-valued (1, 0)-forms in the definition of MA-positivity to be of the form $ \xi \wedge g $, where $ \xi $ is a $ (1, 0) $-form and $ g $ is an endomorphism. Hence, by the theory of self-adjoint elliptic operators, $ L_{h_0} $ is an isomorphism on the orthogonal complement of $ \text{ker}(L_{h_0}) $. Further, as in \cite{Pin18}[Lemma 2.3], by an integration-by-parts, MA-positivity of $ i\Theta_{h_0} $ implies that $ g \in \text{ker}(L_{h_0}) \iff \nabla_{h_0} g = 0 $. Indeed, for any $ g \in \text{End}(V, h_0)  $, $ \displaystyle \int_M \Tr (L_{h_0}(g) g) = - \int_M \sum_{k = 0}^{n-1} \Tr(i \nabla_{h_0}^{0, 1} g \wedge (i \Theta_{h_0})^k \wedge \nabla_{h_0}^{1, 0} g \wedge (i \Theta_{h_0})^{n - 1 - k} ) $, which is $ 0 $ iff $ \nabla_{h_0} g  = 0$. In the same Lemma, it is also noted that when $ V $ is indecomposable as a holomorphic bundle, this kernel consists of constant multiples of the identity so that $ L_{h_0} $ is an isomorphism on the subspace of $ \text{End}(V, h_0) $ consisting of endomorphisms whose integrated trace is zero. This is usually sufficient to prove openness along the relevant continuity paths.
	\begin{rem*}Note that although we have required positivity for all non-zero $ (1, 0) $-valued endomorphisms in the definition of MA-positivity, what we have really used in the above discussion is MA-positivity for  $ (1, 0) $-valued endomorphisms of the type $ \nabla_{h_0} g $. In general, this is no restriction at all on $ \Omega^{1, 0}(\text{End(V)}) $, but when a vector bundle has some symmetries, such as the vortex bundles we consider later, we can restrict the $ (1, 0) $-valued endomorphisms used in the definition of MA-positivity to a proper subspace of $ \Omega^{1, 0}(\text{End(V)}) $ containing all endomorphisms of the type $ \nabla_{h_0} g $ for the relevant $ g $. In those cases, we refer to the corresponding (semi)-positivity condition as restricted-MA-(semi)-positivity.
	\end{rem*}
	It is also important for the method of continuity that MA-positivity be preserved in taking limits, in the sense that if $ \Theta $ is a solution of a vbMA-like equation obtained by taking a limit of MA-positive solutions of vbMA-like equations, then $ \Theta $ must also be MA-positive. If the limit is taken in $ C^{2} $, then continuity necessitates that $ \Theta $ is MA-semi-positive, in the sense that $ \Theta $ satisfies MA-positivity with the strict inequality replaced by a non-strict inequality. So it is important in proving openness of the continuity path to know if a solution of the vbMA equation which is MA-semi-positive is also MA-positive. We now consider the local version of this problem, which, if solved, would also solve the global version. \\
		
	Let $ \Lambda^{p, q}(\mathbb{C}^n, \text{End}(\mathbb{C}^r)) $ be the set of $ \text{End}(\mathbb{C}^r)  $-valued $ (p, q) $-forms on $ \mathbb{C}^n $ and let $ \eta $ be a volume form on $ \mathbb{C}^n  $.
	\begin{defn*}[algebraic vbMA solutions, MA-positivity, MA-semi-positivity] $$\text{(1) The set of all solutions to the algebraic vbMA equations: }  $$ $$ V(n, r, \eta ) := \{ \Theta \in \Lambda^{1, 1}(\mathbb{C}^n, \text{End}(\mathbb{C}^r)): i\Theta = (i\Theta)^{*}, (i\Theta)^n = \eta \cdot \text{Id}  \} $$  
	$$\text{(2) The set of MA-positive forms: } $$ $$ P(n, r) := \{ \Theta \in \Lambda^{1, 1}(\mathbb{C}^n, \text{End}(\mathbb{C}^r)):  \forall a \in \Lambda^{1, 0}(\mathbb{C}^n, \text{End}(\mathbb{C}^r)), a \neq 0,  $$ $$ \sum_{k = 0}^{n-1} \Tr(ia \wedge (i \Theta)^k \wedge a^{*} \wedge (i\Theta)^{n - 1 - k} ) > 0 \}  $$
	$$\text{(3) The set of MA-semi-positive forms: }  $$ $$ S(n, r) := \{ \Theta \in \Lambda^{1, 1}(\mathbb{C}^n, \text{End}(\mathbb{C}^r)): \forall a \in \Lambda^{1, 0}(\mathbb{C}^n, \text{End}(\mathbb{C}^r)),  $$ $$ \sum_{k = 0}^{n-1} \Tr(ia \wedge (i \Theta)^k \wedge a^{*} \wedge (i\Theta)^{n - 1 -  k} ) \geq 0 \}   $$
\end{defn*}
\begin{rem*}
	As mentioned in the previous remark, in some cases, it is also useful to consider MA-positivity on proper subspaces $ W $ of $ \Lambda^{1, 0}(\mathbb{C}^n, \text{End}(\mathbb{C}^r)) $ and in those cases, we deal with proper subsets $ P_W(n, r) $ and $ S_W(n, r) $ of $ P(n, r) $ and $ S(n, r) $.
\end{rem*}

The question of MA-positivity preservation is whether $ V(n, r, \eta ) \cap S(n, r) \subseteq P(n, r) $ for all $ n, r, \eta $. If either $ n = 1 $ or $ r = 1 $ (the case relevant for the ordinary complex Monge--Amp\`ere equation), this inclusion holds. As mentioned in the introduction, we prove in this paper that it also holds when $ n = r = 2 $ and that it fails to hold if $ n \geq 2 $ and $ r > 2 $, so unqualified MA-positivity preservation does not, in fact, hold. However, it could still be the case that for the proper subset of $ V(n, r, \eta) $ consisting of those endomorphism-valued $ (1, 1) $-forms which arise as genuine solutions of vbMA equations, MA-semi-positivity does indeed imply MA-positivity. The final result of the paper shows that for any solution of the vbMA equation on certain rank-$2$ vortex bundles over complex three-folds, restricted-MA-semipositivity does imply restricted-MA-positivity. In summary, we have the following theorem:

\begin{thm}
	\label{mainthm}
	~\\
	(1) MA-positivity is preserved on rank-2 bundles over complex surfaces. In fact, $V(2, 2, \eta ) \cap S(2, 2) \subseteq P(2, 2)$. \\
	(2) For $ n \geq 2 $ and $ r > 2 $, it is not true that $V(n, r, \eta ) \cap S(n, r) \subseteq P(n, r)$ in general. \\
	(3) Restricted-MA-positivity is preserved on a class of rank-2 vortex bundles over complex three-folds.
\end{thm}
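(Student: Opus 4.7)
All three parts are pointwise statements about a Hermitian bilinear form on a finite-dimensional space, subject to the polynomial constraint $(i\Theta)^n = \eta \cdot \text{Id}$. My overall strategy is to work in a holomorphic frame, write $i\Theta = H_{\alpha\bar\beta}\,i\,dz^\alpha \wedge d\bar z^\beta$ with $H_{\beta\bar\alpha} = H_{\alpha\bar\beta}^*$, translate both the MA-bilinear form and the equation $(i\Theta)^n = \eta \cdot \text{Id}$ into explicit polynomial expressions in the blocks $H_{\alpha\bar\beta}$, and then apply the Schur complement to separate the MA-quadratic form into strictly positive diagonal pieces and residual pieces whose positivity is forced by the algebraic constraint.

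\emph{Part (1).} A direct expansion should show that the MA-bilinear form for $n = r = 2$ is a real Hermitian form on $\text{End}(\mathbb{C}^2) \oplus \text{End}(\mathbb{C}^2)$ built from Jordan-type products $\{A_\alpha, A_\beta^*\}$ of the test data, weighted by the blocks $H_{\alpha\bar\beta}$, while $(i\Theta)^2 = \eta \cdot \text{Id}$ reduces to the single $2 \times 2$ matrix identity
\[
\{H_{1\bar 1},H_{2\bar 2}\} - \{H_{1\bar 2},H_{2\bar 1}\} = c\,\text{Id},
\]
with $c > 0$ a positive constant (the ratio of $\eta$ to the Euclidean volume form). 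Assuming MA-semi-positivity fails to be strict at some nonzero $a$, I would extract from the vanishing of the MA-form a system of matrix equations of the form $H_{\alpha\bar\beta} X = 0$ for suitable $X$ built from the components of $a$, and feed these into the anticommutator identity to derive a contradiction. The rank-$2$ Cayley--Hamilton relation, which reduces second-order polynomials in each $H_{\alpha\bar\beta}$ to linear ones, should be the miracle that forces $c = 0$ once the null-vector data is substituted. This is the step I expect to be the most delicate.

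\emph{Part (2).} The extra freedom present in $\text{End}(\mathbb{C}^r)$ when $r > 2$ is expected to break the rigidity of part (1). My plan is to write down an explicit fibre-model of a rank-$3$ vortex bundle $V = L_1 \oplus L_2 \oplus L_3$ with a nontrivial Higgs field contributing to the off-diagonal blocks of $i\Theta$, parametrise $i\Theta$ by a small number of real numbers, impose $(i\Theta)^n = \eta \cdot \text{Id}$ as a polynomial system in those parameters, and solve for a choice at which the MA-block matrix is positive semi-definite but has a nonzero kernel supported on the Higgs-generated direction. Once the $r = 3$, $n = 2$ example is in hand, higher $n$ and $r$ should follow by direct-summing with a suitable MA-positive factor and by tensoring with trivial forms in extra coordinates.

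\emph{Part (3).} The symmetry of the vortex bundle cuts the endomorphism-valued $(1,0)$-forms appearing in MA-positivity down to a proper equivariant subspace $W \subset \Lambda^{1,0}(\mathbb{C}^3,\text{End}(\mathbb{C}^2))$ containing all $\nabla_{h_0} g$ for admissible $g$, and reduces the vbMA equation to an explicit dimensionally-reduced system. I would block-decompose the MA-form restricted to $W$ using the scalar-plus-Higgs splitting of $i\Theta$, identify a diagonal block whose strict positivity is forced directly by the positivity of the scalar part of the reduced equation, peel it off by a Schur complement, and iterate on the remaining smaller Hermitian matrix. The main obstacle will be the coupling between base and fibre directions mediated by the Higgs field, which survives the restriction to $W$ and introduces off-diagonal blocks that must be controlled by subtler identities coming from the reduced equation; this is where I expect the repeated use of Schur complements (flagged in the introduction) to become essential.
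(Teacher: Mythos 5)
Your plan matches the paper's general toolbox (fibrewise reduction, block matrices, Schur complements, a vortex-bundle counterexample), but in each part the step that actually carries the proof is missing or replaced by a hope. For part (1), the paper does not argue by contradiction from a null vector: after using the equation and semi-positivity to force $A = H_{1\bar 1} > 0$ and $C = H_{2\bar 2} > 0$, it takes the Schur complement with respect to the positive operator $Y \mapsto \{C, Y\}$ and reduces everything to a \emph{universal} inequality, valid for all $2\times 2$ matrices $B, X$ and all eigenvalue ratios $\lambda > 0$ of $C$, namely $\langle B\cdot B^{\dag}, X\cdot X^{\dag}\rangle_T \geq \langle B\cdot X^{\dag}, B\cdot X^{\dag}\rangle_T$ where $T = \{C,\cdot\}^{-1}$. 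Proving this requires expanding a quadratic in $\lambda$ whose coefficients are combinations of five $2\times 2$ determinants $B_{ij}X_{kl}-B_{kl}X_{ij}$ and closing the estimate with Cauchy--Schwarz. Your ``Cayley--Hamilton forces $c=0$'' step has no analogue of this computation behind it, and it is precisely the hard part; the strict inequality in the conclusion comes from the strict gap $A > T(\{B,B^{\dag}\})$ supplied by the equation, not from a kernel contradiction.

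For part (2), the right-shaped construction is a rank-$(n+1)$ vortex bundle with a rank-$n$ block and a rank-$1$ block (not three line bundles), and the counterexample lives exactly on the level set $T=0$, $-iD^{0,1}T^*\wedge D^{1,0}T = \eta(2r+1)$ identified by a Schur-complement analysis of an $(n^2+n+1)\times(n^2+n+1)$ matrix; an explicit instance is then verified by completing squares. Your extension to higher dimension by ``tensoring with trivial forms in extra coordinates'' is the paper's idea, but it is not automatic: one must expand $\sum_\mu (i\Phi)^\mu \wedge i\mathcal{A}\wedge (i\Phi)^{m+1-\mu}\wedge \mathcal{A}^*$ with a combinatorial identity and separately verify semi-positivity of the mixed term $\Tr((i\Theta)^2\mathcal{D}\mathcal{D}^* + \mathcal{D}(i\Theta)^2\mathcal{D}^* + i\Theta\mathcal{D}\wedge i\Theta\mathcal{D}^*)$, which couples the new directions to the old curvature and requires another explicit completion of squares. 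Likewise ``direct-summing with an MA-positive factor'' to raise the rank is not needed (the example already has arbitrary rank $\geq 3$) and is not obviously sound, since the MA-form of a direct sum contains off-diagonal test endomorphisms mixing the summands. For part (3), your Schur-complement outline is the correct skeleton, but the substance of the paper's proof is elsewhere: semi-positivity reduces the problem to $\det(\Delta M - ww^*) > 0$ for an explicit $4\times 4$ matrix; this determinant is rewritten as $c_1 g_1 + c_2 g_2$ (up to positive factors), where $c_1, c_2$ are positive by the dimensionally reduced vbMA equation, and the positivity of $g_1, g_2$ on the constraint polygon in the $(|\ell_1|^2, |\ell_2|^2)$-plane is established by superharmonicity and the strong maximum principle, checking values on the boundary lines. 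None of this -- nor the preliminary global maximum-principle lemmas $|T|_h\leq 1$, $a,b>0$, $3a\Theta_1+G>0$ -- is visible in your plan, so as it stands the proposal does not yet constitute a proof of any of the three parts.
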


	\section{\textbf{MA-positivity for rank-2 bundles on surfaces}}\label{sec:rank2surfaces}
	Our aim in this section is to show that an MA-positive-semidefinite solution of the vbMA equation on a rank-2 bundle over a complex surface is, in fact, MA-positive-definite. As shown in the previous section, it suffices to consider the local version of this problem. \\
	\indent We may write the curvature as $i\Theta = A idz^1\wedge d\bar{z}^1 + C idz^2 \wedge d\bar{z}^2 + B idz^1\wedge d\bar{z}^2 + B^{\dag} idz^2 \wedge d\bar{z}^1$ where $A, B, C$ are $2\times 2$ complex matrices (with $A=A^{\dag}$, $C=C^{\dag}$). Now suppose $a^{\dag}$ is an $2\times 2$ matrix of $(1,0)$ forms given by $a^{\dag}=\alpha dz^1 + \beta dz^2$ where $\alpha, \beta$ are $2\times 2$ matrices of complex numbers. Then we see that
	\begin{gather}
	\frac{\mathrm{tr} \left (ia^{\dag} \left ( i\Theta \right ) a \right )+\mathrm{tr} \left (ia^{\dag}  a  \left ( i\Theta \right )\right )}{idz^1d\bar{z}^1 idz^2 d\bar{z}^2} = \mathrm{tr}(\alpha C \alpha^{\dag})+\mathrm{tr}(\alpha \alpha^{\dag}C)+\mathrm{tr}(\beta A \beta^{\dag})+\mathrm{tr}(\beta  \beta^{\dag} A) \nonumber \\
	-\mathrm{tr}(\alpha B^{\dag} \beta^{\dag})-\mathrm{tr}(\alpha  \beta^{\dag} B^{\dag}) -\mathrm{tr}(\beta B \alpha^{\dag}) - \mathrm{tr}(\beta  \alpha^{\dag} B).
	\label{MApositivityequationintermsofmatrices}
	\end{gather}
	On the other hand,
	\begin{gather}
	(i\Theta)^2 = idz^1 d\bar{z}^1 dz^2 d\bar{z}^2 (AC+CA-BB^{\dag}-B^{\dag}B)>0.
	\label{iThetasquaredintermsofmatrices}
	\end{gather}
	Note that $\{B,B^{\dag}\}=BB^{\dag}+B^{\dag}B$ is positive-semidefinite. Thus, $\{A,C\}$ is positive-definite. Equation \eqref{MApositivityequationintermsofmatrices} shows that MA-positive-semidefiniteness implies the positive-semidefiniteness of $A,C$. Since $\{A,C\}$ is positive-definite we see that $A,C$ are positive-definite. Note that the right-hand-side of Equation \eqref{MApositivityequationintermsofmatrices} can be written as follows.
	\begin{gather}
	\frac{\mathrm{tr} \left (ia^{\dag} \left ( i\Theta \right ) a \right )+\mathrm{tr} \left (ia^{\dag}  a  \left ( i\Theta \right )\right )}{idz^1d\bar{z}^1 idz^2 d\bar{z}^2} = \mathrm{tr} \left( \left[ \begin{array}{cc} \alpha^{\dag} & \beta^{\dag} \end{array} \right] \left[ \begin{array}{cc} \{C,\} & -\{B, \} \\  -\{B^{\dag}, \}  & \{A, \} \end{array} \right ] \left[ \begin{array}{c} \alpha \\ \beta \end{array} \right] \right),
	\label{writingusingblockmatrices}
	\end{gather}
	where $\{X, \}$ is the linear map $Y\rightarrow XY+YX$. At this juncture, we have the following lemma.
	\begin{lem}\label{lem:positivityofanticommutator}
		The map $Y\rightarrow CY+YC$ is positive-definite.
	\end{lem}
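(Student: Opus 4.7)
The plan is to interpret the lemma's assertion of positive-definiteness with respect to the Frobenius inner product $\langle X, Y\rangle = \mathrm{tr}(X^\dag Y)$ on $2\times 2$ matrices, since this is the pairing implicit in the block-matrix quadratic form of Equation \eqref{writingusingblockmatrices}. Thus the task reduces to showing $\mathrm{tr}(Y^\dag (CY+YC)) > 0$ for every nonzero $2\times 2$ matrix $Y$.

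The key input, already established in the paragraph preceding the lemma, is that $C$ is Hermitian and positive-definite (as a consequence of $\{A,C\}$ being positive-definite together with positive-semi-definiteness of $A$ and $C$). I would exploit this by taking the unique positive-definite Hermitian square root $S$ with $S^2 = C$, which is in particular invertible. Splitting the quadratic form as $\mathrm{tr}(Y^\dag C Y) + \mathrm{tr}(C Y^\dag Y)$ and substituting $C = SS$ with a cyclic permutation of traces, each summand rewrites respectively as $\|SY\|_F^2$ and $\|YS\|_F^2$. Both are non-negative, and each vanishes only when $Y=0$ since $S$ is invertible, so the sum is strictly positive for $Y \neq 0$.

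The main obstacle is essentially nonexistent: this is a standard linear-algebra observation that anticommutation with a positive-definite Hermitian matrix defines a positive-definite operator on matrices in the Hilbert--Schmidt sense. The only point requiring attention is being explicit about the inner product in which positive-definiteness is being asserted, namely the trace pairing used throughout this section; once that is fixed, the square-root trick gives a one-line argument. An equally short alternative would be to diagonalize $C = UDU^\dag$ and change variables $Y = UY'U^\dag$, reducing the quadratic form to $\sum_{i,j}(d_i+d_j)|Y'_{ij}|^2$, which is manifestly positive since $d_1, d_2 > 0$.
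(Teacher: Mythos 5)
Your proof is correct, and it reaches the conclusion by a slightly different route than the paper. The paper's own argument is two-step: it first observes that $\mathrm{tr}\left(Y^{\dag}(CY+YC)\right)$ is exactly the MA-semipositivity quadratic form evaluated at $\alpha=Y$, $\beta=0$, hence $\geq 0$ by hypothesis, and then upgrades to strict positivity by noting that the anticommutator map has trivial kernel (seen by diagonalising $C$, using the positive-definiteness of $C$ established just before the lemma). You instead bypass the MA-semipositivity hypothesis inside the lemma entirely and derive strict positivity directly from $C>0$, either via the square root $C=S^2$ giving $\mathrm{tr}\left(Y^{\dag}(CY+YC)\right)=\mathrm{tr}\left((SY)^{\dag}SY\right)+\mathrm{tr}\left((YS)^{\dag}YS\right)$, or by diagonalising $C$ to obtain $\sum_{i,j}(d_i+d_j)|Y'_{ij}|^2$. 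Your version makes the lemma a self-contained linear-algebra fact about anticommutation with a positive-definite Hermitian matrix (your diagonalisation variant is essentially the paper's kernel computation pushed one step further), whereas the paper's version leans on the standing MA-semipositivity assumption for the non-strict inequality and only extracts injectivity from $C>0$. Both arguments ultimately rest on the same prerequisite, namely that $C$ is positive-definite, which is correctly established in the preceding paragraph from $\{A,C\}>0$ together with $A,C\geq 0$; you are also right to flag that ``positive-definite'' here must be read with respect to the trace pairing $(X,Y)\mapsto \mathrm{tr}(X^{\dag}Y)$, which is the pairing implicit in Equation \eqref{writingusingblockmatrices} and under which the anticommutator with a Hermitian matrix is self-adjoint.
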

	\begin{proof}
		\begin{gather}
		\mathrm{tr}\left(Y^{\dag}(CY+YC) \right)= \frac{\mathrm{tr} \left (ia^{\dag} \left ( i\Theta \right ) a \right )+\mathrm{tr} \left (ia^{\dag}  a  \left ( i\Theta \right )\right )}{idz^1d\bar{z}^1 idz^2 d\bar{z}^2} \geq 0,
		\end{gather}
		where $\alpha=Y$ and $\beta=0$. Now $Y\rightarrow \{C,Y\}$ has trivial kernel (a fact that can be easily proven by diagonalising $C$). Hence this map is positive-definite.
	\end{proof}
	Let $\{C,\}^{-1}(Y)=T(Y)$. By Schur's theorem, MA-positive-definiteness holds iff the Schur complement of the matrix in Equation \eqref{writingusingblockmatrices} is positive-definite, i.e.,
	\begin{gather}
	\mathrm{tr}\left(Y^{\dag} (\{A,Y\} -\{B^{\dag},T(\{B,Y\})\})\right) > 0 \ \forall \ Y \neq 0.
	\label{positivityintermsofSchur}
	\end{gather}
	Without loss of generality, we can take $ C $ to be $$ \begin{bmatrix}
	1 & 0 \\ 0 & \lambda
	\end{bmatrix}$$ for some $ \lambda > 0 $. It then follows that $ \displaystyle T(X) = \begin{bmatrix}
	\frac{X_{11}}{2} & \frac{X_{12}}{1 + \lambda} \\ \frac{X_{21}}{1 + \lambda} & \frac{X_{22}}{2 \lambda}
	\end{bmatrix} $ for $ \displaystyle X = \begin{bmatrix}
	X_{11} & X_{12} \\ X_{21} & X_{22}
	\end{bmatrix}  $. \\
	
	Denote the inner product $ (X, Y) \mapsto \mathrm{tr}(X^{\dag} Y) $ by $ \langle X, Y \rangle $ and the inner product  $ (X, Y) \mapsto \mathrm{tr}(X^{\dag}T(Y)) $ by $ \langle A, B \rangle_T $. Also, denote $ \{ X, Y \} = XY + YX $ by $ X \cdot Y $. \\
	
	Since $ A > T(\{B, B^\dag \}) $, by using the fact that $ \mathrm{tr} $ is cyclic, it suffices to show
	\begin{equation}
	\label{MASchur}
	\langle B \cdot B^{\dag}, X \cdot X^{\dag} \rangle_T \geq \langle B \cdot X^{\dag}, B \cdot X^{\dag} \rangle_T
	\end{equation}for all $ B, X \in M_2(\mathbb{C}) $ and any positive $ \lambda $. \\
	
	For any matrices $ P, Q $,
	\begin{equation*}
	\langle P, Q \rangle_T = \sum_{i, j} \frac{\overline{P_{ij}} Q_{ij}}{\lambda_i + \lambda_j}
	\end{equation*}where $ \lambda_1 = 1 $, $ \lambda_2 = \lambda $. So the inequality $ \ref{MASchur} $ can be written (after multiplying both sides by $ 2 \lambda (1 + \lambda) $) as
	\begin{equation}
	\label{quadraticineq}
	\lambda^{2} a_{11} + \lambda (a_{11} + a_{22} + 2(a_{12} + a_{21})) + a_{22} \geq 0
	\end{equation}where
	\begin{equation*}
	a_{ij} = (B \cdot B^{\dag})_{ij} (X \cdot X^{\dag})_{ji} - |(B \cdot X^{\dag})|_{ij}^{2}.
	\end{equation*}
	
	We now compute the matrix elements of $ B \cdot X^{\dag} $
	\begin{multline}
	\label{matrixcomputation}
	BX^{\dag} + X^{\dag}B = \begin{bmatrix}
	B_{11} & B_{12} \\ B_{21} & B_{22}
	\end{bmatrix}\begin{bmatrix}
	\overline{X}_{11} & \overline{X}_{21} \\ \overline{X}_{12} & \overline{X}_{22}
	\end{bmatrix} + \begin{bmatrix}
	\overline{X}_{11} & \overline{X}_{21} \\ \overline{X}_{12} & \overline{X}_{22}
	\end{bmatrix} \begin{bmatrix}
	B_{11} & B_{12} \\ B_{21} & B_{22}
	\end{bmatrix} \\
	= \begin{bmatrix}
	2B_{11}\overline{X}_{11} + B_{12}\overline{X}_{12} + B_{21} \overline{X}_{21} & \overline{X}_{21}(B_{11} + B_{22}) + B_{12}(\overline{X}_{11} + \overline{X}_{22}) \\ 2B_{22}\overline{X}_{22} + B_{12}\overline{X}_{12} + B_{21} \overline{X}_{21} & \overline{X}_{12}(B_{11} + B_{22}) + B_{21}(\overline{X}_{11} + \overline{X}_{22})
	\end{bmatrix}.
	\end{multline}
	Using this, we have
	\begin{multline}
	a_{11} = (2|B_{11}|^{2} + |B_{12}|^{2} + |B_{21}|^{2})(2|X_{11}|^{2} + |X_{12}|^{2} + |X_{21}|^{2}) \\
	- | 2B_{11} \overline{X}_{11} + B_{12} \overline{X}_{12} + B_{21} \overline{X}_{21}|^{2}
	\end{multline}Due to the identity
	\begin{equation*}
	|v|^{2}|w|^{2} - |\langle v, w \rangle |^{2} = \sum_{i < j}|v_iw_j - w_j v_i|^{2},
	\end{equation*}we get
	\begin{equation}
	a_{11} = 2|B_{11}X_{12} - B_{12} X_{11}|^{2} + 2|B_{11}X_{21} - B_{21} X_{11}|^{2} + |B_{12}X_{21} - B_{21} X_{12}|^{2}.
	\end{equation}
	Similarly,
	\begin{equation}
	a_{22} = 2|B_{22}X_{12} - B_{12} X_{22}|^{2} + 2|B_{22}X_{21} - B_{21} X_{22}|^{2} + |B_{12}X_{21} - B_{21} X_{12}|^{2}.
	\end{equation}
	
	Next, we compute $ a_{12} + a_{21} $ using $ \ref{matrixcomputation} $ again:
	\begin{equation*}
	|B \cdot X^{\dag}|_{12}^{2} = |X_{21}|^{2}|B_{11} + B_{22}|^{2} + |B_{12}|^{2}|X_{11} + X_{22}|^{2} + 2 \Re((B_{11} + B_{22})(X_{11} + X_{22})\overline{X}_{12} \overline{B}_{12})
	\end{equation*}and
	\begin{equation*}
	|B \cdot X^{\dag}|_{21}^{2} = |X_{12}|^{2}|B_{11} + B_{22}|^{2} + |B_{21}|^{2}|X_{11} + X_{22}|^{2} + 2 \Re((B_{11} + B_{22})(X_{11} + X_{22})\overline{X}_{21} \overline{B}_{21}).
	\end{equation*}
	We also have
	\begin{align*}
	(B \cdot B^{\dag})_{12}(X \cdot X^{\dag})_{21} = &
	(B_{11} + B_{22}\overline{B}_{21} + (\overline{B}_{11} + \overline{B}_{22})B_{12}) + (X_{11} + X_{22}\overline{X}_{12} + (\overline{X}_{11} + \overline{X}_{22})X_{21}). \\
	\end{align*}
	So
	\begin{align*}
	(B \cdot B^{\dag})_{12}(X \cdot X^{\dag})_{21} + (B \cdot B^{\dag})_{21}(X \cdot X^{\dag})_{12} = & 2 \Re (B \cdot B^{\dag})_{12}(X \cdot X^{\dag})_{21}  \\ = &  2 \Re((B_{11} + B_{22})(X_{11} + X_{22})(\overline{B}_{21}\overline{X}_{12} + \overline{B}_{12}\overline{X}_{21})) \\ + &
	2 \Re((B_{11} + B_{22})(\overline{X}_{11} + \overline{X}_{22})(\overline{B}_{21}{X}_{21} + \overline{B}_{12}{X}_{12}))
	\end{align*}and hence (note that the `real part' term in $ |B \cdot X^{\dag}|_{12}^{2} + |B \cdot X^{\dag}|_{12}^{2}  $ is exactly cancelled by the first `real part' term in the previous equation)
	\begin{equation}
	-(a_{12} + a_{21}) = |(B_{11} + B_{22})X_{21} - (X_{11} + X_{22})B_{21}|^{2} + |(B_{11} + B_{22})X_{12} - (X_{11} + X_{22})B_{12}|^{2}.
	\end{equation} From the above computations, we see that
	\begin{gather*}
	a_{11} = 2|\alpha|^{2} + 2|\beta|^{2} + |\gamma|^{2} \\
	a_{22} = 2|\delta|^{2} + 2|\varepsilon|^{2} + |\gamma|^{2} \\
	-(a_{12} + a_{21}) = |\alpha + \delta|^{2} + |\beta + \varepsilon|^{2}
	\end{gather*}where
	\begin{gather*}
	\alpha = B_{11}X_{12} - B_{12} X_{11} \\
	\beta = B_{11}X_{21} - B_{21} X_{11} \\
	\gamma = B_{12}X_{21} - B_{21} X_{12} \\
	\delta = B_{22}X_{12} - B_{12} X_{22} \\
	\varepsilon = B_{22}X_{21} - B_{21} X_{22}.
	\end{gather*}
	
	As $ a_{11}, a_{22} \geq 0 $, the quadratic $ \ref{quadraticineq} $ is non-negative for all $ \lambda > 0 $ if $ a_{11} + a_{22} + 2(a_{21} + a_{12}) \geq 0 $. On the other hand, if $  a_{11} + a_{22} + 2(a_{21} + a_{12}) < 0 $, then the quadratic is positive for all $ \lambda > 0 $ iff $ |a_{11} + a_{22} + 2(a_{21} + a_{12})| = -(a_{11} + a_{22} + 2(a_{21} + a_{12}) \leq 2 \sqrt{a_{11} a_{22}} $. By the observation above,
	\begin{gather}
	-2(a_{21} + a_{12}) - a_{11} - a_{22} \leq 4(|\alpha \delta| + |\beta \varepsilon|) \\
	\leq 4(\sqrt{(|\alpha|^{2} + |\beta|^{2})(|\delta|^{2} + |\varepsilon|^{2})}) \leq 2\sqrt{a_{11}a_{22}}
	\end{gather}so the quadratic \ref{quadraticineq} is indeed non-negative, completing the proof.
	
	\section{\textbf{The vbMA equation on vortex bundles over complex surfaces}}\label{sec:rankthree}
	In this section, we first show that for $ k \geq 3 $, $\text{End}(\mathbb{C}^k)$-valued $ (1, 1) $-forms satisfying the algebraic vbMA equation on $ \mathbb{C}^2 $ and MA-semi-positivity need not be MA-positive and then extend that result to dimensions greater than 2. Since the (1, 1)-forms we consider will have the algebraic form of curvature endomorphisms of certain vortex bundles over complex surfaces, we start by reviewing the definition of these vortex bundles (one can refer to \cite{Gar93} for more details). Let $ X $ be a Riemann surface and let $ E_1 $ and $ E_2 $ be holomorphic vector bundles over $ X $. Let $ M = X \times  \mathbb{CP}^1  $ and let $ \pi_1 $ and $ \pi_2 $ be the projections onto $ X $ and $  \mathbb{CP}^1  $, respectively.
	
	\begin{defn*}[Vortex bundles]
		As a $ C^\infty $ bundle, $ V := V_1 \oplus V_2 $, where $$ V_1 = \pi_1^{*}E_1 \otimes r \pi_2^{*}\mathcal{O}(2) $$ $$ V_2 = \pi_1^{*}E_2 \otimes (r + 1) \pi_2^{*}\mathcal{O}(2), $$ where $ r $ is a positive integer. The holomorphic structure on $ V $ is not the direct sum of the structures on $ V_1 $ and $ V_2 $ but is rather induced by a section $ T $ of $ \text{Hom}(E_2, E_1) $ as explained below.
	\end{defn*}
	
	$ SU(2) $ acts on a natural way on $ \mathbb{CP}^{1} $ and we extend this action to $ M $ by allowing it to act trivially on $ X $ (and vector bundles over $ X $). One can show, as in \cite{Gar93} and \cite{Gar94}, that all $ SU(2) $-invariant metrics on $ V $ are of the form $ (H_1 \otimes h_{FS}^{r}) \oplus (H_2 \otimes h_{FS}^{(r + 1)}) =: h_1 \oplus h_2 $ where $ H_1 $, $ H_2 $ are Hermitian metrics on $ E_1, E_2 $, and $ h_{FS} $ is a metric on $ \mathcal{O}(2) $ whose curvature is the (1, 1)-form $ \displaystyle - i 2\omega_{FS} :=  2\frac{dz \wedge d\bar{z}}{(1 + |z|^2)^2} $. Since $ \mathcal{O}(2) = T^{1, 0}\mathbb{CP}^{1} $, we can choose $ h_{FS} $ to be the Fubini-Study metric $ \displaystyle h_{FS} = \frac{dz \otimes d\bar{z}}{(1 + |z|^2)^2} $. \\
	
	Let $ T $ be a holomorphic section of $ \text{Hom}(E_2, E_1) $. Consider the connection on $ V $ defined as
	
	$$ D_h =
	\begin{pmatrix}
	D_1 & \beta \\
	-\beta^{*} & D_2
	\end{pmatrix},$$
	
where $ D_i $ is the Chern connection on $ V_i $ for the metric $ h_i $ and $$ \beta = \pi_1^{*} T \otimes \pi_2^{*}\left(\frac{dz}{(1 + |z|^2)^2} \wedge d\bar{z}\right) \in H^{0, 1}(M, \text{Hom}(V_2, V_1) ).$$ Here, $ \beta^{*}$ is the adjoint of $ \beta $ taken using the induced Hermitian form on $ \text{Hom}(V_2, V_1) $. Now $ D_{h}^{0, 1} $ is clearly independent of the choice of metrics $ h $ and it can also be checked that $ D_{h}^{0, 1} \wedge D_{h}^{0, 1} = 0 $, so this connection defines a holomorphic structure on $ V $ for which it is the Chern connection and furthermore, this holomorphic structure is independent of $ h $. This defines the holomorphic structure on $ V $. \\
	
	The curvature of $ D_h $ is
	$$  \Theta_h =
	\begin{pmatrix}
	\Theta_{h_1} - \beta \wedge \beta^* & D^{1, 0} \beta \\
	-D^{0, 1} \beta^{*} & \Theta_{h_2} - \beta^* \wedge \beta
	\end{pmatrix} = \begin{pmatrix}
	\Theta_{1} + (2r + TT^*) (-i\omega_{FS}) & (D^{1, 0} T \otimes \frac{dz}{(1 + |z|^2)^2}) \wedge d\bar{z}) \\
	-(D^{0, 1} T^{*} \otimes \frac{\partial}{\partial z}) \wedge dz  & \Theta_{2} + (2r + 2 - T^*T)(-i\omega_{FS})
	\end{pmatrix} ,$$ where $ \Theta_{1} $ and $ \Theta_{2} $ are the curvatures of the Chern connections on $ E_1 $ and $ E_2 $ defined by $ H_1$ and $ H_2 $, respectively. \\
	
	Hence,
	$$  i\Theta_h \wedge i\Theta_h =
	\begin{pmatrix}
	\{ i\Theta_{1}, (2r + TT^*) \} - iD^{1, 0}T \wedge D^{0, 1} T^* & 0 \\
	0 & \{ i\Theta_{2}, (2r + 2 - T^*T) \} +  iD^{0, 1} T^* \wedge D^{1, 0}T
	\end{pmatrix} \wedge \omega_{FS}, $$ and the vbMA equation on $ V $  becomes
	\begin{align}
	\{ i\Theta_{1}, (2r + TT^*) \} - iD^{ 1, 0}T \wedge D^{0, 1} T^* = \eta \cdot \text{Id}_1 \\
	\{ i\Theta_{2}, (2r + 2 - T^*T) \} +  iD^{0, 1} T^* \wedge D^{1, 0}T = \eta \cdot \text{Id}_2
	\end{align} for some volume form $ \eta $ on $ X $.
	
	Let $ \displaystyle \mathcal{A} = \begin{pmatrix}
	\alpha & \beta \\ \gamma & \delta
	\end{pmatrix} \in \Omega^{1, 0}(\text{End}(V))
	$. By the definition of MA-positivity, $ \Theta_h $ is MA-(semi-)positive iff $ \Tr(i\mathcal{A} \wedge \mathcal{A}^* \wedge i\Theta_h) + \Tr(i\mathcal{A} \wedge i\Theta_h \wedge \mathcal{A}^* ) > 0$ for all $ \mathcal{A} \neq 0 $. We now proceed to compute these wedge products. Firstly,
	$$
	\mathcal{A} \wedge \mathcal{A}^* = \begin{pmatrix}
	\alpha \wedge \alpha^* + \beta \wedge \beta^* & \alpha \wedge \gamma^* + \beta  \wedge \delta^*  \\ \gamma \wedge \alpha^* + \delta \wedge \beta^* & \delta \wedge \delta^* + \gamma \wedge \gamma^*
	\end{pmatrix}.
	$$
	Writing
	$$
	\Theta_h = \begin{pmatrix}
	\Theta_{1} + M  & N \\ -N^* & \Theta_2 + L
	\end{pmatrix}
	,$$ we see that
	\begin{multline}
	\label{AATheta}
	\Tr(\mathcal{A} \wedge \mathcal{A}^* \wedge \Theta_h) = \Tr((\beta \wedge \beta^* + \alpha \wedge \alpha^*) \wedge \Theta_1 ) + \Tr((\beta \wedge \beta^* + \alpha \wedge \alpha^*) \wedge M ) \\
	+ \Tr(\gamma \wedge \alpha^* \wedge N) - \Tr(\alpha \wedge \gamma^* \wedge N^*) \\
	+ \Tr(\delta \wedge \beta^* \wedge N) - \Tr(\beta \wedge \delta^* \wedge N^*) \\
	+ \Tr((\gamma \wedge \gamma^* + \delta \wedge \delta^*) \wedge \Theta_2 ) + \Tr((\gamma \wedge \gamma^* + \delta \wedge \delta^*) \wedge L ).
	\end{multline}
	Also,
	$$
	\mathcal{A} \wedge \Theta_h =
	\begin{pmatrix}
	\alpha \wedge \Theta_1 + \alpha \wedge M - \beta \wedge N^* & \alpha \wedge N + \beta \wedge \Theta_2 + \beta \wedge L \\
	\gamma \wedge \Theta_1 + \gamma \wedge M - \delta \wedge N^* & \gamma \wedge N + \delta \wedge \Theta_{2} + \delta \wedge L
	\end{pmatrix},
	$$
	so
	\begin{multline}
	\label{AThetaA}
	\Tr(\mathcal{A}  \wedge \Theta_h \wedge \mathcal{A}^*) = \Tr(\alpha \wedge \Theta_{1} \wedge \alpha^*) + \Tr(\gamma \wedge \Theta_{1} \wedge \gamma^*) + \Tr(\alpha \wedge M \wedge \alpha^*) + \Tr(\gamma \wedge M \wedge \gamma^*) \\
	+ \Tr(\alpha \wedge N \wedge \beta^*) - \Tr(\beta \wedge N^* \wedge \alpha^*) \\
	+ \Tr(\gamma \wedge N \wedge \delta^*) - \Tr(\delta \wedge N^* \wedge  \gamma^*) \\
	+ \Tr(\beta \wedge \Theta_{2} \wedge \beta^*) + \Tr(\delta \wedge \Theta_{2} \wedge \delta^*) + \Tr(\beta \wedge L \wedge \beta^*) + \Tr(\delta \wedge L \wedge \delta^*)
	\end{multline}

	Fix a point $ q  \in M $ and write $ \alpha = \alpha_X + \alpha_P $ at the point $ q $, where $ \alpha_X \in \text{End}({V_1}_q) \otimes T^{(1, 0)}_{\pi_1(q)} X $ and $ \alpha_P \in \text{End}({V_1}_q) \otimes T^{(1, 0)}_{\pi_2(q)} \mathbb{P}^1 $. We similarly decompose $ \beta, \gamma, \delta $. Note that $ \Theta_1 \in \text{End}({V_1}_q) \otimes T^{(1, 1)}_{\pi_1(q)} X $, $ M \in \text{End}({V_1}_q) \otimes T^{(1, 1)}_{\pi_2(q)} \mathbb{P}^1 $, $ N \in \text{Hom}({V_2}_q, {V_1}_q) \otimes (T^{(1, 0)}_{\pi_1(q)} X \wedge T^{(0, 1)}_{\pi_2(q)} \mathbb{P}^1) $, etc. Now, at $ q $, \eqref{AATheta} and \eqref{AThetaA} can be written as
	\begin{multline*}
	\Tr(\mathcal{A} \wedge \mathcal{A}^* \wedge \Theta_h) = \Tr((\beta_P \wedge \beta_P^* + \alpha_P \wedge \alpha_P^*) \wedge \Theta_1 ) + \Tr((\beta_X \wedge \beta_X^* + \alpha_X \wedge \alpha_X^*) \wedge M ) \\
	+ \Tr(\gamma_P \wedge \alpha_X^* \wedge N) - \Tr(\alpha_X \wedge \gamma_P^* \wedge N^*) \\
	+ \Tr(\delta_P \wedge \beta_X^* \wedge N) - \Tr(\beta_X \wedge \delta_P^* \wedge N^*) \\
	+ \Tr((\gamma_P \wedge \gamma_P^* + \delta_P \wedge \delta_P^*) \wedge \Theta_2 ) + \Tr((\gamma_X \wedge \gamma_X^* + \delta_X \wedge \delta_X^*) \wedge L ).
	\end{multline*}and
	\begin{multline*}
	\Tr(\mathcal{A}  \wedge \Theta_h \wedge \mathcal{A}^*) = \Tr(\alpha_P \wedge \Theta_{1} \wedge \alpha_P^*) + \Tr(\gamma_P \wedge \Theta_{1} \wedge \gamma_P^*) + \Tr(\alpha_X \wedge M \wedge \alpha_X^*) + \Tr(\gamma_X \wedge M \wedge \gamma_X^*) \\
	+ \Tr(\alpha_P \wedge N \wedge \beta_X^*) - \Tr(\beta_X \wedge N^* \wedge \alpha_P^*) \\
	+ \Tr(\gamma_P \wedge N \wedge \delta_X^*) - \Tr(\delta_X \wedge N^* \wedge  \gamma_P^*) \\
	+ \Tr(\beta_P \wedge \Theta_{2} \wedge \beta_P^*) + \Tr(\delta_P \wedge \Theta_{2} \wedge \delta_P^*) + \Tr(\beta_X \wedge L \wedge \beta_X^*) + \Tr(\delta_X \wedge L \wedge \delta_X^*)
	\end{multline*}
	Notice that in the sum $ \Tr(\mathcal{A} \wedge \mathcal{A}^* \wedge \Theta_h) + \Tr(\mathcal{A}  \wedge \Theta_h \wedge \mathcal{A}^*) $, the sets of variables $ \{ \alpha_X, \gamma_P, \delta_X \} $, $ \{  \alpha_P, \beta_X, \delta_P \} $, $ \{ \beta_P \} $, and $ \{ \gamma_X \} $ are mutually decoupled. Let
	\begin{multline*}
	\mathfrak{B}_1 = \Tr( i\alpha_X \wedge \alpha_X^* \wedge iM ) + \Tr(i\alpha_X \wedge iM \wedge \alpha_X^*) + \Tr(i\gamma_P \wedge i\Theta_{1} \wedge \gamma_P^*) \\ + \Tr(i\delta_X \wedge iL \wedge \delta_X^*) + \Tr(i\delta_X \wedge \delta_X^* \wedge iL ) + \Tr(i\gamma_P \wedge \gamma_P^* \wedge i\Theta_{2}) \\
	+ \Tr(i\gamma_P \wedge iN \wedge \delta_X^*) - \Tr(i\delta_X i\wedge N^* \wedge  \gamma_P^*) + \Tr(i\gamma_P \wedge \alpha_X^* \wedge iN) - \Tr(i\alpha_X \wedge \gamma_P^* \wedge iN^*)	
	\end{multline*}
	\begin{multline*}
	\mathfrak{B}_2 = \Tr(i\alpha_P \wedge \alpha_P^* \wedge i\Theta_1 ) + \Tr(i\alpha_P \wedge i\Theta_{1} \wedge \alpha_P^*) + \Tr(i\beta_X \wedge \beta_X^* \wedge iM ) \\
	+ \Tr( i\delta_P \wedge \delta_P^* \wedge i\Theta_2 ) + \Tr(i\delta_P \wedge i\Theta_{2} \wedge \delta_P^*) + \Tr(i\beta_X \wedge iL \wedge \beta_X^*) \\
	+ \Tr(i\delta_P \wedge \beta_X^* \wedge iN) - \Tr(i\beta_X \wedge \delta_P^* \wedge iN^*) + \Tr(i\alpha_P \wedge iN \wedge \beta_X^*) - \Tr(i\beta_X \wedge iN^* \wedge \alpha_P^*)
	\end{multline*}		
	\begin{align*}
	\mathfrak{B}_3 =  \Tr(i\beta_P \wedge i\Theta_{2} \wedge \beta_P^*) + \Tr(i\beta_P \wedge \beta_P^* \wedge i\Theta_1 )
	\end{align*}
	\begin{align*}	
	\mathfrak{B}_4 = \Tr(i\gamma_X \wedge iM \wedge \gamma_X^*) + \Tr(i\gamma_X \wedge \gamma_X^* \wedge iL )
	\end{align*}
	It follows that $ i\Theta_h $ is MA-(semi)positive at $ q $ iff each $ \mathfrak{B}_i $ is (semi)positive. \\
	
	\begin{rem*}
		For all $ SU(2)$-invariant connections $ \nabla_h $ of the sort we consider in this section, if $ g $ is an $ SU(2) $-invariant endomorphism, the only non-vanishing components of $ \nabla_h $ at any point are $ \alpha_X $, $ \gamma_P $ and $ \delta_X $, so restricted MA-positivity in this case corresponds to positivity of $ \mathfrak{B}_1 $.
	\end{rem*}
	
	Consider the bilinear form $ \mathfrak{B}_1 $. Let $ w $ be a local holomorphic coordinate for $ X $ near $ \pi_1(q) $, and $ z $ the standard holomorphic coordinate for $ \mathbb{P}^1 $ near $ \pi_2(q) $. By an abuse of notation, let us recycle the variables $ \alpha, \gamma, \delta  $ and write $ \alpha_X =: \alpha \cdot dw  $,  $ \delta_X =: \delta\cdot dw  $, $ \gamma_P = (\gamma \otimes \frac{\partial}{\partial z}) dz $, with $ \alpha \in \text{End}({E_1}_{\pi_1(q)}) $, $ \delta \in \text{End}({E_2}_{\pi_2(q)}) $, $ \gamma \in \text{Hom}({E_1}_{\pi_1(q)}, {E_2}_{\pi_2(q)}) $. Also, let $ A \cdot idw \wedge d \bar{w} := i\Theta_{1} $, $ A' \cdot idw \wedge d \bar{w} := i\Theta_{2} $, $ B \omega_{FS} = iM $, $ B' \omega_{FS} := iL $, $ (C \otimes \frac{dz}{(1 + |z|^2)^2}) dw \wedge d \bar{z} = N $. Let $ n $ be the rank of $ E_1 $. From here on, we take $ \text{rank}(E_2) = 1 $ and suppose that orthonormal bases are chosen for $ E_1, E_2 $. With such a choice of basis, we have (after dividing $ \mathfrak{B}_1 $ by $ idw \wedge d\bar{w} \wedge idz \wedge d\bar{z} $)
	\begin{align*}
	\mathfrak{B}_1 = \Tr(\{B, \alpha \}\alpha^*) + \gamma A \gamma^* + 2B' |\delta|^2 + A' |\gamma|^2 - 2 \Re(\bar{\delta} \gamma C ) - 2 \Re( \gamma \alpha^* C ),
	\end{align*} and with an analogous notation (with $ \alpha, \beta, \delta $ now re-defined in terms of $ \alpha_P, \beta_X, \delta_P $ instead),
	\begin{align*}
	\mathfrak{B}_2 = \Tr(\{A, \alpha \}\alpha^*) + \beta^* B \beta + 2A' |\delta|^2 + B' |\beta|^2 - 2 \Re(\bar{\delta} \beta C^* ) - 2 \Re( \beta^* \alpha C ).
	\end{align*}In this notation, the vbMA equation can be written as
	\begin{align}
	\label{vortexvbMAsurface}
	\{A, B\} - CC^* = k \cdot \text{Id}_n \\
	\label{vortexvbMAsurface2}
	\{A', B'\} - C^*C = k
	\end{align}for some $ k > 0 $.
	Suppose that the orthonormal bases of $ E_1 $ and $ E_2 $ have also be chosen such that $ B = 2r + TT^* $ is diagonal. We then have
	\begin{align*}
	\mathfrak{B}_1 = \sum_{i, j} |\alpha_{i, j}|^2 (b_i + b_j) + \sum_{i, j} a_{ij}\gamma_i \bar{\gamma_j} + 2B' |\delta|^2 + A' |\gamma|^2 - 2 \Re(\bar{\delta} \sum_i \gamma_i c_i ) - 2 \Re( \sum_{i, j} \bar{\alpha_{j, i}} c_i \gamma_j ),
	\end{align*} where the $ b_i $s are the entries of the diagonalized matrix $ B $, $ a_{ij} $ are the entries of $ A $, $ c_i $ are the entries of $ C $, and so on.
	
	 As shown in the proof of the lemma below, we have $ B' > 0 $, so \eqref{vortexvbMAsurface}, \eqref{vortexvbMAsurface2} can be solved for $ A $ and $ A' $ in terms of $ k $, $ B $, $ B' $ and $ C $:
	\begin{equation}
		\label{equationsfora}
		a_{ij} = \frac{k \delta_{ij} + c_i \bar{c_j}}{b_i + b_j}, A' = \frac{k + |C|^2}{2B'}.
	\end{equation}
	
	\begin{rem*}
		\label{bivalues}
		Since $ TT^* $ has rank at most 1 when $ \text{rank}(E_2) = 1 $, we can have $ b_i \neq 2r $ for at most a single $ i \in {1, \dots, n} $, which we take to be $ i = 1 $. We then have $ b_1 = 2r + |T|_h^2 $ and $ B' = 2r + 2 - |T|_h^2 $. Since $ B $ is supposed to be diagonal in the chosen basis, we must also have $ T = (t, 0 \dots, 0) $ in this basis for some $ t \in \mathbb{C} $. Thus, $ |T|_h $, the entries of $ C dw = \nabla_h^{1, 0} T $ and $ k $ are the only independent parameters involved here.
	\end{rem*}

	\begin{lem}
		\label{boundOnT}
	If the metric $ h $ is a solution of the vbMA equation, then $ |T|_{h} \leq 1 $
\end{lem}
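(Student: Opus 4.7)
The plan is to apply the maximum principle on the compact Riemann surface $X$ to the scalar function $\tau := |T|_h^2$. Let $p\in X$ be a point at which $\tau$ attains its maximum, and adopt the adapted orthonormal frames described in the remark preceding the lemma: $T(p)=(t,0,\ldots,0)^{\mathsf{T}}$ with $|t|^2=\tau$, and $B=2r+TT^*$ diagonal with $b_1=2r+\tau$ and $b_i=2r$ for $i>1$. The first-order condition $\partial\tau|_p=0$ reduces in these coordinates to $\bar{t}\,c_1=0$, forcing $c_1=0$, where $C$ is the column vector defined by $\nabla^{1,0}T|_p = C\,dw$. Substituting $c_1=0$ into the $(1,1)$-entry of \eqref{vortexvbMAsurface} yields $a_{11}=k/(2(2r+\tau))$, and \eqref{vortexvbMAsurface2} yields $A'=(k+|C|^2)/(2B')$. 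The auxiliary claim $B'>0$ follows immediately: if $B'\leq 0$ at $p$ then $2A'B'-|C|^2=k>0$ combined with $|C|^2\geq 0$ forces a sign contradiction, and by maximality $B'>0$ on all of $X$.

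Next I compute $\bar\partial\partial\tau$. Since $T$ is holomorphic, the Chern-connection identity $\nabla^{0,1}\nabla^{1,0}T = \Theta_{E_1}T - T\,\Theta_{E_2}$ combined with
\begin{equation*}
\bar\partial\partial|T|_h^2 = \langle \nabla^{0,1}\nabla^{1,0}T,\, T\rangle_h + |\nabla^{1,0}T|_h^2
\end{equation*}
evaluates in the adapted frame at $p$ to
\begin{equation*}
\bar\partial\partial\tau|_p = \bigl((a_{11}-A')\tau + |C|^2\bigr)\,dw\wedge d\bar w.
\end{equation*}
The second-order maximum principle $i\bar\partial\partial\tau|_p\geq 0$ yields $(a_{11}-A')\tau + |C|^2\geq 0$. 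Substituting the formulas for $a_{11}$ and $A'$ and clearing the positive denominators $2(2r+\tau)(2r+2-\tau)$ reduces this to the key inequality
\begin{equation*}
|C|^2(2r+\tau)(4r+4-3\tau) \geq 2k\tau(\tau-1).
\end{equation*}

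The main obstacle is extracting $\tau\leq 1$ from this inequality, since for $\tau>1$ its right-hand side is positive but a sufficiently large $|C|^2$ can satisfy the bound. I expect the closing argument to exploit the full matrix structure of the vbMA system at $p$: with $c_1=0$, the off-diagonal entries of \eqref{vortexvbMAsurface} force $A$ to be block-diagonal $\mathrm{diag}(a_{11}, A'')$ with $A'' = (kI_{n-1} + C'(C')^{*})/(4r)$ and $C'=(c_2,\ldots,c_n)^{\mathsf{T}}$, and requiring that this $A$ arise globally as the Chern curvature of $h_1$ should provide a further constraint relating $|C|^2$, $\tau$ and $k$. Alternatively, an auxiliary maximum-principle argument applied to a cleverly chosen function (for example $\tau$ combined with $\log B'$ or $\log b_1$) may sharpen the inequality directly. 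Either route requires delicate handling of the coupling between the $E_1$- and $E_2$-vbMA equations characteristic of the vortex bundle, and this algebraic closure constitutes the substantive content of the proof.
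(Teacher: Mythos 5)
There is a genuine gap, and it is a sign error in the Bochner-type identity rather than a missing idea. For a holomorphic section $T$ of $\mathrm{Hom}(E_2,E_1)$ the correct formula is
\begin{equation*}
i\partial\bar\partial|T|_h^2 \;=\; -\,T^{*_h}\,i\Theta_1\,T \;+\; i\Theta_2\,|T|_h^2 \;-\; i\,D^{0,1}T^{*_h}\wedge D^{1,0}T,
\end{equation*}
and the last term equals $+|C|^2\, i\,dw\wedge d\bar w\geq 0$: the gradient term enters with the \emph{opposite} sign to the curvature term (this is the same sign pattern that shows negative bundles admit no nonzero holomorphic sections). At the maximum of $\tau=|T|_h^2$ the maximum principle therefore gives $(a_{11}-A')\tau \geq |C|^2\geq 0$, not your $(a_{11}-A')\tau\geq -|C|^2$. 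With the correct inequality the proof closes immediately: since $a_{11}=(k+|c_1|^2)/(2(2r+\tau))$ and $A'=(k+|C|^2)/(2(2r+2-\tau))\geq (k+|c_1|^2)/(2(2r+2-\tau))$, the inequality $a_{11}\geq A'$ forces $2r+2-\tau\geq 2r+\tau$, i.e.\ $\tau\leq 1$. This is exactly the paper's argument (the paper does not even need your first-order observation $c_1=0$; it just replaces $|C|^2$ by the smaller $|c_1|^2$ in the positive term). The ``main obstacle'' you describe, and the entire speculative closing paragraph about exploiting the off-diagonal structure of the system or auxiliary test functions, are artifacts of the flipped sign -- no further input is needed.

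Two smaller points. First, your derivation of $B'>0$ is incomplete: from $2A'B'=k+|C|^2>0$ you only get that $A'$ and $B'$ have the same sign, and ruling out both being negative requires an input such as $A'\geq 0$, which the paper extracts from MA-semi-positivity of $\mathfrak{B}_2$ (so the lemma is implicitly used under the standing semi-positivity hypothesis). Second, the reduction $\partial\tau|_p=0\Rightarrow c_1=0$ requires $T(p)\neq 0$; this is harmless since $T(p)=0$ gives $\tau\equiv 0\leq 1$ trivially, but it should be said.
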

\begin{proof}
	By the semi-positivity of $ \mathfrak{B}_2 $, $ A' \geq 0 $. By $ \eqref{vortexvbMAsurface2} $ and the fact that $ k > 0 $, $ A' B' > 0 $ and hence $ A', B' > 0 $. Differentiating $ \langle T, T \rangle_h $ twice gives $ \partial \bar{\partial} \langle T, T \rangle_h = -D^{0, 1}T^{*_h} \wedge D_h^{1, 0} T - T^{*_h} \Theta_{1} T + \Theta_{2} |T|_h^2 $. At a point $ p $ where $ |T|^2_h $ attains its maximum, $ i \partial \bar{\partial} \langle T, T \rangle_h \leq 0 $ and it is always true that $ iD^{0, 1}T^{*_h} \wedge D_h^{1, 0} T \leq 0$, so it follows that at $ p $, $-T^{*_h} i\Theta_{1} T + i\Theta_{2} |T|_h^2 \leq 0 $. By the above remark and the expressions for $ A, A' $ (i.e. $ \Theta_{1} $, $ \Theta_{2} $), this is equivalent to the inequality
	$$ -\frac{(k + |c_1|^2)|T|_h^2}{2(2r + |T|_h^2)} + \frac{(k + |C|^2)|T|_h^2}{2(2r + 2 - |T|_h^2)} \leq 0.
	$$So, at $ p $, we must have $$ 0 \geq -\frac{(k + |c_1|^2)|T|_h^2}{2(2r + |T|_h^2)} + \frac{(k + |c_1|^2)|T|_h^2}{2(2r + 2 - |T|_h^2)} = \frac{(k + |c_1|^2)|T|_h^2(|T|_h^2 - 1)}{(2r + |T|_h^2)(2r + 2 - |T|_h^2)}   .$$Since $ T $ is not identically zero and $ 2r + 2 - |T|_h^2 > 0 $, this implies $ \sup |T|_h^2 = |T|_h^2(p) \leq 1 $.
\end{proof}

\begin{thm}
	There exist solutions to the algebraic equations \eqref{vortexvbMAsurface}, \eqref{vortexvbMAsurface2} which are MA-semi-positive but not MA-positive.
\end{thm}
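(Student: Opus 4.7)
The plan is to construct the counterexample inside the vortex bundle framework of this section, taking $\text{rank}(E_1)=n\geq 2$ and $\text{rank}(E_2)=1$ so that $\text{rank}(V)=n+1\geq 3$. Working pointwise, Lemma~\ref{boundOnT} does not apply, and specialising to $T=(t,0,\ldots,0)$ makes $B=\text{diag}(2r+|t|^{2},2r,\ldots,2r)$ automatically diagonal and $C=(c_1,0,\ldots,0)$. The algebraic vbMA equations \eqref{vortexvbMAsurface}, \eqref{vortexvbMAsurface2} then determine $A$ and $A'$ via \eqref{equationsfora}, leaving $k$, $|t|^{2}$ and $|c_1|^{2}$ as free parameters.

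I will analyse $\mathfrak{B}_1$ by two successive Schur complements. Since $c_i=0$ for $i\geq 2$, every $\alpha_{i,j}$ with $j\neq 1$ appears only in the manifestly nonnegative sum $\sum|\alpha_{i,j}|^{2}(b_i+b_j)$ and is decoupled from the rest, so these can be discarded. The first column $\alpha_{j,1}$ couples only to $\gamma_j$ via $-2\Re(c_1\bar\alpha_{j,1}\gamma_j)$, and $\delta$ couples only to $\gamma_1$ via $-2\Re(\bar\delta c_1\gamma_1)$; moreover the Gram matrix $M=A+A'\,\text{Id}$ is already diagonal since $c_i\bar c_j$ is nonzero only at $(i,j)=(1,1)$. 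Completing the square first in each $\alpha_{j,1}$ and then in $\delta$ reduces $\mathfrak{B}_1$ to a diagonal form in $\gamma$ with residual coefficient $\frac{k}{2b_1}+\frac{k}{2B'}>0$ on $|\gamma_1|^{2}$ and $\frac{k}{4r}+A'-\frac{|c_1|^{2}}{4r+|t|^{2}}$ on $|\gamma_j|^{2}$ for $j\geq 2$.

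The degeneracy is produced by tuning this last coefficient to vanish. Its slope in $|c_1|^{2}$ is $\frac{1}{2(2r+2-|t|^{2})}-\frac{1}{4r+|t|^{2}}$, which is strictly negative whenever $|t|^{2}<4/3$, while the constant term is strictly positive, so there is a unique positive $|c_1|^{2}$ that makes the coefficient zero; the clean choice $|t|^{2}=0$ yields $|c_1|^{2}=k(2r+1)$. At this value, the nonzero configuration $\gamma_2=1$, all other $\gamma_i=0$, $\delta=0$, $\alpha_{2,1}=c_1/(4r)$ and all other $\alpha$-entries zero makes $\mathfrak{B}_1=0$. A parallel Schur-complement reduction applied to $\mathfrak{B}_2$ shows that its residual $|\beta_j|^{2}$ coefficients reduce to positive rational multiples of $k$ (for $j\geq 2$, to $\frac{6(2r+1)}{2r+3}$; for $j=1$, to $\frac{2r+1}{r+1}$), while $\mathfrak{B}_3$ and $\mathfrak{B}_4$ are manifestly positive since $A,A',B,B'$ are all positive. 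Hence $i\Theta_h$ is MA-semi-positive but not MA-positive, settling the surface case for every rank $n+1\geq 3$.

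To pass to ambient complex dimension $m>2$, I replace $i\Theta$ by $i\tilde\Theta=i\Theta+\omega_0\otimes\text{Id}$, where $\omega_0=\sum_{k=3}^{m}i\,dz^{k}\wedge d\bar z^{k}$ lives on the extra coordinates. The vanishings $(i\Theta)^{j}=0$ for $j\geq 3$ and $\omega_0^{j}=0$ for $j\geq m-1$ leave only the $k=2$ binomial term in $(i\tilde\Theta)^{m}$, giving a volume form times the identity; taking the test endomorphism $a$ in the original $\mathbb{C}^{2}$ directions produces a positive multiple of the surface MA-expression wedged with $\omega_0^{m-2}$, so the null direction persists, and semi-positivity on all of $\Lambda^{1,0}(\mathbb{C}^m,\text{End}(\mathbb{C}^{n+1}))$ follows by decomposing $a$ into its $\mathbb{C}^{2}$- and $\mathbb{C}^{m-2}$-pieces. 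A similar block-diagonal trick with an extra summand $\lambda\omega_0\otimes\text{Id}$ (with $\lambda$ chosen to match $\eta$) upgrades the rank to any $r>n+1$. I expect the main obstacle to be the bookkeeping in the successive Schur-complement reductions of $\mathfrak{B}_1$ and the parallel verification for $\mathfrak{B}_2$, which is feasible precisely because the choice $c_i=0$ for $i\geq 2$ keeps every intermediate matrix diagonal.
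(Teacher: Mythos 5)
Your construction is essentially the paper's own: the Schur-complement reduction lands you on exactly the degeneracy condition $T=0$, $|C|^{2}=k(2r+1)$, and your example (with the nonzero entry of $C$ placed in the first rather than the last basis slot) coincides with the paper's Example \ref{exampleOnsurfaces}, with the correct residual coefficients for $\mathfrak{B}_1$ and $\mathfrak{B}_2$ and the same null vector up to a permutation of the orthonormal basis of $E_1$. Your final paragraph on ambient dimension $m>2$ is not needed for this theorem (which concerns only the surface equations \eqref{vortexvbMAsurface}, \eqref{vortexvbMAsurface2}) and is incomplete as written: the semi-positivity contributed by the $\mathbb{C}^{m-2}$-component of the test form does not ``follow by decomposing'' but reduces to $\Tr\left((i\Theta)^{2}\mathcal{D}\mathcal{D}^{*}+\mathcal{D}(i\Theta)^{2}\mathcal{D}^{*}+i\Theta\mathcal{D}\wedge i\Theta\mathcal{D}^{*}\right)\geq 0$, whose mixed term has no a priori sign and is the substantive computation in the paper's subsequent higher-dimensional theorem.
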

\begin{proof}
	See (counter-)example \ref{exampleOnsurfaces} below.
\end{proof}

	To gain more insight into the nature of the constraints imposed by the equations \eqref{vortexvbMAsurface}, \eqref{vortexvbMAsurface2} combined with MA-semi-positivity and to also find a procedure for coming up with such counterexamples, we find precise conditions for the positivity of $ \mathfrak{B}_1 $. As $ \mathfrak{B}_1 $ is a bilinear form in the matrices $ \alpha, \gamma, \delta $, the (semi)-positivity of $ \mathfrak{B}_1 $ is equivalent to the (semi)-positivity of the $ (n^2 + n + 1) \times (n^2 + n + 1) $ Hermitian matrix
	$$\mathcal{M} = \begin{pmatrix}
		\mathcal{B} & -\mathcal{D}^* & 0 \\
		-\mathcal{D} & A + A' \cdot \text{Id}_r & -C \\
		0 & -C^* & 2B'
	\end{pmatrix},
	$$ where $ \mathcal{B} $ is the $ n^2 \times n^2 $ diagonal matrix with entries $ \mathcal{B}_{ij, i'j'} = (b_i + b_j) \delta_{ii'} \delta_{jj'} $ and $ \mathcal{D} $ is the $ n \times n^2 $ matrix with entries $ \mathcal{D}_{j', ij} = \bar{c_j} \delta_{j'i}  $. \\
	
	Since $ B $ is positive definite, so is $ \mathcal{B} $. Hence, $ \mathcal{M} $ is positive-(semi)definite iff the Schur complement
	$$
	\mathcal{P} = \begin{pmatrix}
	A + A' \cdot \text{Id}_n & -C \\
	-C^* & 2B'
	\end{pmatrix} - \begin{pmatrix}
	-\mathcal{D} \\ 0
	\end{pmatrix} \mathcal{B}^{-1} \begin{pmatrix}
	-\mathcal{D}^* & 0
	\end{pmatrix}
	$$is positive-(semi)definite. It is easy to check that the $ (n + 1) \times (n + 1) $ matrix $$ \begin{pmatrix}
	-\mathcal{D} \\ 0
	\end{pmatrix} \mathcal{B}^{-1} \begin{pmatrix}
	-\mathcal{D}^* & 0
	\end{pmatrix} = \begin{pmatrix}
	\mathcal{Q}& 0 \\ 0 & 0
	\end{pmatrix},
	$$ where $ \mathcal{Q} $ is the $ n \times n $ diagonal matrix with entries $ \displaystyle \mathcal{Q}_{ij} = \delta_{ij} \sum_{k = 1, \dots, r} \frac{|c_k|^2}{b_i + b_k} $. Hence, we get
	$$
	\mathcal{P} = \begin{pmatrix}
	A + A' \cdot \text{Id}_n - \mathcal{Q} & -C \\
	-C^* & 2B'
	\end{pmatrix}.
	$$
	Furthermore, since we know that $ B' > 0 $, $ \mathcal{P} $ is positive-(semi)definite iff the Schur complement
	$$
	\label{mainMApos}
	\mathcal{R} = A + A' \cdot \text{Id}_n - \mathcal{Q} - \frac{CC^*}{2B'}
	$$is positive-(semi)definite. By the earlier remark that $ b_i = b_j = 2r $ for all $ i, j > 1 $ and the expression $ \eqref{equationsfora} $ for $ A $, we can write $ \mathcal{R} $ as
	$$
	\mathcal{R} = \begin{pmatrix}
	p' & \bar{q'} v^* \\
	q' v & r' \cdot \text{Id}_{n - 1} + s' \cdot vv^*
	\end{pmatrix},
	$$where $ v $ is the column vector $  (c_2, \dots, c_n)^t $, $  p' = \frac{k}{2b_1} + A' - \frac{|v|^2}{b_1 + 2r} - \frac{|c_1|^2}{2B'} $, $  q' = \bar{c_1} (\frac{1}{b_1 + 2r} - \frac{1}{2B'}) $, $ r' = \frac{k}{4r} + A' - \frac{|v|^2}{4r} + \frac{|c_1|^2}{b_1 + 2r} $ and $ s' = (\frac{1}{4r} - \frac{1}{2B'}) $. \\
	
	From this we see that $ \mathcal{R} $ is positive definite iff
	\begin{align*}
		p' > 0 \text{ and }  r' \cdot \text{Id}_{n - 1} + \bigg(s' - \frac{|q'|^2}{p'} \bigg) vv^* > 0
	\end{align*}(note that the eigenvalues of this latter matrix are just $ r' $ and $ \displaystyle r' + |v|^2(s' - \frac{|q'|^2}{p'}) $). Also, $ \mathcal{R} $ is strictly semi-definite iff either
	\begin{align}
	\label{semidefpossibility1}
		 p' = 0, q'v = 0 , \text{ and } r' \cdot \text{Id}_{n - 1} + s' \cdot vv^* \geq 0
	\end{align}or
	\begin{align}
	\label{semidefpossibility2}
		  p' > 0 \text{ and }  r' \cdot \text{Id}_{n - 1} + \bigg(s' - \frac{|q'|^2}{p'} \bigg) vv^* \text{ is strictly semi-definite}.
	\end{align}Thus, to show that strict semi-definiteness is a possibility, we need to produce matrices which satisfy one of the conditions for strict $ MA $-semi-positiveness while also solving the vbMA equation. In other words, we need to show that the independent parameters here (see the previous remark) are not overdetermined by these conditions. We only consider the case \eqref{semidefpossibility1} for simplicity. \\
	
	In this case, by the definition of $ p' $ and \eqref{equationsfora}, $ p' = 0 $ is equivalent to the equation $ k(\frac{1}{2B'} + \frac{1}{2b_1}) = |v|^2 (\frac{1}{b_1 + 2r} - \frac{1}{2B'})  $. Hence, $ v \neq 0 $. Since $ q' = 0 $ is required for semi-positivity in this case and since $ q = v(\frac{1}{b_1 + 2r} - \frac{1}{2B'})\bar{c_1} $, we get that $ c_1 = 0 $, which implies $ |v|^2 = |C|^2 $. When $ c_1 = 0 $, the only eigenvalue of $ r' \cdot \text{Id}_{n - 1} + s' \cdot vv^* \geq 0  $ which is not necessarily positive is $ r' $ as can be checked by using the expressions for $ r', s',$ etc. Using $ p' = 0 $, $ c_1 = 0 $, we get $ r' = |v|^2(\frac{1}{b_1 + 2r} - \frac{1}{4r}) $. Since $ b_1 = 2r + |T|^2 \geq 2r$, we see that $ r' \leq 0 $ and $ r' = 0 $ iff $ b_1 = 2r $ or equivalently, $ |T|^2 = 0 $ and consequently $ 2B' = 4r + 4 $. So the conditions $ b_1 = 2r $, and $ |C|^2 = k(2r + 1) $ are necessary for the first case. \\
	
	Conversely, if $ b_1 = 2r $ (equivalently, $ |T|_h = 0 $), the matrix $ B $ is a multiple of the identity and hence we can choose orthonormal bases such that $ C $ is of the form $ (0, \dots, c) $ with $ c \in \mathbb{C} $. Hence, $ c_1 = 0 $ and $ |v|^2 = |C|^2 $ in this basis. Since $ c_1 = 0 $, we get $ q = 0, r' = k(\frac{1}{4r} + \frac{1}{4r + 4}) + |C|^2(\frac{1}{4r + 4} - \frac{1}{4r}) = p'$. If, in addition, $|C|^2 = k(2r + 1)$, then $ r' = p' = 0 $ and strict semi-positivity follows. In other words, case \eqref{semidefpossibility1} occurs precisely on the level set defined by the equations $ T = 0$ and $-iD^{0, 1}T^* \wedge D^{1, 0}T = \eta (2r + 1) $. The second case can be characterized in a similar manner. Plugging in a convenient value for $ k $, we get the following example of strict MA-semi-positivity, which we also verify directly.
 	
	\begin{eg}
	\label{exampleOnsurfaces}
	Take $ k = 4, r = 1$, $ B = 2 \cdot \text{Id} $, $ B' = 4$, $ A' = 2 $,
	$
	A =
	\begin{pmatrix}
	1 & 0 & \dots & 0 \\
	0 & 1 & \dots & 0 \\
	\vdots & \vdots & \ddots & \vdots \\
	0 & 0 & \dots & 4
	\end{pmatrix}
	$ and $
	C =
	\begin{pmatrix}
	0 \\
	\vdots \\
	2 \sqrt{3}
	\end{pmatrix}
	$
	\begin{prop}
		For this choice of matrices and parameters, the vbMA equation is satisfied and the resulting matrices are MA-semi-positive but not MA-positive
	\end{prop}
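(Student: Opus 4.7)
The plan is direct verification, and the example has been reverse-engineered to fall into case \eqref{semidefpossibility1} of the preceding Schur-complement analysis, so the arithmetic is essentially dictated. I would begin by checking the algebraic vbMA identities. Since $B = 2\,\mathrm{Id}_n$, equation \eqref{vortexvbMAsurface} reduces to $4A - CC^{*} = 4\,\mathrm{Id}_n$; with $A = \mathrm{diag}(1,\ldots,1,4)$ and $CC^{*}$ the rank-one matrix whose only non-zero entry is $(CC^{*})_{nn} = 12$, this is immediate on inspection. The scalar equation \eqref{vortexvbMAsurface2} reads $2A'B' - |C|^2 = 16 - 12 = 4 = k$.

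Next I would establish MA-semi-positivity by checking that each of the four decoupled forms $\mathfrak{B}_1,\ldots,\mathfrak{B}_4$ is positive semi-definite. The forms $\mathfrak{B}_3$ and $\mathfrak{B}_4$ are manifestly non-negative because the matrices $A, A', B, B'$ occurring in them are all strictly positive. For $\mathfrak{B}_1$ I would invoke the reduction to the matrix $\mathcal{R}$ developed above and plug in $b_1 = 2$, $c_1 = 0$, $|v|^2 = 12$, $B' = 4$, $A' = 2$, $k = 4$: direct computation yields $p' = 1+2-3 = 0$, $q' = 0$, $r' = 1+2-3 = 0$, and $s' = \tfrac{1}{8} > 0$, so $\mathcal{R} = \mathrm{diag}(0,\, s' vv^{*})$, which is positive semi-definite of rank one. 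For $\mathfrak{B}_2$ I would run the parallel Schur reduction (with $A \leftrightarrow B$ and $A' \leftrightarrow B'$ swapped) and verify strict positivity; this follows because the swapped analog of case \eqref{semidefpossibility1} would impose constraints on $B$ and $|C|^2$ incompatible with the current parameters.

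Finally I would exhibit an explicit null direction for $\mathfrak{B}_1$. The kernel of $\mathcal{R}$ contains the $p'$-coordinate together with the vectors in the $(n-1)$-dimensional block orthogonal to $v$; back-substituting through the two nested Schur identifications recovers the saturating values of $\alpha$ and $\delta$, and packaging the resulting non-zero triple $(\alpha, \gamma, \delta)$ as an element of $\Omega^{1,0}(\mathrm{End}\,V)$ witnesses the failure of MA-positivity.

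The main obstacle is bookkeeping rather than any conceptual difficulty: one must keep track of two nested Schur complements, the orthonormal frames that simultaneously diagonalize $B$ and align $C$ with the last basis vector, the re-labelling of variables between $(\alpha_X, \gamma_P, \delta_X)$ and $(\alpha, \gamma, \delta)$, and the genuinely different analysis required for $\mathfrak{B}_2$. The chief risk is pure arithmetic---verifying that the chosen parameters simultaneously solve vbMA, produce $p' = q' = r' = 0$ in the $\mathfrak{B}_1$ Schur reduction, and keep the $\mathfrak{B}_2$ reduction away from its own degenerate locus---rather than any genuine difficulty.
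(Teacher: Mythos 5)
Your proposal is correct in substance but takes a genuinely different route from the paper. The paper, despite having just developed the Schur-complement machinery ($\mathcal{M}\to\mathcal{P}\to\mathcal{R}$) precisely in order to locate this example, verifies the example \emph{directly}: it writes out $\mathfrak{B}_1$ and $\mathfrak{B}_2$ as explicit quadratic forms in the entries $\alpha_{ij},\beta_j,\gamma_j,\delta$ and exhibits them as sums of squares, e.g.\ $\mathfrak{B}_1 = 4\sum_{i<n,j}|\alpha_{i,j}|^2+\sum_j|2\alpha_{n,j}-\sqrt{3}\gamma_j|^2+|2\delta-\sqrt{3}\gamma_n|^2+4|\delta|^2$, which is manifestly semi-definite and visibly degenerate for $n>1$ (the null directions are read off from the unsaturated squares). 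You instead re-enter the Schur reduction and check $p'=q'=r'=0$, $s'=\tfrac18$, so $\mathcal{R}=\operatorname{diag}(0,\,s'vv^*)$ is semi-definite with nontrivial kernel; your arithmetic here is right. What the paper's route buys is a self-contained, two-line verification with the null vectors in plain sight; what yours buys is a check that the example really does sit on the degenerate locus \eqref{semidefpossibility1} predicted by the general analysis, which is more illuminating about \emph{why} the example works.

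Two soft spots to tighten. First, for $\mathfrak{B}_2$ you assert strict positivity because ``the swapped analog of case \eqref{semidefpossibility1} would impose incompatible constraints'' --- but ruling out that one degeneracy case does not establish positive definiteness (the analogue of case \eqref{semidefpossibility2}, or outright failure of semi-positivity, must also be excluded). You need to actually compute the $\mathfrak{B}_2$-analogue of $\mathcal{R}$; the first Schur step now eliminates $\alpha$ against the block built from $A+A^T$-type sums $a_i+a_j$ (diagonal but \emph{not} a multiple of the identity, so the paper's simplification $b_i=b_j=2r$ has no analogue), and one finds $B+B'\operatorname{Id}-\mathcal{Q}_A-\tfrac{CC^*}{2A'}=\operatorname{diag}(\tfrac{18}{5},\dots,\tfrac{18}{5},\tfrac{3}{2})>0$. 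This is routine but it is a computation, not a corollary of the $\mathfrak{B}_1$ analysis. Second, make explicit that the degeneracy of $\mathcal{R}$ requires $n>1$ (for $n=1$ there is no $v$-block and the example does not degenerate); this is consistent with the theorem's hypothesis $r>2$ but should be stated.
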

\begin{proof}
	That these matrices satisfy the vbMA equations \eqref{vortexvbMAsurface}, \eqref{vortexvbMAsurface2} is simple to check. To prove the remaining part of the proposition, we write down
	\begin{align*}
	\mathfrak{B}_1 = 4 \sum_{i, j} |\alpha_{i, j}|^2 + 3|\gamma|^2 + 3 |\gamma_n|^2 + 8 |\delta|^2 - 4 \sqrt{3} \Re(\bar{\delta} \gamma_n ) - 4 \sqrt{3} \Re( \sum_{j} \bar{\alpha_{n, j}} \gamma_j ) \\
	=  4 \sum_{i < n, j} |\alpha_{i, j}|^2 + \sum_j |2 \alpha_{n, j} - \sqrt{3} \gamma_j|^2 + |2 \delta - \sqrt{3} \gamma_n|^2 + 4 |\delta|^2
	\end{align*}
	which is clearly positive semi-definite but not definite for $ n > 1 $. Also, for these matrices,
	\begin{align*}
	\mathfrak{B}_2 = 2 \sum_{i < n, j < n} |\alpha_{i, j}|^2 + 5\sum_{i < n} (|\alpha_{i, n }|^2 + |\alpha_{n, i}|^2) + 8 |\alpha_{n,n}|^2 +  6|\beta|^2 + 4 |\delta|^2 \\
	- 4 \sqrt{3} \Re(\bar{\delta} \beta_n ) - 4 \sqrt{3} \Re( \sum_{j} \bar{\alpha_{j, n}} \beta_j )  \\
	= 2 \sum_{i < n, j < n} |\alpha_{i, j}|^2 + 5\sum_{i < n}  |\alpha_{n, i}|^2 + \sum_{i < n} (5|\alpha_{i, n }|^2 + 6|\beta_i|^2 - 4 \sqrt{3}\Re(\bar{\alpha_{i, n}} \beta_i)) + \\
	(8 |\alpha_{n,n}|^2 +  2|\beta_n|^2 - 4 \sqrt{3}\Re(\bar{\alpha_{n, n}} \beta_n)) + 4(|\beta_n|^2 +  |\delta|^2 -  \sqrt{3} \Re(\bar{\delta} \beta_n ))
	\end{align*}
	which is seen to be positive definite by completing squares. It is also easy to see that $ \mathfrak{B}_3, \mathfrak{B}_4 $ must be positive definite. Hence, we have here an example of matrices satisfying the algebraic vbMA equation and MA-semi-positivity but not MA-positivity.
\end{proof}
\end{eg}

\begin{thm}
	For $ d \geq 2, r \geq 3 $, it is not necessarily true that $ S(d, r) \cap V(d, r, \eta) \subseteq P(d, r) $ (see \ref{sec:MApositivity} for the definitions of these sets).
\end{thm}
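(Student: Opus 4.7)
The strategy is to bootstrap the surface-level counterexample from Example \ref{exampleOnsurfaces} (which, for any $r \geq 3$, supplies $\Theta_0 \in V(2,r,\eta_0) \cap S(2,r) \setminus P(2,r)$) up to any ambient dimension $d \geq 2$ by padding the extra directions with a flat scalar-identity curvature. Viewing $\Theta_0$ as concentrated in the first two coordinates $z^1, z^2$ of $\mathbb{C}^d$, I would define
$$\tilde\Theta := \Theta_0 + \sum_{k=3}^d \omega_k \cdot \text{Id}_r, \qquad \omega_k := i\, dz^k \wedge d\bar z^k,$$
and aim to show $\tilde\Theta \in V(d,r,\tilde\eta) \cap S(d,r) \setminus P(d,r)$ for some positive volume form $\tilde\eta$.

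First I would verify the vbMA equation. Scalar $(1,1)$-forms commute under $\wedge$ with all endomorphism-valued forms, so $(i\tilde\Theta)^d$ expands by the binomial theorem. Using that $(i\Theta_0)^2 = \eta_0 \cdot \text{Id}_r$ is already a top form in the variables $z^1, z^2$ (hence $(i\Theta_0)^m = 0$ for $m \geq 3$) and that $\omega_k^2 = 0$, exactly one term survives and gives $(i\tilde\Theta)^d = \binom{d}{2}(d-2)!\, \eta_0 \wedge \omega_3 \wedge \cdots \wedge \omega_d \cdot \text{Id}_r =: \tilde\eta \cdot \text{Id}_r$, with $\tilde\eta$ a positive volume form.

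Next I would verify MA-semi-positivity together with the failure of MA-positivity. Writing $a = a' + a''$ with $a' = a_1\, dz^1 + a_2\, dz^2$ and $a'' = \sum_{k \geq 3} a_k\, dz^k$, I would expand
$$\Phi_d(a,a) := \sum_{j=0}^{d-1} \Tr\bigl(ia \wedge (i\tilde\Theta)^j \wedge a^* \wedge (i\tilde\Theta)^{d-1-j}\bigr).$$
Any cross term $a_k\, dz^k \wedge a_l^*\, d\bar z^l$ with $k \neq l$ and $\{k,l\} \not\subset \{1,2\}$ requires the $\tilde\Theta$-factors to supply a missing index in $\{3,\dots,d\}$; but the only available source for such an index is some $\omega_m$, which invariably introduces a duplicate $dz^m$ or $d\bar z^m$ and kills the term. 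What remains collapses to a positive multiple of $\omega_3 \wedge \cdots \wedge \omega_d \cdot \Phi_{d=2}(a',a')$ (the surface MA-expression applied to $a'$, non-negative by Example \ref{exampleOnsurfaces}) plus a manifestly non-negative contribution from the diagonal pieces of $a''$. Hence $\tilde\Theta \in S(d,r)$. Taking $a$ to be the trivial extension to $\mathbb{C}^d$ of the nonzero $a'_0$ that realizes the zero of $\Phi_{d=2}$ in Example \ref{exampleOnsurfaces} (so $a''=0$) yields $\Phi_d(a,a) = 0$ with $a \neq 0$, proving $\tilde\Theta \notin P(d,r)$.

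The only mildly delicate step is the cross-term bookkeeping, but the redeeming feature is that each $\omega_k$ only involves the single direction $z^k$: any attempt to use $\omega_k$ to balance an unwanted index inevitably duplicates $dz^k$ or $d\bar z^k$ and forces the term to vanish. Thus the extension is essentially a pure dimension count and introduces no analytic difficulty beyond that already resolved in Example \ref{exampleOnsurfaces}.
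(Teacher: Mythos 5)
Your construction is exactly the paper's: pad the surface counterexample of Example \ref{exampleOnsurfaces} with a scalar identity-valued K\"ahler form in the extra $d-2$ directions, check the algebraic vbMA equation by the binomial expansion, and decouple the MA-form into a tangential piece (your $a'$, the paper's $\mathcal{B}$) and a transverse piece (your $a''$, the paper's $\mathcal{C}$). The vbMA verification and the decoupling of cross terms are fine. But there is a genuine gap at the one step you dismiss as routine: the transverse contribution is \emph{not} ``manifestly non-negative.'' For a component $\mathcal{D}\,du^k$ of $a''$, the relevant quantity is
\begin{equation*}
\Tr\bigl((i\Theta_0)^2\,\mathcal{D}\mathcal{D}^{*}\bigr)+\Tr\bigl(\mathcal{D}\,(i\Theta_0)^2\,\mathcal{D}^{*}\bigr)+\Tr\bigl(i\Theta_0\,\mathcal{D}\wedge i\Theta_0\,\mathcal{D}^{*}\bigr).
\end{equation*}
The first two terms are indeed non-negative because $(i\Theta_0)^2$ is a positive multiple of the identity, but the third term --- where the two curvature factors straddle $\mathcal{D}$ and $\mathcal{D}^{*}$ --- is not controlled by any general principle. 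Since $i\Theta_0$ does not commute with $\mathcal{D}$ and has a nonzero off-diagonal block $N$ built from $C=(0,\dots,2\sqrt{3})^{t}$, this term produces a genuinely negative cross contribution $-2\Re(\delta C^{*}\alpha^{*}C)=-24\Re(\delta\bar{\kappa})$ in the notation of the paper, which must be absorbed by the diagonal terms $16(|\delta|^{2}+|\kappa|^{2})$ via completing the square. That this works depends on the specific numerical values in Example \ref{exampleOnsurfaces} (one needs $16\cdot 16\geq$ the square of half the cross coefficient); it is not a formal consequence of $\Theta_0$ being a strictly MA-semi-positive solution on the surface. Your proof as written omits this verification entirely, and it is precisely the part of the argument that carries the content: a different surface counterexample with the same qualitative features could fail here.

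A secondary, smaller point: even granting the transverse positivity, you should note (as the paper does) that strict semi-positivity of the full form follows because the tangential block already vanishes on the nonzero $a'_0$ extended by zero, so the whole expression vanishes there; you do state this, and that part is correct.
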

\begin{proof}
The above example proves the $ d = 2 $ version of this theorem. For $ m := d - 2 > 0, n \geq 2 $, we use this example to produce strictly MA-semi-positive, $\text{End}(\mathbb{C}^{n+1})$-valued (1, 1)-forms on $ \mathbb{C}^2 \times \mathbb{C}^m $ satisfying an algebraic vbMA equation. Let $ w, z $ be coordinates on $ \mathbb{C}^2 $ as before and let $ u^i, i = 1, \dots, m $ be coordinates on $ \mathbb{C}^m $. Define $ i\Phi = i\Theta + i\Psi $, where $ \displaystyle i \Psi = i(\sum_j du^j \wedge d \bar{u}^j) \cdot \text{Id}_{n+1} $ and $ i \Theta $ is defined by the matrices in the above example:
\begin{equation}
\label{Thetaexp}
	i \Theta = \begin{pmatrix}
	A & 0 \\ 0 & A'
	\end{pmatrix} i dw \wedge d\bar{w} +
	\begin{pmatrix}
	B & 0 \\ 0 & B'
	\end{pmatrix} \omega_{FS} +
	\begin{pmatrix}
	0 & iN \\  -iN^* & 0
	\end{pmatrix}
\end{equation}with $ A, A', B, B', C, k, r $ being as in the example, $ N = C \otimes \frac{dz}{(1 + |z|^2)^2} \cdot  dw \wedge d \bar{z} $, $ N^* = C^* \otimes \frac{\partial}{\partial z} \cdot d\bar{w} \wedge d z  $ and $ \omega_{FS} = \frac{idz \wedge d \bar{z}}{(1 + |z|^2)^2} $ as earlier. We shall show that $ i \Phi $ solves an algebraic vbMA equation and is MA-semi-positive but not MA-positive. \\

Since $ i \Psi  $ commutes with $ i \Theta $, we have $ (i \Phi)^\mu = \sum_{\nu = 0}^{\mu} { \mu \choose \nu } (i \Theta)^\nu (i \Psi)^{\mu - \nu} $ for any positive integer $ \mu $. Also, since $ (i \Theta)^\mu = 0  $ and $ (i \Psi)^\nu = 0 $ for $ \mu > 2, \nu > m $, it follows that
\begin{multline}
	 (i \Phi)^{m + 2} = { m + 2 \choose 2} (i \Theta)^2 (i \Psi)^m \\ = k \frac{(m + 2)!}{2!} \omega_{FS} \wedge i dw \wedge d\bar{w} \wedge i du^1 \wedge d \bar{u}^1 \wedge \dots i\wedge du^m \wedge d \bar{u}^m \cdot \text{Id}_{n+1}
\end{multline} where $ k $ is the constant in the above example. Hence, $ i \Phi $ satisfies the algebraic vbMA equation. It remains to show that $ i \Phi $ is strictly MA-semi-positive. \\

Let $ \mathcal{A} $ be an $ \text{End}(\mathbb{C}^{n+1}) $-valued $ (1, 0) $-form on $ \mathbb{C}^{m + 2} $. We have
\begin{multline*}
	\sum_{\mu = 0}^{m+1} \Tr ( (i \Phi)^{\mu} \wedge i \mathcal{A} \wedge (i \Phi)^{m + 1 - \mu} \wedge \mathcal{A}^{*} ) \\
	= \sum_{\mu = 0}^{m+1} \sum_{\nu = 0}^{\mu} \sum_{\xi = 0}^{m + 1 - \mu} \Tr ( {\mu \choose \nu }(i \Theta)^{\nu} (i \Psi)^{\mu - \nu} \wedge i \mathcal{A} \wedge {m + 1 - \mu \choose \xi } (i \Theta)^{\xi} (i \Psi)^{m + 1 - \mu - \xi} \wedge \mathcal{A}^{*} ) \\
	= \sum_{\substack{0 \leq \nu, \xi \\ \nu + \xi \leq m + 1}} \sum_{\mu = \nu}^{m + 1 - \xi} \Tr ( (i \Psi)^{m + 1 - \nu - \xi} {\mu \choose \nu } {m + 1 - \mu \choose \xi } (i \Theta)^{\nu} \wedge i \mathcal{A} \wedge  (i \Theta)^{\xi} \wedge \mathcal{A}^{*} ) \\
	= \sum_{\substack{0 \leq \nu, \xi \\ \nu + \xi \leq m + 1}} \Tr ( (i \Psi)^{m + 1 - \nu - \xi} {m + 2 \choose \nu + \xi + 1 } (i \Theta)^{\nu} \wedge i \mathcal{A} \wedge  (i \Theta)^{\xi} \wedge \mathcal{A}^{*} ) \\
	= \Tr((i \Psi)^m {m + 2 \choose 2} (i \Theta  \wedge i\mathcal{A} \wedge \mathcal{A}^{*} + i \mathcal{A} \wedge i \Theta \wedge \mathcal{A}^{*}) ) \\ +
	\Tr((i \Psi)^{m - 1} {m + 2 \choose 3} ((i \Theta)^2  \wedge i\mathcal{A} \wedge \mathcal{A}^{*} + i \mathcal{A} \wedge (i \Theta)^2 \wedge \mathcal{A}^{*} + i \Theta \wedge i \mathcal{A} \wedge i \Theta \wedge \mathcal{A}^{*})),
\end{multline*}
where the combinatorial identity used in the fourth line follows from equating the coefficients of $ x^\nu y^\xi $ on the two ends of the identity $ \displaystyle \sum_{\mu = 0}^{m+1}(1 + x)^\mu(1 + y)^{m + 1 - \mu} = \frac{(1 + x)^{m+2} - (1 + y)^{m+2}}{x - y} = \sum_{k = 1}^{m+2} {m + 2 \choose k} \sum_{j = 0}^{k-1} x^{k - 1 - j}y^j$.\\

 Writing $ \mathcal{A} = \mathcal{B} + \mathcal{C} $, where $  \mathcal{B} $ is the component of $ \mathcal{A} $ along $ \mathbb{C}^{2} $ and $ \mathcal{C} $ is the component along $ \mathbb{C}^{m} $,  the above expression can be written as
\begin{multline}
\label{higherdimMApos}
	\sum_{\mu = 0}^{m+1} \Tr ( (i \Phi)^{\mu} \wedge i \mathcal{A} \wedge (i \Phi)^{m + 1 - \mu} \wedge \mathcal{A}^{*} ) \\
	= \Tr((i \Psi)^m {m + 2 \choose 2} (i \Theta  \wedge i\mathcal{B} \wedge \mathcal{B}^{*} + i \mathcal{B} \wedge i \Theta \wedge \mathcal{B}^{*}) ) \\ +
	\Tr((i \Psi)^{m - 1} {m + 2 \choose 3} ((i \Theta)^2  \wedge i\mathcal{C} \wedge \mathcal{C}^{*} + i \mathcal{C} \wedge (i \Theta)^2 \wedge \mathcal{C}^{*} + i \Theta \wedge i \mathcal{C} \wedge i \Theta \wedge \mathcal{C}^{*})).
\end{multline}
Since $ \mathcal{B} $ and $ \mathcal{C} $ are independent of each other, it follows that $ i\Phi $ is MA-(semi-)positive iff each of the two terms in the right-hand-side of \eqref{higherdimMApos} are (semi-)positive for arbitrary $ \mathcal{B} $, $ \mathcal{C} $. As $ i \Theta $ is strictly MA-semi-positive, the first term in \eqref{higherdimMApos} is strictly semi-positive as well and hence $ i \Phi $ can at most be strictly MA-semi-positive. Hence, to show the strict MA-semi-positivity of $ i \Phi $, it suffices to show that the second term is semi-positive. \\

Decomposing $ \mathcal{C} $ further as $ \mathcal{C} = \sum_i \mathcal{D}_i du^i $, with each $ \mathcal{D}_i \in \text{End}(\mathbb{C}^{n+1}) $ and using the fact that $ i \Psi = i \sum_j du^j \wedge d \bar{u}^j \cdot \text{Id}_{n+1} $, we see that it is sufficient to prove that
\begin{equation}
\label{sufficientpos}
	\Tr((i \Theta)^2 \mathcal{D} \mathcal{D}^{*} + \mathcal{D} (i \Theta)^2 \mathcal{D}^{*} + i \Theta \mathcal{D} \wedge i \Theta \mathcal{D}^{*}) \geq 0
\end{equation}for each $ \mathcal{D} \in \text{End}(\mathbb{C}^{n+1}) $. Since $ (i \Theta)^2 $ is a positive multiple of $ \text{Id}_{n+1} $, the first two terms in \eqref{sufficientpos} are positive for $ \mathcal{D} \neq 0 $. Let $ \mathcal{D}  =: \begin{pmatrix}
\alpha & \beta \\ \gamma & \delta
\end{pmatrix} \in \text{End}(\mathbb{C}^{n} \times \mathbb{C}) $. Writing the expression \eqref{Thetaexp} for $ i \Theta $ as $ i \Theta =: i \Theta_w + i \Theta_z + i \Theta_{wz}  $, it follows that
\begin{equation*}
	i \Theta \mathcal{D} \wedge i \Theta \mathcal{D}^{*} = i \Theta_w \mathcal{D} \wedge i \Theta_{z} \mathcal{D}^{*} + i \Theta_z \mathcal{D} \wedge i \Theta_{w} \mathcal{D}^{*} + i \Theta_{wz} \mathcal{D} \wedge i \Theta_{wz} \mathcal{D}^{*}.
\end{equation*}
Now, a straightforward computation gives
\begin{align*}
\Tr(i \Theta \mathcal{D} \wedge i \Theta \mathcal{D}^{*})/(i dw \wedge d\bar{w} \wedge \omega_{FS}) = \Tr(A \alpha B \alpha^*) + \Tr(A \beta B' \beta^*) + A' \gamma B \gamma^* + A'B' |\delta|^2 \\
+ \Tr(B \alpha A \alpha^*) + \Tr(B \beta A' \beta^*) + B' \gamma A \gamma^* + A'B' |\delta|^2 \\
- 2 \Re(\delta C^* \alpha^* C).
\end{align*}

It is easy to see that all terms involving $ \beta, \gamma $ are positive since $ A, B, A', B' $ are all positive definite (for example, $\Tr(A \beta B' \beta^*) = \Tr(\sqrt{A} \beta B' \beta^* \sqrt{A}) \geq 0 $). Since $ C = (0, \dots, 2 \sqrt{3})^t $ has only one non-zero entry, $ 2 \Re(\delta C^* \alpha^* C) = 2 \Re(\delta (2 \sqrt{3})^2 \bar{\kappa} ) $, where $ \kappa $ is defined by $ \alpha =: \begin{pmatrix}
* & * \\ * & \kappa
\end{pmatrix} \in \text{End}(\mathbb{C}^{n-1} \times \mathbb{C}) $. As $ A $ is diagonal and $ B $ is a multiple of the identity matrix $ \text{Id}_n $, it is straightforward to show that $ \displaystyle \Tr(A \alpha B \alpha^*) + 2 A'B' |\delta|^2 + \Tr(B \alpha A \alpha^*) = 16(|\delta|^{2} + |\kappa|^2) $ + positive terms not involving $\kappa$ or $\delta$. Hence,
\begin{multline*}
	\displaystyle \Tr(i \Theta \mathcal{D} \wedge i \Theta \mathcal{D}^{*})/(i dw \wedge d\bar{w} \wedge \omega_{FS}) = 16(|\delta|^{2} + |\kappa|^2) - 24 \Re(\delta \bar{\kappa} ) \\
	 + \text{ (positive terms not involving $\kappa$ or $\delta$)}.
\end{multline*}  By completing squares, it follows that this expression is positive-definite and hence \eqref{sufficientpos} $ \geq 0 $, proving that $ i \Phi $ is strictly MA-semi-positive.
\end{proof}

	\section{\textbf{Vortex bundles over three-folds}}\label{sec:threefolds}
	In this final section, we show that MA-semi-positive solutions of the vbMA equation on a class of rank-2 vortex bundles \cite{Gar94}, \cite{Pin18} over complex three-folds are MA-positive in a restricted sense. Consider a K\"ahler manifold $ X $ of dimension $ n $ and a positive holomorphic line bundle $ L $ over $ X $. Let $ M = X \times  \mathbb{CP}^1  $ and let $ \pi_1 $ and $ \pi_2 $ be the projections onto $ X $ and $  \mathbb{CP}^1  $, respectively.  Let $ r_1 $ and $ r_2 $ be positive integers and define, as in the previous section,
	
	\begin{defn*}
		As a $ C^\infty $ bundle, the vortex bundle $ V $ is defined as $ V = V_1 \oplus V_2 $ where $$ V_1 = (r_1 + 1)\pi_1^{*}L \otimes r_2 \pi_2^{*}\mathcal{O}(2) $$ $$ V_2 = r_1\pi_1^{*}L \otimes (r_2 + 1) \pi_2^{*}\mathcal{O}(2)  .$$
	\end{defn*}
	
	As shown in \cite{Gar94}, all $ SU(2) $-invariant metrics on $ V $ are of the form $ (h_0^{r_1 + 1} e^{-\phi_1} \otimes h_{FS}^{r_2}) \oplus (h_0^{r_1} e^{-\phi_2} \otimes h_{FS}^{(r_2 + 1)}) = h_1 \oplus h_2 $ where $ h_0 $ is some metric on $ L $, $ h_{FS} $ is the Fubini-Study metric as in the previous section and $ \phi_1, \phi_2 $ are smooth real functions on $ X $. \\
	
	 For each such metric, we define an integrable connection on $ V $ as before
	
	$$ D_h =
	\begin{pmatrix}
	D_1 & \beta \\
	-\beta^{*} & D_2
	\end{pmatrix}$$
	
where $ D_i $ is the Chern connection on $ V_i $ for the metric $ h_i $ and
\begin{gather}
 \displaystyle \beta = \pi_1^{*} \phi \otimes \pi_2^{*}\left(\frac{dz}{(1 + |z|^2)^2} \otimes d\bar{z}\right) \nonumber   \\
\in \Omega^{0, 1}(M, \text{Hom}(V_2, V_1)) = \Omega^{0, 1}(M, \text{Hom}(\pi_2^{*}\mathcal{O}(2), \pi_1^{*}L)). \nonumber
\end{gather}
 The adjoint $ \beta^{*}  $ is taken using the induced metric $\text{Hom}(V_2, V_1)$, which in this case is $ h_L = h_0 e^{\phi_2 - \phi_1} $ on $ \pi_1^*L $ and $ h_{FS} $ on $ \pi_2^*\mathcal{O}(2) $. As in the previous section, it can be checked that $ D_{h}^{0, 1} \wedge D_{h}^{0, 1} = 0 $. Hence, this connection defines a holomorphic structure on $ V $ for which it is the Chern connection and this holomorphic structure is also independent of the choice of invariant metric $ h $. \\
	
	The curvature of $ D_h $ is
	$$  \Theta_h =
	\begin{pmatrix}
	\Theta_{h_1} - \beta \wedge \beta^* & D^{1, 0} \beta \\
	-D^{0, 1} \beta^{*} & \Theta_{h_2} - \beta^* \wedge \beta
	\end{pmatrix} $$ $$ =	\begin{pmatrix}
	\Theta_1 - i (2r_2 + |\phi|_L^2 )\omega_{FS} & D^{1, 0} \beta \\
	-D^{0, 1} \beta^{*} & \Theta_2 - i(2r_2 + 2 - |\phi|_L^2) \omega_{FS}
	\end{pmatrix} $$ where $ \Theta_1 = (r_1 + 1) \Theta_{h_0} + \partial \bar{\partial} \phi_1 $ and $ \Theta_2 = r_1 \Theta_{h_0} + \partial \bar{\partial} \phi_2 $ and $|\phi|_L $ refers to the norm of $ \phi $ with respect to the metric $ h_L $ on $ L $. \\
	
	We note down an expression for the $k^{th}$-powers of $ \Theta_h $ using induction on $k$.
	\begin{lem}
		\label{ThetaPowers}
	For $ 1 \leq k \leq n + 1 $
	$$ \Theta_h^{k} =
	\begin{pmatrix}
	\Theta_1^k -iP_{k, 1}(\Theta_1, \Theta_2 ) \wedge \omega_{FS} & Q_k(\Theta_1, \Theta_2) \wedge D^{1, 0} \beta \\
	-Q_k(\Theta_1, \Theta_2) \wedge D^{0, 1} \beta^{*} & \Theta_2^{k} -iP_{k, 2}(\Theta_1, \Theta_2) \wedge \omega_{FS}
	\end{pmatrix} $$where \\
	$ P_{k, 1}(x, y) = kax^{k - 1} + G \sum_{j = 0}^{k-2} (j + 1)x^j y^{k - 2 - j} $, \\
	$ P_{k, 2}(x, y) = kby^{k - 1} + G \sum_{j = 0}^{k-2} (j + 1)y^j x^{k - 2 - j} $, \\
	$ Q_k(x, y) = \sum_{j = 0}^{j = k - 1} x^{j}y^{k - 1 - j} $
	$ a = 2r_2 + |\phi|_L^2 $, $ b = 2r_2 + 2 - |\phi|_L^2 $, \\
	$ G = -D^{1,0} \phi \wedge D^{0,  1} \phi^* $.
	\end{lem}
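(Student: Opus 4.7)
The approach is induction on $k$. For the base case $k=1$, note that $P_{1,1}(x,y) = a$, $P_{1,2}(x,y) = b$, and $Q_1(x,y) = 1$ (the summation defining $P_{k,\cdot}$ is empty when $k=1$), so the claimed block form is exactly the expression for $\Theta_h$ displayed just above the lemma.

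For the inductive step, I would write $\Theta_h^{k+1} = \Theta_h \wedge \Theta_h^k$ as a wedge-product of $2\times 2$ block matrices and expand each entry. The computation rests on four elementary facts: (i) $\omega_{FS}\wedge\omega_{FS} = 0$, since $\omega_{FS}$ is already of top degree on $\mathbb{P}^1$; (ii) $D^{1,0}\beta \wedge \omega_{FS} = \omega_{FS}\wedge D^{1,0}\beta = 0$, and similarly for $D^{0,1}\beta^{*}$, because $D^{1,0}\beta$ (resp.~$D^{0,1}\beta^{*}$) already carries a $d\bar z$ (resp.~$dz$) factor in the $\mathbb{P}^1$ direction; (iii) the key identity $D^{1,0}\beta \wedge D^{0,1}\beta^{*} = iG \wedge \omega_{FS}$, together with the analogous identity for the reversed product, obtained by a direct calculation from the explicit form of $\beta$ and the Fubini--Study normalization; (iv) since $V_1$ and $V_2$ are line bundles, $\Theta_1$ and $\Theta_2$ are scalar $(1,1)$-forms on $X$ that commute past every other factor in the expansion.

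With these in hand, the $(1,2)$-entry collapses to $[\Theta_1 Q_k(\Theta_1,\Theta_2) + \Theta_2^{k}]\wedge D^{1,0}\beta$, which matches $Q_{k+1}(\Theta_1,\Theta_2)\wedge D^{1,0}\beta$ via the polynomial identity $xQ_k(x,y) + y^{k} = Q_{k+1}(x,y)$; the $(2,1)$-entry is handled symmetrically using $yQ_k(x,y) + x^{k} = Q_{k+1}(x,y)$. For the $(1,1)$-entry one obtains
\begin{equation*}
\Theta_1^{k+1} - i\bigl[a\Theta_1^{k} + \Theta_1 P_{k,1}(\Theta_1,\Theta_2) + G\, Q_k(\Theta_1,\Theta_2)\bigr]\wedge\omega_{FS},
\end{equation*}
which agrees with the claimed expression once the polynomial recursion $P_{k+1,1}(x,y) = x P_{k,1}(x,y) + a x^{k} + G\, Q_k(x,y)$ is verified, a routine manipulation of the defining sums. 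The $(2,2)$-entry is handled identically with the roles of $a, b$ and of $\Theta_1, \Theta_2$ exchanged, yielding $P_{k+1,2}(x,y) = y P_{k,2}(x,y) + b y^{k} + G\, Q_k(x,y)$.

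The main obstacle is really just step (iii): pinning down the precise sign and normalization in $D^{1,0}\beta \wedge D^{0,1}\beta^{*} = iG\wedge\omega_{FS}$ from the Fubini--Study conventions. Once this identity is established, the rest reduces to pure block-matrix bookkeeping and short combinatorial checks on the generating polynomials $P_{k,1}, P_{k,2}, Q_k$, and the induction closes mechanically.
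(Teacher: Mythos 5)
Your proposal is correct and takes essentially the same route as the paper, which states only that the lemma is obtained ``using induction on $k$'' and leaves the computation to the reader; your base case, the vanishing facts (i)--(ii), the identity $D^{1,0}\beta\wedge D^{0,1}\beta^{*}=iG\wedge\omega_{FS}$ (consistent with the displayed formula for $\Theta_h^2$), and the recursions $Q_{k+1}=xQ_k+y^{k}$ and $P_{k+1,1}=xP_{k,1}+ax^{k}+GQ_k$ all check out. This is exactly the bookkeeping the paper omits.
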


In particular,
$$ \Theta_h^{n + 1} = \begin{pmatrix}
	  -iP_{n+1, 1}(\Theta_1, \Theta_2 ) \wedge \omega_{FS} & 0 \\
	0 &  -iP_{n + 1, 2}(\Theta_1, \Theta_2) \wedge \omega_{FS}
\end{pmatrix} $$ as $ Q_{n+1} $ is an $ (n, n)$-form on $ X $ and $ D^{1, 0} \beta = (D^{1, 0} \phi) \wedge \frac{dz}{(1 + |z|^2)^2} \otimes d\bar{z} $ (since $ D^{1, 0} \big( \frac{dz}{(1 + |z|^2)^2} \otimes d\bar{z} \big) = 0 $ ). \\
The above lemma holds for any values of $ a, b $ and when the $ \Theta_i $s are any $ (1, 1) $-forms on $ X $, not necessarily equal to the curvature of any line bundle over $ X $. This will be required along the continuity path we use below. \\

For future use, we also note down
$$
 \Theta_h =
 \begin{pmatrix}
 \Theta_{1} - ia \omega_{FS} & D^{1, 0} \beta \\
 -D^{0, 1} \beta^{*} & \Theta_{2} - ib \omega_{FS}
 \end{pmatrix} $$and
$$
\Theta_h^2 =
\begin{pmatrix}
\Theta_{1}^2 - i(2a \Theta_{1} + G) \wedge \omega_{FS} & (\Theta_{1} + \Theta_{2}) \wedge  D^{1, 0} \beta \\
-(\Theta_{1} + \Theta_{2} ) \wedge D^{0, 1} \beta^{*} & \Theta_{2}^2 - i(2b \Theta_{2} + G) \wedge \omega_{FS}
\end{pmatrix}.
$$
	
	Before going on to prove the preservation of (restricted)-MA-positivity for the vortex bundle when $ M $ is a three-fold, we digress to illustrate how the MA-positivity condition can be used in potentially finding solutions to the vbMA equation. We wish to solve the equation
\begin{gather}
  (i \Theta_h)^{n+1}  = \eta \otimes \text{Id},
  \label{vbMAinthevortexcase}
  \end{gather}
where $\eta$ is a given invariant volume form in the cohomology class $\frac{(n + 1)!(2\pi)^{n+1} ch_{n+1}(V)}{2}$ (where $ch_k$ is the $k^{th}$ Chern character class). By Yau's solution of the Calabi conjecture \cite{Yau78} there exists a form $\Omega=\omega_X+\frac{1}{2n \pi}\omega_{FS} \in  c_1(L)+ \frac{1}{2n\pi}[\omega_{FS}]$ such that $\eta=c_0 \Omega^{n+1}$ where $c_0=\frac{\int \eta}{\int \Omega^{n+1}}$. \\
\indent    To solve Equation \eqref{vbMAinthevortexcase}, consider the continuity path (as in the introduction)
    $$  (i \Theta_{h_t} + t \Omega \otimes \text{Id})^{n + 1} = c_t \Omega^{n + 1} \otimes \text{Id}, $$ where
	$c_t = \int_M \text{Tr} (i \Theta_0 + t \Omega \otimes \text{Id})^{n + 1} / (2 \int_M \Omega^{n + 1}) $ is a polynomial of degree $ n + 1 $ in $ t $. That is, we consider the $\mathbb{R}$-vector bundle $V\otimes L^{t/{2\pi}} \otimes \mathcal{O}\left(\frac{t}{2n\pi} \right)$ and solve the vbMA equation for it with a normalised right-hand-side.
	\\	
	For $ t \geq 0  $ and $ h $ an invariant metric, let us define $ F(t, h) := ( i \Theta_h + t \Omega \otimes \text{Id})^{n + 1} - c_t \Omega^{n + 1} \otimes \text{Id} $. Let $ I $ denote the set of all $ t \geq 0 $ such that we have a solution $ h_t $ with $ F(t, h_t)  = 0 $ and $ i \Theta_h + t \Omega \otimes \text{Id} $ satisfying MA-positivity
	\\
	
	\begin{prop}
			I is non-empty and open
	\end{prop}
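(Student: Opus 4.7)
The plan is to handle non-emptiness and openness separately. Non-emptiness will follow from a perturbation argument at large $t$, where the continuity path becomes a small perturbation of a trivially solvable equation; openness will follow from the implicit function theorem applied to $F(t, h)$, using the MA-positivity hypothesis to supply the invertibility of the linearization.

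\textbf{Non-emptiness.} Rescaling by $\tilde{\Xi} := (i\Theta_h + t\Omega \otimes \text{Id})/t$ and setting $s := 1/t$, the equation $F(t, h) = 0$ becomes $\tilde{\Xi}^{n+1} = (c_t/t^{n+1}) \Omega^{n+1} \otimes \text{Id}$, which as $s \to 0$ degenerates to the tautological identity $(\Omega \otimes \text{Id})^{n+1} = \Omega^{n+1} \otimes \text{Id}$. Following the sketch in \cite{Pin18}, this is a perturbation of Yau's complex Monge--Amp\`ere equation for the K\"ahler form $\Omega$; the implicit function theorem applied at $s = 0$ yields a smooth $SU(2)$-invariant solution $h_t$ for all sufficiently large $t$. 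For such $t$, the curvature $i\Theta_{h_t} + t\Omega \otimes \text{Id}$ is close to $t\Omega \otimes \text{Id}$ and hence MA-positive, since $\Omega$ is K\"ahler. Thus $I$ is non-empty.

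\textbf{Openness.} Fix $t_0 \in I$ with solution $h_{t_0}$ and write $\Xi := i\Theta_{h_{t_0}} + t_0\Omega \otimes \text{Id}$. The linearization of $F$ in $h$ at $(t_0, h_{t_0})$ is
$$L(g) = \sum_{k = 0}^{n} \Xi^{k} \wedge i \nabla^{0,1} \nabla^{1, 0}_{h_{t_0}} g \wedge \Xi^{n - k}.$$
Because $\Xi^{n+1} = c_{t_0} \Omega^{n+1} \otimes \text{Id}$ is a scalar multiple of the identity, the calculation in the Proposition of Section 2 applies verbatim with $i\Theta_{h_0}$ replaced by $\Xi$, showing that $L$ preserves $h_{t_0}$-Hermitian endomorphisms and is formally self-adjoint. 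Restricted to $SU(2)$-invariant Hermitian endomorphisms, restricted-MA-positivity of $\Xi$ provides ellipticity of $L$ (as in the ellipticity remark after Proposition 1); integration by parts identifies $\text{ker}(L)$ with invariant $h_{t_0}$-parallel endomorphisms, and indecomposability of the vortex bundle $V$ reduces this kernel to $\mathbb{C} \cdot \text{Id}$. Fredholm theory then makes $L$ an isomorphism on the $L^2$-orthogonal complement of $\mathbb{C} \cdot \text{Id}$ within the invariant Hermitian endomorphisms, with a corresponding Hölder-space statement.

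The implicit function theorem, applied on the trace-normalized space of invariant Hermitian endomorphisms (so as to absorb the $c_t$-ambiguity), then produces a smooth family $t \mapsto h_t$ of invariant solutions of $F(t, h_t) = 0$ for $t$ in a neighborhood of $t_0$. Since MA-positivity is an open condition in $C^2$, these $h_t$ remain MA-positive, so a neighborhood of $t_0$ lies in $I$. The main technical obstacle is that at a boundary point one only has \emph{restricted}-MA-positivity rather than full MA-positivity, so ellipticity and the kernel computation for $L$ must be carried out within the invariant subspace; once this is in place, the rest of the Fredholm and IFT machinery is entirely standard.
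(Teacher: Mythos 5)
Your openness argument matches the paper's (the paper simply says openness ``follows from MA-positivity as mentioned earlier with only minor modifications,'' and your elaboration via the linearization, formal self-adjointness, kernel identification and the implicit function theorem on the invariant, trace-normalized endomorphisms is the intended route). The gap is in the non-emptiness part. Your rescaling $\tilde{\Xi} = (i\Theta_h + t\Omega\otimes\mathrm{Id})/t$, $s = 1/t$, degenerates at $s=0$ to the identity $(\Omega\otimes\mathrm{Id})^{n+1} = \Omega^{n+1}\otimes\mathrm{Id}$, which, as you yourself note, is tautological: it is satisfied by \emph{every} metric $h$, so the linearization in $h$ of your rescaled map vanishes at $s=0$ and the implicit function theorem cannot be applied there. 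This is precisely why the paper uses the further-rescaled map $G(s,H) = s^{n}F(1/s,H)$ (note the extra division by $s$ in the paper's formula), whose value at $s=0$ is the first non-trivial order, namely $G(0,H) = i\,n\,\Theta_h\wedge\Omega^{n} - d\,\Omega^{n+1}\otimes\mathrm{Id}$. This is the Hermitian--Einstein equation for the bundle $V$, \emph{not} a perturbation of Yau's scalar Monge--Amp\`ere equation as you assert ($\Omega$ itself is produced by Yau's theorem beforehand, but the limiting equation along the path is Hermitian--Einstein).

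Consequently you have skipped the substantive step: the solvability of the $s=0$ equation is not automatic and requires verifying that the vortex bundle $V$ is Mumford-stable. The paper does this by averaging to reduce to invariant metrics, invoking Theorem 4 of Garc\'ia-Prada to reduce the stability check to invariant coherent subsheaves, twisting by $L^{-r_1}\otimes\mathcal{O}(-2r_2)$ to land in the standard vortex setting, and applying Theorem 13 of Garc\'ia-Prada (essentially testing against the invariant subbundle $V_1$). One then also needs invertibility of the linearization of $G$ at $s=0$ (the Hermitian--Einstein linearization on an indecomposable stable bundle) to run the implicit function theorem from $s=0$ to small $s>0$. Your final observation, that solutions for small $s$ inherit MA-positivity from the trivially MA-positive $s=0$ configuration by continuity, is correct and agrees with the paper, but it only becomes available once the stability and invertibility steps above are supplied.
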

	
	We sketch a proof of this proposition. Openness follows from MA-positivity as mentioned earlier with only minor modifications. To show that $ I $ is non-empty we prove that $ I $ contains $ t $ for all $ t $ large enough. This can be done by considering the map $$ G(s, H) := s^n F(1/s, H) = (( i s \Theta_h + \Omega \otimes \text{Id})^{n + 1} - s^{n + 1} c_{1/s} \Omega^{n + 1} \otimes \text{Id})/s. $$ $ G $ is smooth for $ s \geq 0 $ and $ G(0, H) = i n\Theta_h \wedge \Omega^{n} - d \cdot \Omega^{n + 1} \otimes Id $ where $ d =\frac{\int 2\pi c_1(V)n \Omega^n}{2\int \Omega^{n+1}}$. \\
\indent The equation $G(0,H)=0$ is the Hermitian-Einstein equation. If it has a solution, then the corresponding metric can be chosen to be invariant (by averaging). Theorem 4 of \cite{Gar93} shows that it is enough to test stability against invariant coherent subsheaves with torsion-free quotients. Since stability is unchanged under tensoring with a line bundle, by considering $V\otimes L^{-r_1} \otimes \mathcal{O}(-2r_2)$ we can reduce to the case of the standard vortex bundle considered in \cite{Gar93} and apply Theorem 13 of \cite{Gar93} to conclude that indeed the bundle $V$ is stable (essentially, it is enough to test stability for the invariant subbundle $V_1$). \\
\indent Furthermore, the linearisation of the map $G(s,H)$ can be shown to be invertible at $s=0$ and hence the implicit function theorem can be used to obtain solutions for all $ s $ near zero. Since $ i s \Theta_{h_s} + \Omega \otimes \text{Id} $ is $ MA $-positive at $ s = 0 $ and $ h_s $ depends smoothly on $ s $, it follows that the solutions nearby are MA-positive as well. \qed \\
	
	As noted earlier, due to the symmetry of the metrics we are using here, we can restrict the use of the MA-positivity condition to a proper subspace of endomorphism-valued $ (1, 0) $-forms. Specifically, we consider a proper subspace ($W$ in the notation of Section \ref{sec:MApositivity}) which contains all $ D^{1, 0}g $ where $ g \in \text{End}(V_1, V_2) $ is diagonal as above. These are seen to be of the form $$ A =
	\begin{pmatrix}
	\eta & 0 \\
	\zeta & \xi
	\end{pmatrix}
	$$where $ \xi, \eta $ are $ (1, 0) $-forms on $ X $ and $ \zeta $ is a $ (1, 0) $-form on $ \mathbb{CP}^1 $ taking values in $ \text{End}(V_1, V_2) $, which we take as the restrictions defining the subspace $ W $.  With this restriction, we have the following result.
	\\
	
	\begin{thm}
		When $ \text{dim}(M) = 3 $, restricted MA-positivity is preserved along a continuity path
	\end{thm}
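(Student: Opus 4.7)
By the openness discussion just above, preservation of restricted MA-positivity along the continuity path reduces to the pointwise claim that, if $i\Phi_t := i\Theta_{h_t} + t\Omega \otimes \text{Id}$ solves $(i\Phi_t)^3 = c_t\Omega^3 \otimes \text{Id}$ at a point $q \in M$ and is restricted-MA-semi-positive there, then it is in fact restricted-MA-positive at $q$. I would therefore fix $q$ and parametrize a test $A \in W \setminus \{0\}$ as $A = \begin{pmatrix} \eta & 0 \\ \zeta & \xi \end{pmatrix}$, with $\eta, \xi \in \Lambda^{1,0}X$ scalar and $\zeta \in \Lambda^{1,0}\mathbb{CP}^1 \otimes \text{Hom}(V_1,V_2)$ at $q$.

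Using Lemma \ref{ThetaPowers} (with $\Theta_i$ replaced by $\Theta_i + t\omega_X$ and the scalars $a,b$ accordingly shifted by the $\omega_{FS}$-contribution to $t\Omega$), I would expand $\mathfrak{B}(A,A) := \sum_{k=0}^{2}\Tr\bigl(iA \wedge (i\Phi_t)^k \wedge A^* \wedge (i\Phi_t)^{2-k}\bigr)$ as a Hermitian form in $(\eta,\xi,\zeta)$. Bidegree and form-type constraints on the product $X \times \mathbb{CP}^1$ kill most cross-terms, and the coupling between the $X$-variables $(\eta,\xi)$ and the $\mathbb{CP}^1$-variable $\zeta$ comes solely from $D^{1,0}\beta = D^{1,0}\phi \wedge \tfrac{dz}{(1+|z|^2)^2} \wedge d\bar z$. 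The vbMA equation further supplies two scalar identities $P_{3,i}(\Theta_1+t\omega_X, \Theta_2+t\omega_X) \wedge \omega_{FS} = c_t\Omega^3$ for $i=1,2$, which I would use to substitute for the highest-order wedges appearing in $\mathfrak{B}$.

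I would then arrange $\mathfrak{B}$ as a block Hermitian matrix in the spirit of Section \ref{sec:rank2surfaces} and apply the Schur complement against the $\zeta$-block. That block is essentially diagonal, and its strict positivity follows from $c_t > 0$ together with the two scalar vbMA identities above and the restricted MA-semi-positivity hypothesis itself, in the same spirit as Lemma \ref{lem:positivityofanticommutator}. After Schur reduction, what remains is a $2 \times 2$ block Hermitian form in $(\eta,\xi)$ whose entries involve $\Theta_1+t\omega_X$, $\Theta_2+t\omega_X$, the Hermitian $(1,1)$-form $G = -D^{1,0}\phi \wedge D^{0,1}\phi^*$, and a correction subtracted through $D^{1,0}\phi$.

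The main obstacle is verifying strict positivity of this residual form. I expect the argument to parallel the squares-completion step at the end of Section \ref{sec:rank2surfaces}: after substituting the vbMA identities, the residual ought to split as a sum of Hermitian squares in $(\eta,\xi)$ (yielding nonnegativity from the hypothesis) plus a strictly positive piece inherited from the $c_t\Omega^3$ on the right-hand side of the vbMA equation. The restrictedness of $W$ --- in particular that $\eta, \xi$ are confined to the diagonal slot while $\zeta$ lives only in the off-diagonal one --- eliminates the degenerate configurations that led to the counterexamples of Section \ref{sec:rankthree}, so strict positivity for every nonzero $A \in W$ would follow from a finite case analysis on which Hermitian squares can simultaneously vanish.
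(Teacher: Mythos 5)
Your setup matches the paper's: parametrize $A=\begin{pmatrix}\eta&0\\ \zeta&\xi\end{pmatrix}$, expand the restricted MA-form as a Hermitian form in $(\eta,\xi,\zeta)$, observe that $\zeta$ enters only through a scalar $k$ whose quadratic coefficient is positive, and Schur-reduce against that block to obtain a $4\times 4$ Hermitian matrix $X=\Delta M-ww^{*}$ in $(\eta_1,\eta_2,\xi_1,\xi_2)$ whose positive definiteness is the statement to be proved. But the step you describe as "the main obstacle" is precisely where your proposal has a genuine gap, and the mechanism you propose for closing it --- a decomposition into Hermitian squares plus a strictly positive remainder, settled by a finite case analysis of simultaneous vanishing --- does not work here. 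The reason is that the inequality is \emph{sharp}: in the paper's normalization ($\widetilde{\Theta_1}=\mathrm{Id}$, $\widetilde{\Theta_2}=\mathrm{diag}(\lambda_1,\lambda_2)$), the quantity $\det(X)/(2\Delta^3)$ is written as $c_1g_1/(a\lambda_1\lambda_2)+c_2g_2/(a\lambda_1\lambda_2)$ where $c_1,c_2>0$ are exactly the strict inequalities supplied by the vbMA equation, and the coefficient $g_2$ \emph{vanishes identically} at the corner of the constraint polygon $P$ where the two vbMA constraint lines $\lambda_1|\ell_2|^2+\lambda_2|\ell_1|^2=2b/a$ and $|\ell_1|^2+|\ell_2|^2=2a\lambda_1\lambda_2/b$ intersect. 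No sum-of-squares decomposition with a "strictly positive piece inherited from $c_t\Omega^3$" can survive this degeneration; the paper instead uses the a priori semi-positivity of $X$ to reduce to $\det(X)>0$, and then proves $g_1,g_2>0$ on the open region $P$ by observing that they are superharmonic in $(|\ell_1|^2,|\ell_2|^2)$, checking nonnegativity on $\partial P$ corner by corner, and invoking the strong maximum principle, with a separate treatment of $\lambda_1=\lambda_2$ versus $\lambda_1\neq\lambda_2$.

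Two further points. First, the reduction is not purely pointwise as you claim: the paper must first establish $b=2r_2+2-|\phi|_L^2>0$ \emph{globally}, which is done by evaluating the vbMA equation at a maximum point of $|\phi|_L$ (where $G=0$); this global input feeds into the lemma $3a\Theta_1+G>0$, $3b\Theta_2+G>0$, which in turn justifies the diagonalizing coordinates and the positivity of $\Delta$ that your Schur reduction presupposes. Second, your appeal to "the restrictedness of $W$ eliminating the degenerate configurations of Section \ref{sec:rankthree}" is not an argument --- the rank-3 counterexamples show exactly that semi-positivity plus the algebraic vbMA equation is not enough in general, so the burden is to exhibit the specific cancellation that saves the rank-2 vortex case, and that cancellation lives in the identity $\det(X)\propto c_1g_1+c_2g_2$ together with the positivity of the $g_i$ on $P$. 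Without producing that identity (or an equivalent), the proof is not complete.
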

	
	In the remaining part of this section, we prove the above theorem. The vbMA equations when $ n = \text{dim}(X) = 2 $ can be written as
	\begin{gather}
		3a (i\Theta_1)^2 + iG \wedge (2 i\Theta_1 + i\Theta_2) = \mu \omega_X^2 \\
		3b (i\Theta_2)^2 + iG \wedge (2 i\Theta_2 + i\Theta_1) = \mu \omega_X^2
		\label{vbMA3d}
	\end{gather}
for some positive constant $ \mu $ with $ \Theta_1, \Theta_2, G, a, b, \omega_X $ as before. Recall $ a = 2r_2 + |\phi|_L^2 $ and $ b = 2r_2 + 2 - |\phi|_L^2 $\\

We now rewrite the restricted $ MA $-positivity condition in this case. Let $$ A =
\begin{pmatrix}
\eta & 0 \\
\zeta & \xi
\end{pmatrix}
$$be as before with $ \eta, \xi $ (1, 0)-forms on $ X $ and $ \zeta $ an $ \text{End}(V_1, V_2) $-valued $ (1, 0) $-form on $ \mathbb{CP}^{1} $. Defining $ \sigma_{FS} := -i \omega_{FS} $, we have as noted earlier
$$ \Theta =
\begin{pmatrix}
\Theta_1 + a\sigma_{FS} & D^{1, 0} \beta \\
-D^{0, 1} \beta^{*} & \Theta_2 + b \sigma_{FS}
\end{pmatrix} $$ and $$ \Theta^2 =
\begin{pmatrix}
\Theta_1^2 + (2a \Theta_{1} + G) \wedge \sigma_{FS} & (\Theta_{1} + \Theta_{2})\wedge D^{1, 0} \beta \\
-(\Theta_{1} + \Theta_{2})\wedge D^{0, 1} \beta^{*} & \Theta_2^{2} +  (2b \Theta_{2} + G) \wedge \sigma_{FS}
\end{pmatrix}. $$ \\

The restricted MA-positivity condition for $ \Theta $ is $$ \displaystyle \text{Tr}( i A \wedge A^* \wedge ( i \Theta)^2) + \text{Tr}( i A \wedge i\Theta \wedge  A^* \wedge  i \Theta) + \text{Tr}( i A  \wedge ( i \Theta)^2 \wedge A^* ) > 0. $$ We now compute each of these terms. \\

Let $ \gamma $ denote the $ \mathcal{O}(2)$-valued $ (1, 0) $-form $ \frac{\partial }{\partial z} \otimes dz $ on $ \mathbb{CP}^{1} $. As seen before, the adjoint $ \gamma^* $ is $ \frac{dz}{(1 + |z|^{2})^{2}} \otimes d \bar{z} $. Suppose, $ e $ is a local section of $ L $ with $ |e|_L = 1 $. Then $ D^{1, 0} \phi  = e \otimes \ell $ for some (1, 0)-form $ \ell $ on $ X $ and $ D^{0, 1} \phi^{*} = e^* \otimes \bar{\ell} $, where $ e^{*} $ is the dual of $ e $ (which is also the adjoint of $ e $ since $ e $ has unit norm). In terms of $ \gamma $ and $ \ell $, we see that $ D^{1, 0} \beta =  \ell \wedge (e \otimes \gamma^*) $ and  $ D^{0, 1} \beta^* = \bar{\ell} \wedge (\gamma \otimes e^*) $. \\

Since $ \zeta $ is section of $ \text{Hom}(\pi_1^{*}L, \pi_2^*\mathcal{O}(2)) \otimes \pi_2^{*}\mathcal{O}(-2) $, at a given point, it is equal to $ k (\gamma \otimes e^*) $ for some complex number $ k $ and hence $ \zeta^{*} = \bar{k}  (e \otimes \gamma^*) $. So we have $ \zeta \wedge D^{1, 0 }\beta = -k \ell \wedge \sigma_{FS} $ and $ \zeta^{*} \wedge D^{ 0, 1 }\beta^{*} = \bar{k} \bar{\ell} \wedge \sigma_{FS} $. With these preliminary observations, we proceed with the computation of the various traces above. \\

We have
$$
A \wedge A^{*} = \begin{pmatrix}
\eta \wedge \bar{\eta} & \eta \wedge \zeta^* \\
\zeta \wedge \bar{\eta} & \zeta \wedge \zeta^{*} + \xi \wedge \bar{\xi}
\end{pmatrix}
$$so by the expression for $ \Theta^2 $ (we do not need to compute all the entries of $  A \wedge A^{*} \wedge \Theta^2$ because we just need its trace)
\begin{multline*}
	\Tr(A \wedge A^{*} \wedge \Theta^2) = \sigma_{FS} \wedge ((2a \Theta_{1} + G)\wedge \eta \wedge \bar{\eta} - \bar{k} \eta \wedge \bar{\ell} \wedge (\Theta_{1} + \Theta_{2}) \\
	+ k \bar{\eta} \wedge \ell \wedge (\Theta_{1} + \Theta_{2}) + |k|^2 \Theta_{2}^{2} + (2b \Theta_{2} + G) \wedge \xi \wedge \bar{\xi}
	)
	$$,
\end{multline*}where we have used identities of the form $ \eta \wedge \Theta_{1}^{2} = 0 $, $ \zeta \wedge \sigma_{FS} = 0 $, etc. \\

Next,
$$
A \wedge \Theta^{2} = \begin{pmatrix}
\eta \wedge (2a \Theta_{1} + G) \wedge \sigma_{FS} & 0 \\
\zeta \wedge \Theta_{1}^{2} - \xi \wedge D^{0, 1} \beta^* \wedge (\Theta_{1} + \Theta_{2}) & -k \ell \wedge \sigma_{FS} \wedge (\Theta_{1} + \Theta_{2}) + \xi \wedge \sigma_{FS} \wedge (2b\Theta_{2} + G)
\end{pmatrix}
$$so that
\begin{multline*}
	\Tr(A  \wedge \Theta^2 \wedge A^{*}) = \sigma_{FS} \wedge ( |k|^{2} \Theta_{1}^{2} - \bar{k} \xi \wedge \bar{\ell} \wedge (\Theta_{1} + \Theta_{2}) \\ + k \bar{\xi} \wedge \ell \wedge (\Theta_{1} + \Theta_{2})  + \xi \wedge \bar{\xi} \wedge (2b \Theta_{2} + G) + \eta \wedge \bar{\eta} \wedge (2a \Theta_{1} + G)
	)
\end{multline*}

Lastly,
$$
A \wedge \Theta = \begin{pmatrix}
\eta \wedge (\Theta_{1} + a \sigma_{FS}) & \eta \wedge D^{1, 0} \beta \\
\zeta \wedge \Theta_{1} - \xi \wedge D^{0, 1} \beta^* & \zeta \wedge D^{1, 0} \beta + \xi \wedge (\Theta_{2} + b \sigma_{FS})
\end{pmatrix}
$$and
$$
A^* \wedge \Theta = \begin{pmatrix}
\bar{\eta} \wedge (\Theta_{1} + a \sigma_{FS}) - \bar{k} \bar{\ell} \wedge \sigma_{FS} & \bar{\eta} \wedge D^{1, 0} \beta + \zeta^* \wedge \Theta_{2} \\
- \bar{\xi} \wedge D^{0, 1} \beta^* & \bar{\xi} \wedge (\Theta_{2} + b \sigma_{FS})
\end{pmatrix}
$$so
\begin{multline*}
	\Tr(A  \wedge \Theta \wedge A^{*} \wedge \Theta) = \sigma_{FS} \wedge ( \eta \wedge \bar{\eta} \wedge  2a \Theta_{1} - \bar{k} \eta \wedge \bar{\ell} \wedge \Theta_1 \\
	+ G \wedge \eta \wedge \bar{\xi} + k \bar{\eta} \wedge \ell \wedge \Theta_{1} + |k|^{2} \Theta_{1} \Theta_{2} + G \wedge \xi \wedge \bar{\eta} - \bar{k} \xi \wedge \bar{\ell}  \wedge \Theta_{2} \\
	+ k \bar{\xi} \wedge \ell \wedge \Theta_{2} + \xi \wedge \bar{\xi}  \wedge 2b \Theta_{2}
	)
\end{multline*}

Adding these, we see that the restricted MA-positivity condition can be written as
	\begin{multline}
		|k|^2((i\Theta_1)^2 + (i\Theta_2)^2 + i\Theta_1 \wedge  i\Theta_2) \\
		+ (2 i\Theta_1 + i\Theta_2) \wedge (ik \bar{\eta} \wedge \ell + i\bar{k} \bar{\ell} \wedge \eta ) + (2 i\Theta_2 + i\Theta_1) \wedge (ik \bar{\xi} \wedge \ell + i\bar{k} \bar{\ell} \wedge \xi ) \\
		+ 2(3a i\Theta_1 + iG) \wedge i\eta \wedge \bar{\eta} + 2(3b i\Theta_2 + iG) \wedge i\xi \wedge \bar{\xi}  + iG \wedge (i\eta \wedge \bar{\xi} + i\xi \wedge \bar{\eta})\\ > 0
		\label{rMApos}
	\end{multline}for $ k \in \mathbb{C} $, $ \eta, \xi \in T^{1, 0}(X) $ not all zero. Note that by the definition of $ \ell $ above, $ G = - \ell \wedge \bar{\ell} $.  Without loss of generality, $ k $ can be taken to be real as any arguments of $ k $ can be absorbed within $ \eta, \xi $ due to the homogeneity of the left-hand-side in the three variables $ k, \xi, \eta $. \\

	In the following part of this section, for clarity of notation, we omit factors of $ i $ in $ i\Theta_1, i\Theta_{2}, iG $ and the terms of type $ i \eta \wedge \xi $, etc. coming from $ iA \wedge A^* $. 	
	\begin{lem}
		 $ a, b > 0 $ on $ X $.
\end{lem}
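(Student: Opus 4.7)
The bound $a > 0$ is immediate: since $r_2 \geq 1$ is a positive integer, $a = 2r_2 + |\phi|_L^2 \geq 2 > 0$. The substantive content of the lemma is $b > 0$, which I would obtain by a maximum-principle argument applied to the smooth function $|\phi|_L^2$ on the compact manifold $X$. If $\phi \equiv 0$ then $b = 2r_2 + 2 > 0$ automatically, so assume $\phi \not\equiv 0$ and pick $p \in X$ at which $|\phi|_L^2$ attains its maximum.

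The key observation is that $iG$ vanishes at $p$. Since $\phi$ is a holomorphic section of $L$, $\partial |\phi|_L^2 = \langle D^{1,0}\phi, \phi\rangle_{h_L}$; combined with $d|\phi|_L^2(p) = 0$ and $\phi(p) \neq 0$, this forces $D^{1,0}\phi(p) = 0$, and hence $iG(p) = -i\, D^{1,0}\phi \wedge D^{0,1}\phi^{*}|_p = 0$. Substituting into the second vbMA equation of \eqref{vbMA3d} at $p$ reduces it to the pointwise identity
\[
3 b(p)\,(i\Theta_2(p))^2 = \mu\, \omega_X^2.
\]
Since $\mu > 0$ and $\omega_X^2$ is a strictly positive top form, $(i\Theta_2(p))^2 \neq 0$; equivalently, the real $(1,1)$-form $i\Theta_2(p)$ is non-degenerate on $T_p X$.

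Next, specialise the restricted MA-semi-positivity inequality \eqref{rMApos} to $k = 0 = \eta$ and arbitrary $\xi \neq 0$; this yields $3b\, i\Theta_2 + iG \geq 0$ as a $(1,1)$-form on $X$, and at $p$ it reads $b(p)\, i\Theta_2(p) \geq 0$. A short case analysis on the signature of $i\Theta_2(p)$ now closes the argument. If $i\Theta_2(p)$ were indefinite (its two Hermitian eigenvalues of opposite sign), the sign constraint $b(p)\, i\Theta_2(p) \geq 0$ would force $b(p) = 0$, contradicting $3b(p)(i\Theta_2(p))^2 \neq 0$. If $i\Theta_2(p)$ were negative definite, the same inequality would give $b(p) \leq 0$, but then $(i\Theta_2(p))^2$ is a strictly positive top form (the product of two negative eigenvalues is positive), and $b(p)(i\Theta_2(p))^2 \leq 0$ again contradicts the vbMA identity. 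The only surviving case is $i\Theta_2(p)$ positive definite with $b(p) > 0$.

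Since $b = 2r_2 + 2 - |\phi|_L^2$ attains its minimum at the same point $p$ where $|\phi|_L^2$ is maximised, $b \geq b(p) > 0$ throughout $X$. There is no real analytic obstacle here; the crux is the vanishing $iG(p) = 0$, which decouples the two vbMA equations pointwise at $p$ and lets the sign condition from restricted MA-semi-positivity pin down the signature of $i\Theta_2(p)$ and the sign of $b(p)$ simultaneously.
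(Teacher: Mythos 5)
Your proof is correct and follows essentially the same route as the paper: evaluate at the maximum of $|\phi|_L^2$, note $G$ vanishes there so the vbMA equation gives $3b\,\Theta_2^2>0$, and combine with the semi-positivity of $3b\,\Theta_2+G$ to pin down the sign of $b$ at that point. Your version is slightly more careful than the paper's (handling $\phi\equiv 0$ separately, justifying $D^{1,0}\phi(p)=0$, and spelling out the signature case analysis), but the substance is identical.
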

\begin{proof}
		That $ a > 0 $ follows immediately from the expression $ a = 2r_2 + |\phi|_L^2 $. Pick a point $ p $ on the manifold where $ |\phi|_L(p) $ is maximum and hence $ b = 2r_2 + 2 - |\phi|_L^2 $ is minimum. At this point $ D^{1, 0} \phi(p) = 0 $ and hence $ G = -D^{1, 0} \phi \wedge D^{0, 1} \phi^\dagger = 0 $. By the vbMA equation, $ 3b \Theta_2^2 > 0 $ at $ p $, so $ b \neq 0 $ and $ \Theta_2^{2} \neq 0 $ at $ p $. By MA-semi-positivity, $ 3b \Theta_2 \geq 0 $ and hence $ \Theta_2^2 \geq 0 $, so we have $ b > 0 $ at $ p $ and hence on $ X $. \\
	\end{proof}

	\begin{lem}
		$ 3a \Theta_1 + G > 0 $, $ 3b\Theta_2 + G > 0 $.
	\end{lem}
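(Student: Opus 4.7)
The plan is to argue by contradiction, exploiting the rank-one semi-negativity of $iG$ together with the vbMA equation \eqref{vbMA3d}. First, I would collect the basic positivity ingredients. Since $iG = -i\ell \wedge \bar\ell$ for a $(1,0)$-form $\ell$, it is a Hermitian $(1,1)$-form on $X$ of rank at most one with $iG \leq 0$. Substituting $k = 0,\ \xi = 0$ (respectively $k = 0,\ \eta = 0$) into the semi-positive version of \eqref{rMApos} yields $(3a\,i\Theta_1 + iG) \wedge i\eta \wedge \bar\eta \geq 0$ and $(3b\,i\Theta_2 + iG) \wedge i\xi \wedge \bar\xi \geq 0$ for all $(1,0)$-forms $\eta, \xi$. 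Since $\dim_{\mathbb{C}} X = 2$, this is equivalent to $3a\,i\Theta_1 + iG \geq 0$ and $3b\,i\Theta_2 + iG \geq 0$ as Hermitian $(1,1)$-forms on $X$. Combined with $iG \leq 0$ and the preceding lemma $a, b > 0$, this forces $i\Theta_1, i\Theta_2 \geq 0$ on $X$.

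Assume for contradiction that $A := 3a\,i\Theta_1 + iG$ fails to be strictly positive at some point $p \in X$; then $A$ has a nonzero null direction at $p$, and I may choose local holomorphic coordinates around $p$ so that this null direction is $\partial/\partial z^1|_p$, whence the matrix of $A$ at $p$ is $\operatorname{diag}(0, q)$ for some $q \geq 0$. Writing the Hermitian matrix of $i\Theta_1$ at $p$ as $(t_{ij})$, the identity $iG = A - 3a\,i\Theta_1$ pins down $iG(p)$ explicitly, and the rank bound $\det(iG(p)) = 0$ simplifies to the key numerical relation $3a\det(i\Theta_1(p)) = t_{11}\,q$. Using the standard wedge formula for $(1,1)$-forms on a surface, one then computes at $p$ the values $A \wedge i\Theta_1 = t_{11}\,q \cdot \mathrm{vol}$ and $(i\Theta_1)^2 = 2\det(i\Theta_1) \cdot \mathrm{vol}$, so that $iG \wedge i\Theta_1 = A \wedge i\Theta_1 - 3a\,(i\Theta_1)^2 = -t_{11}\,q \cdot \mathrm{vol}$, and consequently
\[
3a\,(i\Theta_1)^2 + 2\, iG \wedge i\Theta_1 \;=\; 0 \quad \text{at } p.
\]
Substituting this into the first equation of \eqref{vbMA3d} evaluated at $p$ forces $iG \wedge i\Theta_2 = \mu\,\omega_X^2 > 0$. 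But $iG \leq 0$ and $i\Theta_2 \geq 0$ imply $iG \wedge i\Theta_2 \leq 0$ (the wedge of a semi-negative and a semi-positive $(1,1)$-form on a complex surface is a semi-negative top form, as can be checked by rank-one spectral decomposition), giving the desired contradiction. The inequality $3b\,i\Theta_2 + iG > 0$ follows by the symmetric argument using the second equation of \eqref{vbMA3d}.

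The main obstacle I foresee is verifying the pointwise cancellation $3a\,(i\Theta_1)^2 + 2\, iG \wedge i\Theta_1 = 0$ at a putative null point of $A$: this requires carefully combining the rank-one negativity of $iG$ with the degeneracy of $A$ through the determinant identity $3a\det(i\Theta_1(p)) = t_{11}\,q$. Once this cancellation is in place, the vbMA equation together with the sign constraints on $iG$ and $i\Theta_2$ closes the argument immediately.
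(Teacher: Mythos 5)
Your proof is correct and follows essentially the same route as the paper: semi-positivity of $3a\,i\Theta_1+iG$ from the restricted MA-semi-positivity condition with $k=\xi=0$, reduction of strict positivity to non-vanishing of the square (your coordinate computation at a putative null point is exactly the verification that $\det A=0$ forces $(3a\,i\Theta_1+iG)^2=3a(3a\Theta_1^2+2G\Theta_1)=0$ there, using $G^2=0$), and then the vbMA equation together with $G\wedge\Theta_2\le 0$ to conclude. The paper states this directly rather than by contradiction, but the ingredients and logic are identical.
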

	\begin{proof}
		 Consider $ 3a \Theta_1 + G $. That this real $(1, 1)$-form is positive semi-definite follows immediately from the MA-semi-positivity condition (taking $ k = 0 $ and $ \xi = 0 $) so it is sufficient to show that $ (3a\Theta_1 + G)^2 = 3a(3a\Theta_1^2 + 2G\Theta_1) > 0 $. This inequality follows directly from the vbMA equation \eqref{vbMA3d}, the fact that $ \Theta_i \geq 0 $, $ a, b > 0$, $G \leq 0 $ and $ G $ having rank 1 (because we then have $ G \Theta_i  \leq 0 $) so we are done. Similarly, it can be shown that $ 3b\Theta_2 + G > 0 $. \\
	\end{proof}

	To simplify the MA-positivity condition, we write $ \widetilde{\Theta_1} = 3a \Theta_1 + G$ and $ \widetilde{\Theta_2} = 3b\Theta_2 + G $. In terms of these newly-defined positive $ (1, 1) $-forms, the vbMA equation becomes
	\begin{gather}
		b\widetilde{\Theta_1}^2 + a G \widetilde{\Theta_2} = 3ab \mu \omega_X^2 \\
		a\widetilde{\Theta_2}^2 + b G \widetilde{\Theta_1} = 3ab \mu \omega_X^2
	\end{gather}
	and the restricted MA-positivity condition becomes
	\begin{multline}
		k^2(b^2 \widetilde{\Theta_1}^2 + a^2 \widetilde{\Theta_2}^2 + ab\widetilde{\Theta_1} \widetilde{\Theta_2} -G(b(2b + a) \widetilde{\Theta_1} + a(2a + b) \widetilde{\Theta_2})) \\
		+ k(2b \widetilde{\Theta_1} + a\widetilde{\Theta_2})( \bar{\eta} \wedge \ell + \bar{\ell} \wedge \eta ) + k(2a\widetilde{\Theta_2} + b\widetilde{\Theta_1})( \bar{\xi} \wedge \ell + \bar{\ell} \wedge \xi ) \\
		+ 2\widetilde{\Theta_1}\eta \wedge \bar{\eta} + 2\widetilde{\Theta_2} \xi \wedge \bar{\xi}  + G(\eta \wedge \bar{\xi} + \xi \wedge \bar{\eta})\\ > 0, \label{3dMApos}
	\end{multline}where we have taken $ k $ to be real and as before not all of $k, \xi, \eta  $ are zero. \\
	
	As this is a point-wise inequality, we choose appropriate coordinates at an arbitrary point $ p $ such that $ \widetilde{\Theta_1} = dz^1 \wedge d\bar{z}^1 + dz^2 \wedge d\bar{z}^2  $, $ \widetilde{\Theta_2} = \lambda_1 dz^1 \wedge d\bar{z}^1 + \lambda_2 dz^2 \wedge d\bar{z}^2 $ with $ \lambda_i > 0 $. Suppose $ \ell = \ell_1 dz^1 + \ell_2 dz^2 $, $ \xi = \xi_1 dz^1 + \xi dz^2 $ and $ \eta = \eta_1 dz^1 + \eta_2 dz^2 $ in these coordinates. The vbMA equation at $ p $ can then be written as
	\begin{gather} 
		2b - a(\lambda_1 |\ell_2|^{2} + \lambda_2 |\ell|^{2}) = c \\
		2a\lambda_1 \lambda_2 - b(|\ell_1|^{2} + |\ell_2|^{2}) = c,
		\label{cooridinatevbMA}
	\end{gather}
	with $ c $ being positive. \\

	By replacing $k$ with $-k$, the MA-positivity condition at $ p $ becomes
	\begin{equation}
		k^2 \Delta + 2k \Re{\langle v, w \rangle} + \langle Mv, v \rangle > 0,
	\end{equation}where $ \Delta $ is the (positive) coefficient of $ k^2 $ in $ \eqref{3dMApos} $ divided by the volume form (omitting factors of $ i $ as usual) $ dz^1 \wedge d\bar{z}^1 \wedge dz^2 \wedge d\bar{z}^2 $,
	$$ M = \begin{bmatrix}
	 2 & 0 & -|\ell_2|^2 & \ell_1 \bar{\ell_2} \\
	 0 & 2 & \ell_2 \bar{\ell_1} & -|\ell_1|^2 \\
	 -|\ell_2|^2 & \ell_1 \bar{\ell_2} & 2 \lambda_2 & 0 \\
	 \ell_2 \bar{\ell_1} & -|\ell_1|^2 & 0 & 2 \lambda_1 \\
	\end{bmatrix}, $$
	$$ v = \begin{bmatrix}
	\eta_1 \\ \eta_2 \\ \xi_1 \\ \xi_2
	\end{bmatrix}, $$
	\begin{equation}
	 \label{greek} w = \begin{bmatrix}
	\alpha \\ \beta \\ \gamma \\ \delta
	\end{bmatrix} := \begin{bmatrix}
	\ell_1(2b + a \lambda_2) \\ \ell_2(2b + a \lambda_1) \\ \ell_1(b + 2a \lambda_2) \\ \ell_2(b + 2a \lambda_1),
	\end{bmatrix}
	\end{equation}and $ \langle , \rangle $ is the usual inner product on $ \mathbb{C}^{4} $. \\
	
	Since $ \Delta > 0 $, $ \Theta $ is MA-positive in the restricted sense iff we have $$ \Delta \langle Mv, v \rangle > |\Re \langle v, w \rangle |^2 $$ for all $ v \neq 0 $. As $ v $ is complex, the complex phase of $ \langle v, w \rangle  $ can be absorbed into $ v $ so that the inequality becomes $\Delta \langle Mv, v \rangle > | \langle v, w \rangle |^2 $ which can be rewritten as the condition that $$ X := \Delta M - w w^* \label{Xpositivity} $$ is positive definite, where $ w^* $ is the conjugate transpose of $ w $. By MA-semi-positivity, we know that $ X $ is already positive semi-definite, so it suffices to show that $ \det(X) > 0 $.
	\\
	
	$ X $ can be written as
	$$ X = \begin{bmatrix}
	A & B \\ C & D
	\end{bmatrix} = \begin{bmatrix}
	2\Delta \text{Id}_2 - w_1 w_1^* & - \Delta ll^* - w_1 w_2^* \\
	-\Delta ll^* - w_2 w_1^* & \Lambda \Delta - w_2 w_2^*
	\end{bmatrix}
	$$where $ w_1 = \begin{bmatrix}
	\alpha \\ \beta
	\end{bmatrix} $, $ w_2 = \begin{bmatrix}
	\gamma \\ \delta
	\end{bmatrix} $ and $ l = \begin{bmatrix}
	\bar{\ell_2} \\ -\bar{\ell_1}
	\end{bmatrix} $ and $ \Lambda = \begin{bmatrix}
	2 \lambda_2 & 0 \\ 0 & 2 \lambda_1
	\end{bmatrix} $.\\
	
	\begin{lem}
	 We have $ \det(A) = 2 \Delta (2 \Delta - |w_1|^{2}) > 0 $, where $ |w_1| $ denotes the Euclidean norm of the column vector $ w_1 $.
	\end{lem}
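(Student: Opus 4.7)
The plan is to compute $\det(A)$ by a simple matrix identity and then verify strict positivity of each of the two resulting factors.

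First, since $A = 2\Delta \, \mathrm{Id}_2 - w_1 w_1^*$ is a rank-one perturbation of a scalar matrix, the matrix determinant lemma (equivalently, a one-line expansion of the $2\times 2$ determinant) immediately gives $\det(A) = (2\Delta)^2 - 2\Delta \, w_1^* w_1 = 2\Delta(2\Delta - |w_1|^2)$, which is the formula in the lemma. The content is therefore the strict positivity of $\Delta$ and of $2\Delta - |w_1|^2$.

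The strict positivity $\Delta > 0$ is straightforward: reading $\Delta$ off the coefficient of $k^2$ in \eqref{3dMApos} (divided by the volume form) gives
\begin{equation*}
\Delta = 2b^2 + 2a^2\lambda_1\lambda_2 + ab(\lambda_1+\lambda_2) + b(2b+a)(|\ell_1|^2+|\ell_2|^2) + a(2a+b)(\lambda_1|\ell_2|^2+\lambda_2|\ell_1|^2),
\end{equation*}
and every summand is non-negative, since $a, b > 0$ by the preceding lemma and $\lambda_1, \lambda_2 > 0$ from positivity of $\widetilde{\Theta_2}$; the leading $2b^2$ then forces strict positivity.

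The strict inequality $2\Delta > |w_1|^2$ is the main point. MA-semi-positivity of $X$ forces its $2\times 2$ principal submatrix $A$ to be positive semi-definite, so at least $\det(A) \geq 0$, i.e.\ $2\Delta \geq |w_1|^2$. To upgrade to strict inequality, the plan is to substitute the coordinate-form vbMA relations \eqref{cooridinatevbMA}, namely $2b = a(\lambda_1|\ell_2|^2+\lambda_2|\ell_1|^2) + c$ and $2a\lambda_1\lambda_2 = b(|\ell_1|^2+|\ell_2|^2) + c$ with $c > 0$, into $2\Delta$, expand $|w_1|^2 = 4b^2(|\ell_1|^2+|\ell_2|^2) + 4ab(\lambda_1|\ell_2|^2+\lambda_2|\ell_1|^2) + a^2(\lambda_1^2|\ell_2|^2+\lambda_2^2|\ell_1|^2)$, and use the elementary identity $\lambda_1^2|\ell_2|^2+\lambda_2^2|\ell_1|^2 = (\lambda_1+\lambda_2)(\lambda_1|\ell_2|^2+\lambda_2|\ell_1|^2) - \lambda_1\lambda_2(|\ell_1|^2+|\ell_2|^2)$ to eliminate the quadratic-in-$\lambda$ combination. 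After regrouping, the strict slack should be supplied by the two copies of $c > 0$ introduced by the vbMA substitutions. The main obstacle is the bookkeeping in this last algebraic step; the necessity of using $c > 0$ reflects the fact that MA-semi-positivity alone permits $\det(A) = 0$, and strict positivity genuinely depends on the nondegeneracy of the right-hand side of the vbMA equation.
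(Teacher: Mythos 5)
Your proposal is correct and follows essentially the same route as the paper: expand $2\Delta-|w_1|^2$ explicitly and extract the strict positivity from the slack $c>0$ in the vbMA relation $2b-a(\lambda_1|\ell_2|^2+\lambda_2|\ell_1|^2)=c$; your identity $\lambda_1^2|\ell_2|^2+\lambda_2^2|\ell_1|^2=(\lambda_1+\lambda_2)(\lambda_1|\ell_2|^2+\lambda_2|\ell_1|^2)-\lambda_1\lambda_2|\ell|^2$ is precisely the estimate the paper uses (as an inequality) to absorb the quadratic-in-$\lambda$ term, and the regrouping indeed yields $2\Delta-|w_1|^2=\bigl(2b+a(\lambda_1+\lambda_2)\bigr)c+(\text{non-negative terms})$. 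The only small imprecision is that both copies of $c$ come from the \emph{first} vbMA relation used twice; the second relation $2a\lambda_1\lambda_2=b|\ell|^2+c$ is not actually needed here.
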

	\begin{proof}
	
	\begin{multline*}
	2 \Delta - |w_1|^2 = 4b^2 + 4a^2(\lambda_1 \lambda_2 + \lambda_2 |\ell_1|^{2} + \lambda_1 |\ell_2|^{2}) + \\
	2ab(\lambda_1 + \lambda_2 + |\ell|^{2} ) -a^2(\lambda_1^{2} |\ell_2|^{2} + \lambda_2^{2} |\ell_1|^{2})	- 2ab(\lambda_1 |\ell_2|^{2} + \lambda_2 |\ell_1|^{2})
	\label{detAfactor}
	\end{multline*}
	By $ \eqref{cooridinatevbMA} $, $  4b^2 - 2ab(\lambda_1 |\ell_2|^{2} + \lambda_2 |\ell_1|^{2}) = 2b(2b - a(\lambda_1 |\ell_2|^{2} + \lambda_2 |\ell_1|^{2})) > 0  $ and $ 2ab(\lambda_1 + \lambda_2) > a^2(\lambda_1 + \lambda_2)(\lambda_1 |\ell_2|^{2} + \lambda_2 |\ell_1|^{2}) > a^2(\lambda_1^{2} |\ell_2|^{2} + \lambda_1^{2} |\ell_1|^{2}) $ so $ \det(A) > 0 $ as required. \\
\end{proof}
	
	Consequently, $ A $ is invertible and $ \det(X) = \det(A) \det(D - CA^{-1}B) $. By a direct computation,
	\begin{equation*}
		D - CA^{-1}B = \frac{\Delta}{2k} (2k\Lambda - 4w_2w_2^{*} - (k|\ell|^{2} + |\langle l, w_1 \rangle |^{2})ll^{*} -2(\langle w_1, l \rangle lw_2^* + \langle l, w_1 \rangle w_2l^*)  )
	\end{equation*}
	where $ k = 2\Delta - |w_1|^{2} $, $ |\ell|^{2} = |\ell_1|^{2} + |\ell_2|^{2} $ and $ \langle l, w_1 \rangle  = w_1^* l $. \\
	
	The computation of the determinant of a matrix of this form can be simplified by considering it as a real $ (1, 1) $-form. In fact, if $ p, q $ are two $ 1, 0 $-forms on $ \mathbb{C}^{2} $ and $ \eta $ is a $ (1, 1) $-form and $ \omega = \eta + ap \wedge \bar{p} + b q \wedge \bar{q} + c p \wedge \bar{q} + \bar{c} q \wedge \bar{p} $ with $ a, b \in \mathbb{R} $ and $ c \in \mathbb{C} $, then $ \omega^{2} = \eta^{2} + 2 \eta (\omega - \eta) + 2(ab - |c|^{2}) p \wedge \bar{p} \wedge q \wedge \bar{q} $. Dividing throughout by a standard volume form gives an expression for the determinant of the matrix corresponding to $\omega  $. Using this, we find
	
	\begin{multline}
		\frac{\det(X)}{2 \Delta^3} =  \color{blue}
		4\lambda_1 \lambda_2 (2 \Delta - |w_1|^2 ) \color{purple}  +
		|\bar{\ell_2}\delta + \bar{\ell_1} \gamma |^2|\ell|^2 \\
		- \lambda_1( \color{purple} 4 |\gamma|^{2} + \color{blue} (2 \Delta - |w_1|^2)|\ell|^2 |\ell_2|^2 + \color{red} |\ell_2|^2|\ell_2 \alpha - \ell_1 \beta|^{2} +  4 \Re((\ell_2 \alpha - \ell_1 \beta)\bar{\ell_2} \bar{\gamma})) \\
		- \lambda_2(\color{purple} 4 |\delta|^{2} + \color{blue} (2 \Delta - |w_1|^2)|\ell|^2 |\ell_1|^2 + \color{red} |\ell_1|^2|\ell_2 \alpha - \ell_1 \beta|^{2} - 4 \Re((\ell_2 \alpha - \ell_1 \beta)\bar{\ell_1} \bar{\delta}))
		\label{detX}
	\end{multline}
	As $ \Delta > 0 $, it is enough to show that the right-hand side of the above equation is positive. \\
	
	We define the following quantities
	\begin{equation}
	c_1 = 2b - a\lambda_1 |\ell_2|^{2} - a \lambda_2 |\ell_1|^{2}
	\end{equation}
	\begin{equation}
	c_2 = 2a \lambda_1 \lambda_2 - b |\ell|^{2}
	\end{equation}
	\begin{equation}
	c_3 = 4 \lambda_1 \lambda_2 - \lambda_1 |\ell_2|^{2}|\ell|^{2} - \lambda_2 |\ell_1|^{2}|\ell|^{2}
	\end{equation}
	and note that
	\begin{equation}
	c_3 = 2\frac{c_2}{a} + \frac{c_1 |\ell|^{2}}{a}
	\end{equation}
	By the vbMA equations \eqref{cooridinatevbMA}, we know that each $ c_i > 0 $, so our strategy will be to write the right-hand-side of $ \eqref{detX} $ in terms of the $ c_i $. \\
	
	The $ \textcolor{blue}{blue} $ terms when grouped together clearly contain a factor of $ c_3 $:
	\begin{equation}
	\label{blue1}
	(4 \lambda_1 \lambda_2 - \lambda_1 |\ell_1|^{2}|\ell|^{2} - \lambda_2 |\ell_2|^{2}|\ell|^{2})(2 \Delta - |w_1|^{2}).
	\end{equation}
	
	The sum of the $ \textcolor{purple}{purple} $ terms is
	\begin{multline}
	\label{purple1}
	-|\delta|^2(4\lambda_2 - |\ell_2|^{2} |\ell|^{2}) - |\gamma|^{2}(4\lambda_1 - |\ell_1|^{2} |\ell|^{2}) \\
	+ 2 |\ell_2|^{2}|\ell_1|^{2}(b^2 + 4a^{2}\lambda_1 \lambda_2 + 2ab(\lambda_1 + \lambda_2))|\ell|^{2},
	\end{multline}where we have used the expressions in $ \eqref{greek} $ for $ \gamma, \delta $ while expanding $ |\bar{\ell_1} \gamma + \bar{\ell_2} \delta|^{2} $. We see that the coefficients of $ |\delta|^{2} $ and $ |\gamma|^{2} $ here can be made proportional to $ c_3 $ by adding and subtracting certain terms. Doing this, we find that \eqref{purple1} is equal to the sum of two parts:
	
	\begin{equation}
	\label{purplesplit1}
	(4 \lambda_1 \lambda_2 - \lambda_1 |\ell_1|^{2}|\ell|^{2} - \lambda_2 |\ell_2|^{2}|\ell|^{2}) \times (- \lambda_1 |\gamma|^{2} - \lambda_2 |\delta|^{2}) \times \frac{1}{\lambda_1 \lambda_2}
	\end{equation}
	and
	\begin{equation}
	\label{purplesplit2}
	2 |\ell_2|^{2}|\ell_1|^{2}(b^2 + 4a^{2}\lambda_1 \lambda_2 + 2ab(\lambda_1 + \lambda_2))|\ell|^{2} - |\delta|^{2} |\ell|^{2} |\ell_1|^{2} \frac{\lambda_2}{\lambda_1} - |\gamma|^{2} |\ell_2|^{2} |\ell|^{2} \frac{\lambda_1}{\lambda_2}
	\end{equation}
	
	From the expressions \eqref{greek} for $ \delta, \gamma $, \eqref{purplesplit2} can be simplified to give
	\begin{equation}
	\label{purple2}
	\frac{-b^{2}(\lambda_1 - \lambda_2)^{2}}{\lambda_1 \lambda_2} |\ell_1|^{2} |\ell_1|^{2} |\ell|^{2}.
	\end{equation}Now, \eqref{purplesplit1} can be added to \eqref{blue1} to get
	\begin{equation}
	\label{bluepurple1sum}
	(4 \lambda_1 \lambda_2 - \lambda_1 |\ell_1|^{2}|\ell|^{2} - \lambda_2 |\ell_2|^{2}|\ell|^{2}) \times (\lambda_1 \lambda_2 (2 \Delta - |w_1|^{2}) - \lambda_1 |\gamma|^{2} - \lambda_2 |\delta|^{2}) \times \frac{1}{\lambda_1 \lambda_2}.
	\end{equation}
	
	Writing $ c_3 $ in terms of $ c_1 $ and $ c_2 $, \eqref{bluepurple1sum} is equal to
	\begin{equation}
	\label{bluepurple1sum2}
	\left( 2\frac{c_2}{a} + \frac{c_1 |\ell|^{2}}{a} \right) \frac{f}{\lambda_1 \lambda_2}
	\end{equation}
	where
	\begin{multline}
	f = 4 \lambda_1 \lambda_2 b^{2} + 4\lambda_1^{2}\lambda_2^{2}a^{2} + 2ab\lambda_1 \lambda_2(\lambda_1 + \lambda_2 - |\ell|^{2} - \lambda_1|\ell_2|^{2} - \lambda_2 |\ell_1|^{2}) \\
	-a^{2}\lambda_1 \lambda_2 (\lambda_1^{2} |\ell_2|^{2} + \lambda_2^2 |\ell_1|^{2}) -b^{2}(\lambda_1 |\ell_1|^{2} + \lambda_2 |\ell_2|^{2}).
	\end{multline}Thus, the sum of the blue and purple terms is equal to the sum of \eqref{bluepurple1sum2} and \eqref{purple2}. \\
	
	To simplify the remaining $ \textcolor{red}{red} $ terms in \eqref{detX}, we use the expressions for $ \alpha, \beta, \gamma, \delta $ to find
	\begin{gather}
	\label{computations1}
		\ell_2 \alpha - \ell_1 \beta = \ell_1 \ell_2 a(\lambda_2 - \lambda_1) \\
		(\ell_2 \alpha - \ell_1 \beta) \bar{\ell_2} \bar{\gamma} = |\ell_1|^{2}|\ell_2|^{2}a(\lambda_2 - \lambda_1)(b + 2a \lambda_2) \\
		(\ell_2 \alpha - \ell_1 \beta) \bar{\ell_1} \bar{\delta} = |\ell_1|^{2}|\ell_2|^{2}a(\lambda_2 - \lambda_1)(b + 2a \lambda_1)
	\end{gather}
	so that the sum of the red terms in $ \eqref{detX} $ is
	\begin{multline}
		-4(\lambda_1 (\ell_2 \alpha - \ell_1 \beta) \bar{\ell_2} \bar{\gamma} -
		  \lambda_2 (\ell_2 \alpha - \ell_1 \beta) \bar{\ell_1} \bar{\delta}) - (\lambda_2|\ell_1|^2 + \lambda_1 |\ell_2|^{2})|\ell_2 \alpha - \ell_1 \beta|^{2}  \\ =
		  |\ell_1|^{2}|\ell_2|^{2}a(\lambda_2 - \lambda_1)^{2}(4b - a (\lambda_1 |\ell_2|^{2} + \lambda_2 |\ell_1|^{2}))		
		  \label{partialresult1}.
	\end{multline}

	This can be added to \eqref{purple2}, resulting in
	\begin{multline}
		|\ell_1|^{2}|\ell_2|^{2}a(\lambda_1 - \lambda_2)^{2}(2b - a\lambda_1 |\ell_2|^{2} - a \lambda_2 |\ell_1|^{2}) \\
		+ |\ell_1|^{2}|\ell_2|^{2}b(\lambda_1 - \lambda_2)^{2}(2a - b \frac{|\ell|^{2}}{\lambda_1 \lambda_2})
		\label{partialresult}.
	\end{multline}
	
	Finally, \ref{bluepurple1sum2} is added to  $ \ref{partialresult} $ to get:
	\begin{multline}
		\frac{\det(X)}{2 \Delta^{3}} = \frac{c_2}{\lambda_1 \lambda_2}\left(2\frac{f}{a} + |\ell_1|^{2}|\ell_2|^{2}b(\lambda_1 - \lambda_2)^{2} \right) + c_1\left(\frac{f |\ell|^{2}}{a \lambda_1\lambda_2} + |\ell_1|^{2}|\ell_2|^{2}a(\lambda_1 - \lambda_2)^{2} \right) \\
		=: \frac{c_2 g_2}{a \lambda_1 \lambda_2} + \frac{c_1 g_1}{a \lambda_1\lambda_2},
		\label{tobeprovedpositive}
	\end{multline} with $ g_i $ being the coefficients of $ c_i $ in the left-hand-side, up to certain positive factors. \\
	
	\begin{lem}
		The coefficients $ g_i $ above are positive.
	\end{lem}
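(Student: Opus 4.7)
The plan is to establish positivity of
\begin{equation*}
g_1 = f|\ell|^2 + a^2\lambda_1\lambda_2|\ell_1|^2|\ell_2|^2(\lambda_1-\lambda_2)^2, \quad g_2 = 2f + ab|\ell_1|^2|\ell_2|^2(\lambda_1-\lambda_2)^2,
\end{equation*}
by systematically exploiting the two positive quantities that the vbMA equations \eqref{cooridinatevbMA} supply, namely $c_1 = 2b - a(\lambda_1|\ell_2|^2 + \lambda_2|\ell_1|^2) > 0$ and $c_2 = 2a\lambda_1\lambda_2 - b|\ell|^2 > 0$, alongside the previously-established positivity of $a$, $b$, $\lambda_1$, $\lambda_2$.

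First, I would reorganise $f$ as a symmetric quadratic form in $(a,b)$, with $b^2$-coefficient $4\lambda_1\lambda_2 - \lambda_1|\ell_1|^2 - \lambda_2|\ell_2|^2$, $a^2\lambda_1\lambda_2$-coefficient $4\lambda_1\lambda_2 - \lambda_1^2|\ell_2|^2 - \lambda_2^2|\ell_1|^2$, and $ab\lambda_1\lambda_2$-coefficient $2(\lambda_1+\lambda_2 - |\ell|^2 - \lambda_1|\ell_2|^2 - \lambda_2|\ell_1|^2)$. Substituting $2b = c_1 + a(\lambda_1|\ell_2|^2+\lambda_2|\ell_1|^2)$ wherever $b^2$ appears and $2a\lambda_1\lambda_2 = c_2 + b|\ell|^2$ wherever $a^2\lambda_1\lambda_2$ appears should produce a representation of $f$ as a nonnegative combination of $c_1^2$, $c_2^2$, $c_1 c_2$, plus an explicit residual polynomial in $a, b, \lambda_i, |\ell_i|$.

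Second, I would merge the $(\lambda_1-\lambda_2)^2$ correction with that residual using identities such as $(\lambda_1|\ell_2|^2+\lambda_2|\ell_1|^2)^2 - 4\lambda_1\lambda_2|\ell_1|^2|\ell_2|^2 = (\lambda_1|\ell_2|^2-\lambda_2|\ell_1|^2)^2$, together with the elementary observation that $\lambda_1^2|\ell_2|^2 + \lambda_2^2|\ell_1|^2 - 2\lambda_1\lambda_2|\ell_1||\ell_2|$ decomposes into squares once the $|\ell_i|$ factors are regrouped. The aim is to show, separately for $g_1$ (where the correction is weighted by $|\ell|^2$) and for $g_2$ (weighted by $2$), that the correction term precisely absorbs the residual, leaving a manifestly nonnegative expression that is in fact strictly positive when $c_1,c_2>0$.

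The principal difficulty is the absence of a sum-of-squares decomposition for $f$ alone: the $ab$ cross-term is sign-indefinite, and when $f$ itself is small, the required positivity of $g_1$ and $g_2$ must come essentially from the $(\lambda_1-\lambda_2)^2$ correction. Furthermore, because that correction is weighted by $|\ell|^2$ in $g_1$ but merely by $2$ in $g_2$, the two cases demand parallel but genuinely distinct decompositions. Arranging all the residual quartic monomials in $a,b,\lambda_i,|\ell_i|$ into a transparently nonnegative combination — and in particular verifying that the combination is strict on the locus where each $c_i$ is nonzero — is the bookkeeping obstacle, and may need to be handled case-by-case according to whether $|\ell_1|$ or $|\ell_2|$ vanishes.
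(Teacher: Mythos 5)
Your reorganisation of $f$ as a quadratic form in $(a,b)$ and your identification of $g_1,g_2$ and of the two constraints $c_1,c_2>0$ are all correct, and the route you propose (an algebraic certificate expressing $g_i$ through $c_1^2,c_2^2,c_1c_2$ plus an absorbed residual) is genuinely different from the paper's argument. The paper instead fixes $a,b,\lambda_1,\lambda_2$, observes that each $g_i$ is superharmonic (indeed harmonic, since the only quadratic term is a multiple of $|\ell_1|^2|\ell_2|^2$) as a function of $(|\ell_1|^2,|\ell_2|^2)$, notes that the vbMA constraints confine this point to a bounded quadrilateral $P$, and then verifies $g_i\geq 0$ on $\partial P$ edge by edge (using concavity along the two sloped edges and an explicit evaluation at the vertex where the lines $c_1=0$ and $c_2=0$ meet), concluding by the strong maximum principle.

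The gap is that the decisive step of your plan is never carried out, and there is concrete evidence that it cannot be carried out in the clean form you describe. First, at the vertex of $\overline{P}$ where $c_1=c_2=0$ (which lies in the closed positive quadrant whenever $a^2\lambda_1\lambda_2^2\leq b^2\leq a^2\lambda_1^2\lambda_2$, say), one computes $f=\frac{2(a^2\lambda_1^2\lambda_2-b^2)(a^2\lambda_1\lambda_2^2-b^2)}{ab}<0$ while the correction equals exactly $-2f$, so $g_2$ vanishes \emph{identically} there. Consequently any decomposition $g_2=\alpha c_1^2+\beta c_2^2+\gamma c_1c_2+R$ forces $R$ to vanish at that vertex as well: the inequality is tight, there is no slack, and ``the correction precisely absorbs the residual'' is itself a sharp multivariable polynomial inequality in $a,b,\lambda_i,|\ell_i|^2$ that you have only named, not proved. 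Second, the substitution scheme you describe ($2b\mapsto c_1+a(\lambda_1|\ell_2|^2+\lambda_2|\ell_1|^2)$ inside $b^2$, and $2a\lambda_1\lambda_2\mapsto c_2+b|\ell|^2$ inside $a^2\lambda_1\lambda_2$) reintroduces pure $a^2$ and $b^2$ monomials, so it is not even clear the rewriting terminates in the asserted form without further, unspecified choices. Third, when $\lambda_1=\lambda_2$ the correction term vanishes identically and one must prove $f>0$ on $P$ outright; your plan, which leans on the correction to rescue the regime where $f$ is small or negative, says nothing about this degenerate case, whereas the paper treats it separately by evaluating the (linear, strictly decreasing) function $f$ at the endpoint $\min\bigl(\tfrac{2b}{a\lambda},\tfrac{2a\lambda^2}{b}\bigr)$. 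Until the residual identity is exhibited explicitly and checked to be nonnegative (with the required strictness on $P$), the proposal does not constitute a proof.
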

	Proving this lemma will complete the proof of the Theorem since
	we know that $ c_i > 0 $, from which it follows that $ \det(X) > 0 $.
	\begin{proof}

	Note that (with $ a, b $ and the $ \lambda_i $s fixed) the $ g_i $s are superharmonic functions of $ |\ell_1|^{2} $ and $ |\ell_2|^{2} $ and
	$ f $ is affine in those same variables. Due to the vbMA equation $ \eqref{cooridinatevbMA} $, $ |\ell_1|^{2}, |\ell_2|^{2} $ are constrained to lie in the bounded region $ P $
	\begin{gather*}
		|\ell_i|^2 \geq 0 \\
		\lambda_1 |\ell_2|^{2} + \lambda_2 |\ell_1|^{2} < \frac{2b}{a} \\
		|\ell_1|^{2} + |\ell_2|^{2} < \frac{2a \lambda_1 \lambda_2}{b}.
	\end{gather*}
	
	We consider the two cases $ \lambda_1 = \lambda_2 $ and $ \lambda_1 \neq \lambda_2 $ separately.
	
	$\textbf{Case 1}: \lambda_1 = \lambda_2 $\\
	In this case, it follows from the expressions for the $ g_i $ that it is enough to show $ f > 0  $ in $ P $. When $ \lambda_1 = \lambda_2 =: \lambda $,
	\begin{multline*}
		f = 4b^2 \lambda^{2} + 4ab\lambda^{3} + 4a^2\lambda^{4} - |\ell|^{2}(2ab\lambda^{2}(1 + \lambda) + a^2 \lambda^{4} + b^2 \lambda)
	\end{multline*} and the inequalities defining $ P $ are
	\begin{equation*}
		0 \leq |\ell|^{2} < \min\left( \frac{2b}{a\lambda}, \frac{2a\lambda^{2}}{b} \right).
	\end{equation*}
	 $ f(|\ell|^{2}) $ is linear and strictly-decreasing in $ |\ell|^{2} $, so if we consider the two sub-cases $ \min\left( \frac{2b}{a\lambda}, \frac{2a\lambda^{2}}{b} \right) = \frac{2b}{a\lambda} $ and $ \min\left( \frac{2b}{a\lambda}, \frac{2a\lambda^{2}}{b} \right) = \frac{2a\lambda^{2}}{b} $, then it is enough to show that $ f\left( \min\left( \frac{2b}{a\lambda}, \frac{2a\lambda^{2}}{b} \right) \right) \geq 0 $ in both cases separately. We first consider the case $ \min\left( \frac{2b}{a\lambda}, \frac{2a\lambda^{2}}{b} \right) = \frac{2b}{a\lambda} $.
	
	 $$ f\left( \frac{2b}{a \lambda} \right) = 2ab \lambda^{3} + 4a^2 \lambda^{4} - 4b^{2} \lambda - \frac{2b^{3}}{a}  = \frac{2}{a}(b + 2a\lambda)(a^2 \lambda^{3} - b^2) .$$ As we are considering the case $ \frac{2b}{a\lambda} \leq \frac{2a\lambda^{2}}{b}  $, we see that $ a^2 \lambda^{3} - b^2 \geq 0 $ so, $ f(\frac{2b}{a \lambda}) \geq 0 $ and $ f(|\ell|^{2}) > 0$ for $ |\ell|^{2} < \frac{2b}{a \lambda} $ so $ f > 0 $ on $ P $ in this case.

    Likewise, in the other case, $f\left(\frac{2a\lambda^2}{b} \right)=\frac{2\lambda^2}{b} (2b+a\lambda)(b^2-a^2\lambda^3)\geq 0$ because $\frac{2a\lambda^2}{b}\leq \frac{2b}{a\lambda}$ in this case, and hence $f>0$ on $P$.\\
	
	 $\textbf{Case 2}: \lambda_1 \neq \lambda_2$\\
	 We will show that $ \displaystyle g_2(|\ell_1|^{2}, |\ell_2|^{2}) = 2f(|\ell_1|^{2}, |\ell_2|^{2}) + ab(\lambda_1 - \lambda_2)^{2} |\ell_1|^{2} |\ell_2|^{2} > 0 $. The proof for $ g_1 $ is entirely similar. As noted before, $ g_2 $ is superharmonic, so by the strong maximum principle, it suffices to show that $ g_2{|}_{\partial P} \geq 0 $ and $ g_2 > 0 $ on parts of the boundary which also lie in $ P $. $ \overline{P} $ is a quadrilateral in the $ |\ell_1|^{2} $, $ |\ell_2|^{2} $-plane whose boundary is made up by the lines $ |\ell_1|^2 = 0 $, $ |\ell_2|^{2} = 0 $ and the two lines of negative slope $ \displaystyle \lambda_1 |\ell_2|^{2} + \lambda_2 |\ell_1|^{2} = \frac{2b}{a} $, $ \displaystyle
	 |\ell_1|^{2} + |\ell_2|^{2} = \frac{2a \lambda_1 \lambda_2}{b} $. \\
	
	 On the $ |\ell_2|^{2} = 0 $ part of the boundary,
	 \begin{multline*}
	 f = 4 \lambda_1 \lambda_2 b^{2} + 4\lambda_1^{2}\lambda_2^{2}a^{2} + 2ab\lambda_1 \lambda_2(\lambda_1 + \lambda_2 - |\ell_1|^{2} - \lambda_2 |\ell_1|^{2}) - a^{2}\lambda_1 \lambda_2^{3} |\ell_1|^{2} -b^{2} \lambda_1 |\ell_1|^{2}
	 \end{multline*}As in the $ \lambda_1 = \lambda_2 $ case, $ f $ is a strictly decreasing linear function in $ |\ell_1|^{2} $. Also, as we are only considering points on $ \partial P $, we must have $ \displaystyle |\ell_1|^{2} \leq \min \left( \frac{2b}{a\lambda_2}, \frac{2a\lambda_1\lambda_2}{b} \right) =: z_0 $, so it is enough to show that $ f(z_0) \geq 0 $ which can be done exactly as in the $ \lambda_1 = \lambda_2 $ case (for instance, when $z_0=\frac{2b}{a\lambda_2}$, $f(z_0)= \frac{2}{a\lambda_2}(2a\lambda_1\lambda_2+b\lambda_2)(\lambda_1 \lambda_2^2 a^2-b^2) \geq 0$), so $ f(z_0) \geq 0 $ and $ f(|\ell_1|^{2}, 0) > 0 $ for all points on the boundary which also lie in $ P $. Similarly, $ f $ restricted to the $ |\ell_1|^{2} = 0 $ part of the boundary is also non-negative and hence $ g_2 $ restricted to these parts of the boundary is also non-negative with $ g_2 > 0$ on parts of the boundary which also lie in $ P $. \\
	
	 When restricted to the $ \displaystyle \lambda_1 |\ell_2|^{2} + \lambda_2 |\ell_1|^{2} = \frac{2b}{a} $ part of $ \partial P $, $ g_2 $ is a concave quadratic function of an affine parameter along this line so the restriction of $ g_2 $ to this part of the boundary attains its minimum on the corners of the boundary. At one of the corners, either $ |\ell_1|^2 = 0 $ or $ |\ell_2|^2 = 0 $ and $ g_2 \geq 0 $ at those points as seen before. The other corner is given by the intersection of $ \displaystyle \lambda_1 |\ell_2|^{2} + \lambda_2 |\ell_1|^{2} = \frac{2b}{a} $ and $ \displaystyle
	 |\ell_1|^{2} + |\ell_2|^{2} = \frac{2a \lambda_1 \lambda_2}{b} $. Since $ \lambda_1 \neq \lambda_2 $, these lines intersect at a unique point
	 \begin{gather*}
	 	|\ell_1|^{2} = \frac{2(b^2 - a^2 \lambda_1^{2} \lambda_2)}{ab(\lambda_2 - \lambda_1)} \\
	 	|\ell_2|^{2} = \frac{2(b^2 - a^2 \lambda_1 \lambda_2^{2})}{ab(\lambda_1 - \lambda_2)}.
	 \end{gather*}At this intersection, $$ ab(\lambda_1 - \lambda_2)^{2} |\ell_1|^{2} |\ell_2|^{2} = \frac{4(b^2 - a^2\lambda_1^{2} \lambda_2)(a^2 \lambda_1 \lambda_2^{2} - b^2)}{ab} $$and $$ f(|\ell_1|^{2}, |\ell_2|^{2})  = \frac{2(a^2\lambda_1^{2} \lambda_2 - b^{2})(a^2 \lambda_1 \lambda_2^{2} - b^2)}{ab} $$ and hence (at this point) $ g_2 = 0 $. So we have $ g_2 \geq 0 $ on the part of the line $ \displaystyle \lambda_1 |\ell_2|^{2} + \lambda_2 |\ell_1|^{2} = \frac{2b}{a}$ which belongs to $ \partial P $. (Note that the line itself does not belong to $ P $. Therefore, we do not require $ g_2 $ to be strictly positive on any part of this line.) Similarly, it can be shown that $ g_2 \geq 0 $ along the $ \displaystyle
	 |\ell_1|^{2} + |\ell_2|^{2} = \frac{2a \lambda_1 \lambda_2}{b} $ part of the boundary as well and hence $ g_2 > 0 $ in the region $ P $ by the strong maximum principle as noted earlier. In the same manner, $ g_1 > 0 $ on $ P $. This completes the proof of case 2 and the lemma.
	\end{proof}

\end{document}